\definecolor{myurlcolor}{rgb}{0.1,0.1,0.8}
\definecolor{mylinkcolor}{rgb}{0.05,0.05,0.4}
\newtheoremstyle{dotless_thm}{}{}{\itshape}{}{\bfseries}{}{ }{}
\newtheoremstyle{dotless_def}{}{}{}{}{\bfseries}{}{ }{}
\newtheoremstyle{dotless_blue}{}{}{\color{blue}}{}{\bfseries}{}{ }{}
\newtheoremstyle{dotless_red}{}{}{\color{red}}{}{\bfseries}{}{ }{}
\theoremstyle{dotless_thm}
\newtheorem{thm}{Theorem}
\newtheorem*{thm*}{Theorem}
\numberwithin{thm}{section}
\newtheorem{cor}[thm]{Corollary}
\newtheorem*{cor*}{Corollary}
\newtheorem{lem}[thm]{Lemma}
\newtheorem*{lem*}{Lemma}
\newtheorem{prop}[thm]{Proposition}
\newtheorem*{prop*}{Proposition}
\newtheorem*{thmMH_normed_gps}{Theorem~\ref{thm:MH_normed_gps}}
\newtheorem*{thmMH_suspension}{Theorem~\ref{thm:MH_suspension}}
\theoremstyle{dotless_def}
\newtheorem{defn}[thm]{Definition}
\newtheorem{eg}[thm]{Example}
\newtheorem{rmk}[thm]{Remark}
\newcommand{\cn}{\mathbf}
\newcommand{\cf}{\mathcal}
\newcommand{\cl}{\mathbf}
\newcommand{\bb}{\mathbb}
\newcommand{\bdiag}{\begin{equation*}\begin{tikzcd}}
\newcommand{\ediag}{\end{tikzcd}\end{equation*}}
\newcommand{\xto}{\xrightarrow}
\newcommand{\st}{\, \big| \,}
\newcommand{\ob}{\mathrm{ob}}
\newcommand{\op}{\mathrm{op}}
\newcommand{\arr}{\mathrm{arr}}
\newcommand{\id}{\mathrm{Id}}
\newcommand{\Ch}{\mathrm{Ch}}
\newcommand{\ost}{\circledast}
\newcommand{\im}{\mathrm{im}}
\newcommand{\diag}{\mathrm{diag}}
\newcommand{\Tot}{\mathrm{Tot}}
\newcommand{\Tor}{\mathrm{Tor}}
\newcommand{\pw}{\mathrm{pw}}
\newcommand{\ab}{\mathbb}
\renewcommand{\th}{\textrm{th}}
\renewcommand{\vec}{\mathbf}
\newsavebox{\OneSingleA}
\newsavebox{\OneSingleB}
\newsavebox{\TwoSinglesA}
\newsavebox{\TwoSinglesB}
\newsavebox{\OnePairA}
\newsavebox{\OnePairB}
\newsavebox{\OnePairC}
\newsavebox{\TwoPairs}
\newsavebox{\OneTriples}
\newsavebox{\TwoTriples}
\newsavebox{\MOneSingleA}
\newsavebox{\MOneSingleB}
\newsavebox{\MTwoSinglesA}
\newsavebox{\MTwoSinglesB}
\newsavebox{\MOnePairA}
\newsavebox{\MOnePairB}
\newsavebox{\MOnePairC}
\newsavebox{\MTwoPairs}
\newsavebox{\MOneTriples}
\newsavebox{\MTwoTriples}
\newsavebox{\MOneDoubleA}
\newsavebox{\MTwoDoubles}
\title{Iterated magnitude homology}
\date{}
\begin{document}

\begin{frontmatter}

\author[1]{Emily Roff}
\ead{emily.roff@ed.ac.uk}
\address[1]{School of Mathematics and Maxwell Institute for Mathematical Sciences,\\University of Edinburgh, James Clark Maxwell Building, Edinburgh EH9 3FD}

\begin{abstract}
Magnitude homology is an invariant of enriched categories which generalizes ordinary categorical homology---the homology of the classifying space of a small category. The classifying space can also be generalized in a different direction: it extends from categories to bicategories as the geometric realization of the geometric nerve. This paper introduces a hybrid of the two ideas: an \emph{iterated magnitude homology} theory for categories with a second- or higher-order enrichment. This encompasses, for example, groups equipped with extra structure such as a partial ordering or a bi-invariant metric. In the case of a strict 2-category, iterated magnitude homology recovers the homology of the classifying space; we investigate its content and behaviour when interpreted for partially ordered groups, normed groups, and strict \(n\)-categories for \(n > 2\).
\end{abstract}
\end{frontmatter}

%\maketitle

\tableofcontents

%%%%%%%%%%%%%%%%%%%%%%%%%%%%%%%%%%%%%%%%%%%%%%%%%%%%%

\section{Introduction}\label{sec:intro}

The homology of a small category---meaning the homology of its classifying space---is a well-established construction which unifies various even more classical theories. Specialized to groups, it recovers group homology; specialized to posets, it recovers the homology of the order complex. The homology of small categories can, in turn, be generalized in two quite different directions. On the one hand, it can be lifted to a higher categorical dimension: the classifying space functor can be extended along the inclusion of categories into bicategories as the geometric realization of the \emph{geometric nerve}. On the other hand, the theory can be enriched. \emph{Magnitude homology} extends the homology of small categories---which are categories enriched in sets---to categories enriched in some more exotic monoidal category \(\cf{V}\).

The scheme for constructing magnitude homology runs as follows. First, one must choose a functor \(\Sigma\) from \(\cf{V}\) to some abelian category \(\ab{A}\). (We will refer to this as a choice of coefficients; conventionally, \(\Sigma\) is thought of as recording information about the `size' of objects in \(\cf{V}\).) The category \(\ab{A}\) should carry a closed monoidal structure, and \(\Sigma\) should be strong symmetric monoidal. There is then a recipe to lift \(\Sigma\) to a functor on the category \(\cf{V}\cn{Cat}\) of \(\cf{V}\)-categories, valued in the category of simplicial objects in \(\ab{A}\); this is the \emph{magnitude nerve} functor. The magnitude homology of a \(\cf{V}\)-category is defined to be the homology of the Moore complex associated to its magnitude nerve. (This terminology is explained and more details provided in Sections \ref{sec:enriched_cats} and \ref{sec:MH}, below.)

Compared to the geometric nerve and the classifying space of a bicategory---which date back to work of Duskin and Street in the 1980s \cite{StreetAlgebra1987}---magnitude homology is a young theory. First defined for graphs by Hepworth and Willerton in 2017 \cite{HepworthWillerton2017}, it was extended shortly thereafter by Leinster and Shulman \cite{LeinsterMagnitude2017v3} to encompass any category with a semicartesian enrichment, meaning that the monoidal unit in \(\cf{V}\) is terminal. Despite its relatively recent emergence, the theory has already shown itself to be productive. In the \(\cn{Set}\)-enriched setting it recovers ordinary categorical homology: if we take \(\Sigma\) to be the free abelian group functor on \(\cn{Set}\), then the magnitude nerve of a small category is the simplicial abelian group freely generated by the ordinary nerve, so the magnitude homology of a small category is the homology of its classifying space. In other contexts, however, it offers something altogether new.

It has proved to be an especially rich invariant of metric spaces, regarded as categories enriched in the poset of nonnegative real numbers. With a suitable choice of coefficient functor, the magnitude homology in low degrees of a metric space captures specifically geometric information about convexity, curvature and the existence and uniqueness of geodesics (e.g.~\cite{AsaoMH2019,GomiGeodesic2019,KanetaYoshinaga2018}). In the context of directed graphs, Asao has established a close connection between magnitude homology and Grigor'yan--Muranov--Yau \emph{path homology}---appearing, respectively, on the first and second pages of a certain spectral sequence---and this relationship has already been exploited to prove new results about path homology and cast light on the homotopy theory of graphs \cite{AsaoMH2022, CaputiCollari2023, CarranzaEtAl2022}. More recently still, Tajima and Yoshinaga have made use of causal posets to define a notion of `magnitude homotopy type' for metric spaces \cite{TajimaYoshinaga2023}.

This grounding in metric geometry and graph theory gives the applications of magnitude homology to date  quite a different flavour from the abstract homotopy-theoretic or higher category-theoretic settings in which one typically encounters the bicategorical classifying space. Yet, of course, some bicategories are also enriched categories: a strict 2-category is precisely a category enriched in \(\cn{Cat}\), the category of small categories. It makes sense, then, to ask whether the classifying space of a strict 2-category can be related to its magnitude homology with respect to some choice of coefficient functor on \(\cn{Cat}\).

To answer that question, it is helpful to know that the classifying space of a strict 2-category has an alternative construction, essentially due to Segal \cite{SegalClassifying1968} and given an especially convenient description by Bullejos and Cegarra in \cite{BullejosCegarra2003}. Instead of forming the geometric nerve of a strict 2-category \(\cl{X}\)---which is a simplicial set---the idea is to build a \emph{bi}simplicial set by iterating the ordinary nerve construction for small categories: applying the nerve functor first locally (to each hom-category in \(\cl{X}\)) and then globally, to the resulting simplicial category. Bullejos and Cegarra prove that the diagonal of this bisimplicial set is naturally homotopy equivalent, after geometric realization, to the geometric nerve (\cite{BullejosCegarra2003}, Theorem 1).

This paper takes the bisimplicial approach as inspiration to construct a modified `magnitude homology' theory for categories with a second- or higher-order enrichment---categories enriched in some category of \(\cf{V}\)-categories---where a coefficient functor on the base \(\cf{V}\) has been chosen. In the framework of magnitude homology, the analogue of Segal's approach is to treat the magnitude nerve itself as a coefficient functor on \(\cf{V}\cn{Cat}\). This produces, for each \(\cf{V}\cn{Cat}\)-enriched category \(\cl{X}\), a bisimplicial object in the abelian category \(\ab{A}\), whose diagonal we call the \emph{iterated magnitude nerve} of \(\cl{X}\). The \emph{iterated magnitude homology} of \(\cl{X}\) is defined to be the homology of the Moore complex associated to its iterated magnitude nerve. (Details are given in \Cref{sec:iterated_MH}.)

By design---and by Bullejos and Cegarra's Theorem 1---the iterated magnitude homology of a strict 2-category coincides with the topological homology of its classifying space; this is our \Cref{thm:MH_Duskin}. Because we have the freedom to vary the base category \(\cf{V}\) and the coefficient functor \(\Sigma\), we also have the freedom to extend the theory to new settings.

As it turns out, higher-order enrichment occurs in nature more often than one might imagine. In particular, various commonly-encountered structures on groups can be described as second-order enrichments in familiar base categories. Any congruence on a group (equivalently, any normal subgroup) gives rise to an enrichment in \(\cn{Cat}\) and thus a second-order enrichment in \(\cn{Set}\). A group is called `partially ordered' not simply when its underlying set is partially ordered, but rather when it has an enrichment in the category \(\cn{Poset}\) of posets and monotone maps, and thus a second-order enrichment in the category of Boolean truth values. And a group equipped with a norm that is constant on conjugacy classes---equivalently, a translation-invariant metric---is precisely one with an enrichment in the category \(\cn{Met}\) of metric spaces and 1-Lipschitz maps, and thus a second-order enrichment in the poset of nonnegative reals. (These examples are explained in detail in \Cref{sec:enriched_groups}.) 

To learn more about a partially ordered group or a group with a conjugation-invariant norm, one could choose to analyse its ordinary group homology, or alternatively to analyse the homology of its order complex or its magnitude homology as a metric space. The first option means neglecting the partial ordering or the metric; the others mean neglecting the group operation. In order to keep both forms of structure in view, one might prefer instead to take the magnitude homology of the group as a one-object \(\cn{Poset}\)- or \(\cn{Met}\)-enriched category---this is where the iterated theory comes in.

We investigate iterated magnitude homology for each of these examples. We begin by considering the special case of a strict 2-group, for which the well-known correspondence with homotopy 2-types makes the analysis particularly straightforward, and then extend that analysis to any \(\cn{Cat}\)-enriched group. Applied to a partially ordered group, we learn that iterated magnitude homology is a rather insensitive invariant, failing to distinguish (at least in its first two groups) between a partial order and the equivalence relation it generates. In the case of a normed group, though, things become more interesting. Here we find that the iterated theory captures both topological and metric features, recovering ordinary group homology and at the same time bearing information about the geometry of the group under the norm.

Finally we take the iteration further, constructing an iterated magnitude homology theory for strict \(n\)-categories for \(n \geq 2\). For a certain special class of examples we are able to describe the homology completely, proving a theorem formally analogous to the suspension theorem of classical algebraic topology.

\paragraph{The structure of the paper}

To fix conventions and notation, and to introduce the key examples, we begin in \Cref{sec:enriched_cats} by reviewing the essential concepts of enriched category theory, before describing in \Cref{sec:MH} the construction of magnitude homology and a few of its properties. \Cref{sec:MH} also contains what is in a sense the central technical fact of the paper. In the bisimplicial construction of the classifying space of a strict 2-category, it is crucial that the nerve functor on \(\cn{Cat}\) is strong symmetric monoidal: this property ensures that taking the nerve of every hom-category produces a simplicial category. Accordingly, before we can imitate that construction we must prove that the magnitude nerve is strong symmetric monoidal; this is \Cref{prop:MB_monoid}.

As a first application of the monoidal structure, we are able to prove---in \Cref{sec:Kunneth}---a completely general Eilenberg--Zilber-type theorem for the magnitude chain complex (\Cref{thm:EZ_magnitude}). From there we can derive two K\"unneth formulae (Theorems \ref{thm:MH_Kunneth} and \ref{thm:MH_Kunneth_metric}) extending those proved for the magnitude homology of undirected graphs by Hepworth and Willerton \cite{HepworthWillerton2017} and for classical metric spaces by Bottinelli and Kaiser \cite{BottinelliKaiser2021}. These results are not essential to the later parts of this paper, but are of independent practical value as they establish, in particular, that a K\"unneth formula holds for the magnitude homology of non-symmetric metric spaces such as directed graphs.

Sections \ref{sec:iterated_MH}--\ref{sec:MH_ncats} contain the main substance of the paper. In \Cref{sec:iterated_MH} we explain the construction of iterated magnitude homology, and the remaining sections are devoted to investigating the information it carries when interpreted for enriched groups and strict \(n\)-categories. Because the concept of an enriched group is not a commonplace one, we spend \Cref{sec:enriched_groups} introducing it and describing our three main classes of examples: strict 2-groups, partially ordered groups, and normed groups. In \Cref{sec:MH_groups} we analyse the iterated magnitude homology in low degrees for each of these, before turning in \Cref{sec:MH_ncats} to study the iterated magnitude homology of strict \(n\)-categories for \(n \geq 2\).

\paragraph{Acknowledgements}

I am grateful to Tom Leinster for valuable conversations during the development of this work, and to Richard Hepworth for his comments on the early draft of this paper which forms part of my PhD thesis.

\paragraph{Declaration of interest}

None.

%%%%%%%%%%%%%%%%%%%%%%%%%%%%%%%%%%%%%%%%%%%%%%%%%%%%%

\section{Enriched categories}\label{sec:enriched_cats}

The basic idea of enriched category theory is to describe category-like structures in which, instead of a {set} of morphisms between every pair of objects, one has an `object of morphisms' drawn from some other category of interest. For this to make sense, as we shall see, it is necessary that the `enriching' category should be \emph{monoidal}. A \textbf{monoidal category} is a category \(\cf{V}\) equipped with an associative binary operation \(\otimes\) on its objects which extends to a functor \(\cf{V}\times\cf{V} \to \cf{V}\), and an object \(I\) in \(\cf{V}\) which is a unit for \(\otimes\). These data must satisfy associativity and unit axioms up to coherent isomorphism; for details, see Chapter VII in \cite{MacLane1997}.

We will only consider monoidal structures that are \textbf{symmetric}, meaning there are isomorphisms \(\gamma_{u,v}: u \otimes v \to v \otimes u\), natural in \(u,v \in \ob(\cf{V})\) and satisfying appropriate coherence conditions. A symmetric monoidal category \((\cf{V}, \otimes, I)\) is said to be \textbf{semicartesian} if \(I\) is the terminal object in \(\cf{V}\), and \textbf{cartesian} if \(\otimes\) is the categorical product in \(\cf{V}\).

\begin{eg}
\begin{itemize}
\item The cartesian product of sets makes the category \(\cn{Set}\) of sets and functions into a cartesian monoidal category. The unit object for this monoidal structure is the one-element set: \(I = \{*\}\).
\item The category of Boolean truth values has two objects, \(T\) and \(F\), and a single non-identity arrow \(F \to T\). We will call this category \(\cn{Truth}\). The operation of conjunction---denoted by \(\wedge\)---is the categorical product here, making \(\cn{Truth}\) into a cartesian monoidal category with unit object \(T\).
\item Any preorder can be regarded as a category in which every hom-set has at most one element. In particular this is true of the poset \([0, \infty]\) of nonnegative real numbers, extended to infinity. The objects of the corresponding category are the elements of \([0, \infty]\), with an arrow \(u \to v\) if and only if \(u \geq v\). This category carries many monoidal structures, but in this paper we consider just one of them: the structure in which \(u \otimes v = u + v\). The unit for \(+\) is \(0\), which is terminal in \([0, \infty]\), so \(([0, \infty], +, 0)\) is semicartesian.
\end{itemize}
\end{eg}

Let \((\cf{V}, \otimes, I)\) be a monoidal category. A (small) \textbf{category enriched in \(\cf{V}\)}, or \textbf{\(\cf{V}\)-category}, consists of a set \(\ob(\cl{X})\) of objects and for each pair \(x,y \in \ob(\cl{X})\) an object \(\cl{X}(x,y)\) in \(\cf{V}\). For each \(x \in \ob(\cl{X})\) there must be a morphism \(\id_x: I \to \cl{X}(x,x)\) in \(\cf{V}\)---thought of as `selecting the identity' on \(x\)---and for each triple \(x,y,z\) there must be a morphism \(\circ: \cl{X}(y,z) \otimes \cl{X}(x,y) \to \cl{X}(x,z)\) in \(\cf{V}\), performing `composition'. These data must satisfy axioms mirroring those in the definition of an ordinary category; details can be found in \cite{KellyBasicConcepts1982}.

\begin{eg}
\begin{itemize}
\item A category enriched in \((\cn{Set}, \times, \{*\})\) is an ordinary small category. For instance, any group can be regarded as a \(\cn{Set}\)-category with a single object, in which every morphism is invertible.
\item A category enriched in \((\cn{Truth}, \wedge, T)\) is a set equipped with a reflexive, transitive relation---in other words, a preorder. In particular, every poset is a \(\cn{Truth}\)-category.
\item A category enriched in \(([0, \infty], +, 0)\) consists of a set \(X\) and, for every pair \(x,y\in X\), a number \(d(x,y) \in [0, \infty]\). The existence of `identity morphisms' here says that for every \(x \in X\) we have \(0 \geq d(x,x)\) and so \(d(x,x) = 0\); `composition' gives us, for every triple \(x,y,z \in X\), the inequality
\[d(y,z) + d(x,y) \geq d(x,z).\]
Thus, as was first observed by Lawvere \cite{LawvereMetric1974}, a category enriched in \(([0, \infty], +, 0)\) is a \textbf{generalized metric space}: one whose metric need not be separated or symmetric, and may take infinite values.

Any directed graph can be made into a generalized metric space by declaring the distance from a vertex \(u\) to a vertex \(v\) to be the length of the shortest directed path from \(u\) to \(v\), or infinity if no such path exists. This metric will always be separated but is typically \emph{not} symmetric nor finite-valued in general. Because we often have such examples in mind, we adopt the nonstandard convention that the term \textbf{metric space} means a generalized metric space whose metric is separated. We will use the term \textbf{classical metric space} to refer to a space whose metric is separated, symmetric and always finite-valued.
\end{itemize}
\end{eg}

We will sometimes refer to the \textbf{underlying (ordinary) category} of a \(\cf{V}\)-category \(\cl{X}\). This is the category \(\cl{X}_0\) whose objects are those of \(\cl{X}\), with hom-sets \(\cl{X}_0(x,y) = \cf{V}(I, \cl{X}(x,y))\). For instance, the underlying ordinary category of a metric space is the discrete category on its set of points.

There is a notion of \textbf{enriched functor} between \(\cf{V}\)-categories, and thus for every monoidal category \((\cf{V}, \otimes, I)\) there is a category of \(\cf{V}\)-categories, which we denote by \(\cf{V}\cn{Cat}\) \cite{KellyBasicConcepts1982}. If \((\cf{V}, \otimes, I)\) is symmetric monoidal, then \(\cf{V}\cn{Cat}\) also carries a symmetric monoidal structure: given two \(\cf{V}\)-categories \(\cl{X}\) and \(\cl{Y}\), their tensor product \(\cl{X} \otimes \cl{Y}\) has object set \(\ob(\cl{X}) \times \ob(\cl{Y})\), with hom objects
\[(\cl{X} \otimes \cl{Y}) ((x,y), (x',y')) = \cl{X}(x,x') \otimes \cl{Y}(y,y').\]
(The symmetry of the monoidal structure in \(\cf{V}\) is used to define composition in \(\cl{X} \otimes \cl{Y}\).) The unit for this monoidal structure is the one-object \(\cf{V}\)-category \(\cl{I}\) whose unique hom-object is the unit \(I\) in \(\cf{V}\).

\begin{eg}
\begin{itemize}
\item The monoidal category of categories enriched in \(\cn{Set}\) is just the ordinary category \(\cn{Cat}\) of small categories, with tensor product the categorical product and unit the terminal category.
\item A \(\cn{Truth}\)-functor between preorders is precisely a monotone map. The tensor product of preorders \(P\) and \(Q\) is the product set \(P \times Q\) equipped with the preorder in which \((p,q) \leq (p',q')\) if and only if \(p \leq p'\) and \(q \leq q'\). We will denote the monoidal category of preorders and monotone maps by \(\cn{PreOrd}\), and its full subcategory of posets by \(\cn{Poset}\).
\item An enriched functor between generalized metric spaces \((X,d_X)\) and \((Y,d_Y)\) is a function \(f: X \to Y\) such that \(d_Y(f(x),f(x')) \leq d_X(x,x')\) for all \(x,x' \in X\); in other words, a 1-Lipschitz map from \(X\) to \(Y\). Hereafter we will refer to these as \textbf{short maps}. The tensor product \(X \otimes Y\) is the product set \(X \times Y\) equipped with the \(\ell^1\) product metric:
\[d_{X \otimes Y}((x,y), (x',y')) = d_X(x,x') + d_Y(y,y').\]
We will denote the monoidal category of generalized metric spaces and short maps by \(\cn{GMet}\), and its full subcategory of metric spaces by \(\cn{Met}\).
\end{itemize}
\end{eg}

Given monoidal categories \((\cf{V}, \otimes, I_\cf{V})\) and \((\cf{W}, \bullet, I_\cf{W})\), a functor \(F: \cf{V} \to \cf{W}\) is \textbf{monoidal} if there are morphisms \(\phi: I_\cf{W} \to F(I_\cf{V})\) and \(\phi_{u,v}: F(u) \bullet F(v) \to F(u \otimes v)\), natural in \(u,v \in \ob(\cf{V})\) and satisfying certain coherence conditions. If these morphisms are isomorphisms, \(F\) is said to be \textbf{strong monoidal}. If the monoidal structures on \(\cf{V}\) and \(\cf{W}\) are symmetric then one can ask for \(F\) to be \textbf{strong symmetric monoidal}, meaning, of course, that it respects the symmetric structure: for all \(u,v \in \cf{V}\) we have \(\phi_{v,u} \circ \gamma_{F(u),F(v)} = F(\gamma_{u,v}) \circ \phi_{v,u}\).

\paragraph{Conventions} Throughout this paper, \((\cf{V}, \otimes, I)\) will always denote a {semicartesian} monoidal category, and \((\ab{A}, \otimes, \bb{I})\) will always denote an abelian category with a symmetric monoidal structure which is \textbf{closed}, meaning that for every object \(C\) in \(\ab{A}\) the functor \(- \otimes C: \ab{A} \to \ab{A}\) has a right adjoint. Since left adjoints preserve colimits, the closed structure on \(\ab{A}\) (along with the symmetry) guarantees that the tensor product preserves arbitrary coproducts in each variable.

We denote the simplex category by \(\Delta\). For any symmetric monoidal closed abelian category \(\ab{A}\), the category \([\Delta^\op, \ab{A}]\) of simplicial objects in \(\ab{A}\) is also abelian, and carries a symmetric monoidal closed structure inherited pointwise from \(\ab{A}\), which we denote by \(([\Delta^\op, \ab{A}], \otimes_\pw, I_{\pw})\).

%%%%%%%%%%%%%%%%%%%%%%%%%%%%%%%%%%%%%%%%%%%%%%%%%%%%%

\section{Magnitude homology}\label{sec:MH}

All definitions in this section are due to Leinster and Shulman \cite{LeinsterMagnitude2017v3} (following Hepworth and Willerton \cite{HepworthWillerton2017}). They define the \emph{magnitude homology} of a category with a semicartesian enrichment to be the homology of the Moore complex associated to a certain simplicial object, the \emph{magnitude nerve}.

\begin{defn}\label{def:mag_nerve}
Let \((\cf{V}, \otimes, I)\) be a {semicartesian} monoidal category, and \((\ab{A}, \otimes, \bb{I})\) a symmetric monoidal closed abelian category. Let \(\Sigma : \cf{V} \to \ab{A}\) be a strong symmetric monoidal functor. The \textbf{magnitude nerve} of  a \(\cf{V}\)-category \(\cl{X}\) with respect to \(\Sigma\) is a simplicial object \(MB^\Sigma(\cl{X})\) in \(\ab{A}\) given for each \(n \in \bb{N}\) by
\begin{equation*}
MB_n^\Sigma(\cl{X}) = \bigoplus_{x_0,\ldots,x_n \in \cl{X}} \Sigma\cl{X}(x_0,x_1) \otimes \cdots \otimes \Sigma\cl{X}(x_{n-1},x_n).
\end{equation*}
In particular, since the empty tensor product is the unit object, we have \(MB_0^\Sigma(\cl{X}) = \bigoplus_{x \in \cl{X}} \mathbb{I}\).
The simplicial structure is defined as follows:
\begin{itemize}
\item For \(1 \leq i \leq n-1\) the face map \(\delta_n^i: MB_n^\Sigma(\cl{X}) \to MB_{n-1}^\Sigma(\cl{X})\) discards the object \(x_i\) from the tuple indexing each summand, using maps provided by the monoidal structure of \(\Sigma\) and composition at the object \(x_i\):
\begin{align*}
\Sigma \cl{X}(x_{i-1},x_i) \otimes \Sigma \cl{X}(x_i, x_{i+1}) & \xto{\cong} \Sigma(\cl{X}(x_{i-1},x_i) \otimes \cl{X}(x_i, x_{i+1})) \\
& \xto{\Sigma(\circ)} \Sigma \cl{X}(x_{i-1}, x_{i+1}).
\end{align*}
The face map \(\delta_n^0\) discards the object \(x_0\) using the monoidal structure of \(\Sigma\) and the terminal map \(\cl{X}(x_0,x_1) \to I\) in \(\cf{V}\), which exists because \(\cf{V}\) is semicartesian. The map \(\delta_n^n\) discards \(x_n\) in the same way.
\item For each \(i\), the degeneracy map \(\sigma_n^i: MB_n^\Sigma(\cl{X}) \to MB_{n+1}^\Sigma(\cl{X})\) is induced by `inserting the identity' on \(x_i\), using the monoidal structure of \(\Sigma\):
\[\bb{I} \xto{\cong} \Sigma I \xto{\Sigma(\id_{x_i})} \Sigma X(x_i,x_i).\]
\end{itemize}
To show that these maps satisfy the simplicial identities makes use of the fact that the monoidal structure of \(\Sigma\) is symmetric and strong.
\end{defn}

From any simplicial object \(B\) in an abelian category one can form two chain complexes. The \emph{unnormalized} chain complex \(C(B)\) has \(C_n(B) = B_n\), with differential given by an alternating sum of face maps: \(\partial_n = \sum_{i=0}^n (-1)^i \delta_i\). The \emph{normalized} chain complex \(N(B)\), also known as the \emph{Moore complex}, is obtained by quotienting \(C(B)\) by a subcomplex `spanned by the degenerate simplices':
\[N(B) = C(B) / D(B)\]
where \(D_n(B) = \sum_i \sigma_i (C_{n-1}(B))\). The complex \(D(B)\) is always acyclic, so the unnormalized and normalized complexes are naturally quasi-isomorphic:
\[
H_\bullet(N(B)) \cong H_\bullet(C(B)).
\]
This is proved, for example, as Theorem 8.3.8 in Weibel \cite{WeibelIntroduction1994}.

\begin{defn}\label{def:MC_MH}
Let \(\cf{V}\), \(\ab{A}\) and \(\Sigma\) be as in \Cref{def:mag_nerve}. The \textbf{unnormalized magnitude complex} of a \(\cf{V}\)-category \(\cl{X}\) with respect to \(\Sigma\) is
\[\widetilde{MC}_\bullet^\Sigma(\cl{X}) = C(MB^\Sigma(\cl{X}))\]
and the \textbf{normalized magnitude complex} is
\[MC_\bullet^\Sigma(\cl{X}) = N(MB^\Sigma(\cl{X})).\]
The \textbf{magnitude homology} of \(\cl{X}\) with respect to \(\Sigma\) is
\[MH_\bullet^\Sigma(\cl{X}) = H_\bullet(MC^\Sigma(\cl{X})) \cong H_\bullet(\widetilde{MC}^\Sigma(\cl{X})).\]
\end{defn}

In this paper we will be interested in the magnitude homology of categories enriched in each of the three monoidal categories discussed in \Cref{sec:enriched_cats}, namely \((\cn{Set}, \times, \{*\})\), \((\cn{Truth}, \wedge, T)\) and \(([0, \infty], +, 0)\). The category \(\cn{Truth}\) can be embedded into \(\cn{Set}\) by sending the object \(T\) to \(\{*\}\) and \(F\) to the empty set, and this embedding is strong symmetric monoidal. On \(\cn{Set}\), we will always take as coefficients the free abelian group functor \(\Sigma: \cn{Set} \to \cn{Ab}\), and we obtain our coefficient functor on \(\cn{Truth}\) by restriction of \(\Sigma\). Thus, in practice, we treat preorders as ordinary categories.

\begin{eg}\label{eg:categ_ordinary}
The simplex category \(\Delta\) can be identified with the full subcategory of \(\cn{Cat}\) on nonempty finite linear orders; we write \([n]\) for the linear order on \(n+1\) objects. The \textbf{nerve} of a small category \(\cl{X}\) is then the simplicial set defined by
\[S(\cl{X})_n = \cn{Cat}([n], \cl{X}).\]
Concretely, the \(n\)-simplices of \(S(\cl{X})\) are \(n\)-tuples of composable morphisms in \(\cl{X}\). The \textbf{classifying space} of \(\cl{X}\), denoted \(\bb{B}\cl{X}\), is the geometric realization \(|S(\cl{X})|\).

The magnitude complex of \(\cl{X}\) with respect to the free abelian group functor \(\Sigma\) is precisely the complex of simplicial chains in the nerve of \(\cl{X}\). It follows that \(MH_\bullet^\Sigma(\cl{X})\) coincides with the singular homology of the classifying space \(\bb{B}\cl{X}\). In particular, the magnitude homology of a group (regarded as a one-object category) is its ordinary group homology, and the magnitude homology of a poset is the homology of its order complex.
\end{eg}

In simple cases the normalized magnitude complex has the following convenient description.

\begin{lem}\label{lem:normalized}
Suppose \(\cl{X}\) is a \(\cf{V}\)-category such that \(\cl{X}(x,x) \cong I\) for every \(x \in \cl{X}\). Then the normalized magnitude complex of \(\cl{X}\) is given in degree \(n\) by
\[MC_n^\Sigma(\cl{X}) = \bigoplus
\Sigma \cl{X} (x_0,x_1) \otimes \cdots \otimes \Sigma \cl{X}(x_{n-1}, x_n)\]
where the sum is over tuples \(x_0, \ldots x_n\) such that \(x_i \neq x_{i+1}\) for all \(i\).
\end{lem}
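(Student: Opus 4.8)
The plan is to identify the degenerate subobject in each degree explicitly. Recall from \Cref{def:MC_MH} (and the discussion preceding it) that $MC_n^\Sigma(\cl{X}) = C_n/D_n$, where $C_n = MB_n^\Sigma(\cl{X})$ and $D_n = \sum_i \sigma_i(C_{n-1})$ is the subobject spanned by the images of the degeneracy maps. Since $MB_n^\Sigma(\cl{X})$ is a direct sum indexed by all tuples $(x_0,\dots,x_n)$, I would first split it as $M_n^{\mathrm{nd}} \oplus M_n^{\mathrm{d}}$, where $M_n^{\mathrm{nd}}$ gathers the summands whose tuples satisfy $x_i \neq x_{i+1}$ for every $i$ and $M_n^{\mathrm{d}}$ gathers the rest. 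The lemma then reduces to the single claim $D_n = M_n^{\mathrm{d}}$, since the splitting immediately gives $MC_n^\Sigma(\cl{X}) = MB_n^\Sigma(\cl{X})/D_n \cong M_n^{\mathrm{nd}}$, the asserted sum over tuples with no consecutive repetition.

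The inclusion $D_n \subseteq M_n^{\mathrm{d}}$ is the easy half and needs no hypothesis. The degeneracy $\sigma_{n-1}^i$ carries the summand indexed by $(x_0,\dots,x_{n-1})$ into the one indexed by $(x_0,\dots,x_i,x_i,x_{i+1},\dots,x_{n-1})$---inserting the factor $\Sigma\cl{X}(x_i,x_i)$ by means of the monoidal unit---so every summand in its image is indexed by a tuple with the consecutive repetition $x_i = x_{i+1}$, and therefore lies in $M_n^{\mathrm{d}}$. Summing over $i$ gives $D_n \subseteq M_n^{\mathrm{d}}$.

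The reverse inclusion $M_n^{\mathrm{d}} \subseteq D_n$ is where the hypothesis $\cl{X}(x,x) \cong I$ is essential, and I expect it to be the only substantive step. Fix a summand of $M_n^{\mathrm{d}}$, indexed by a tuple with $x_j = x_{j+1}$ for some $j$; the plan is to exhibit it as the image under $\sigma_{n-1}^j$ of the summand obtained by deleting the repeated object. On that summand $\sigma_{n-1}^j$ is the identity on all tensor factors save the newly inserted one, where it acts by the composite $\bb{I} \xrightarrow{\cong} \Sigma I \xrightarrow{\Sigma(\id_{x_j})} \Sigma\cl{X}(x_j,x_j)$. The crux is that this composite is an isomorphism: the first arrow is invertible because $\Sigma$ is strong monoidal, while $\cl{X}(x_j,x_j) \cong I$ forces $\cl{X}(x_j,x_j)$ to be terminal in the semicartesian category $\cf{V}$, so that $\id_{x_j}$ is a morphism between terminal objects and hence an isomorphism, whence so is $\Sigma(\id_{x_j})$. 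Thus $\sigma_{n-1}^j$ restricts to an isomorphism onto the chosen summand, placing it inside $D_n$; ranging over all degenerate tuples yields $M_n^{\mathrm{d}} \subseteq D_n$.

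Combining the two inclusions gives $D_n = M_n^{\mathrm{d}}$ and hence the claimed description. It is worth noting that the hypothesis cannot be dropped: without it a degeneracy reaches only the `identity part' $\Sigma I$ of the factor $\Sigma\cl{X}(x_i,x_i)$, so $D_n$ would be a proper subobject of $M_n^{\mathrm{d}}$ and the quotient would retain extra contributions from the diagonal hom-objects.
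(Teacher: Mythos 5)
Your proposal is correct and follows essentially the same route as the paper's proof: both hinge on the observation that, since \(\cf{V}\) is semicartesian and \(\cl{X}(x,x) \cong I\), the map \(\id_x: I \to \cl{X}(x,x)\) (hence each degeneracy) is an isomorphism onto the summands indexed by tuples with a consecutive repeat, so the degenerate subobject is exactly the sum of those summands. Your write-up merely makes explicit the two inclusions \(D_n \subseteq M_n^{\mathrm{d}}\) and \(M_n^{\mathrm{d}} \subseteq D_n\) that the paper's terser argument leaves implicit, and correctly spells out why the hypothesis forces \(\id_x\) to be invertible (a morphism between terminal objects).
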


\begin{proof}
The condition on \(\cl{X}\) implies that the map \(\id_x: I \to \cl{X}(x,x)\) is an isomorphism for every \(x \in \cl{X}\); the same is of course true of the induced map \(\bb{I} \to \Sigma \cl{X}(x,x)\), and therefore of every degeneracy map. Taking the quotient of \(\widetilde{MC}_n^\Sigma(\cl{X})\) by the image of every degeneracy map thus sends to 0 every summand associated to a tuple \((x_0, \ldots, x_n)\) in which \(x_i = x_{i+1}\) for some \(i\).
\end{proof}

We record a miniature example for illustration and for later reference.

\begin{eg}\label{eg:S1}
Let \(\bb{S}^1\) denote the category
\bdiag
A \arrow[bend left=60]{r}{f} \arrow[swap, bend right=60]{r}{g} & B
\ediag
(Identity arrows not pictured.) The complex \(MC_\bullet^\Sigma(\bb{S}^1)\) is generated in degree 0 by the set of objects, \(\{A, B\}\). Applying \Cref{lem:normalized} and using the fact that \(\Sigma\bb{S}^1(B,A) = 0\), one sees that in degree 1 it is generated by \(\{f,g\}\), and in all higher degrees it vanishes. The boundary map \(\partial_1: \bb{Z} \cdot \{f,g\} \to \bb{Z} \cdot \{A, B\}\) is given on generators by \(\partial_1(f) = B - A = \partial_1(g)\); its image is generated by \(B-A\) and its kernel by \(f-g\). Thus, \(MH_0^\Sigma(\bb{S}^1) = \bb{Z} \cdot \{A,B\} / \langle B-A \rangle \cong \bb{Z}\) and \(MH_1^\Sigma(\bb{S}^1) = \langle f-g \rangle / 0 \cong \bb{Z}\). That is,
\[MH_\bullet^\Sigma(\bb{S}^1) = \begin{cases} \bb{Z} & *=0, 1 \\
 0 & \text{otherwise}. \end{cases}\]
\end{eg}

Magnitude homology is designed to categorify \emph{magnitude}: a numerical invariant of enriched categories with a remarkable web of connections to notions of `size' of relevance in different corners of mathematics. The study of magnitude precedes the construction of magnitude homology by almost a decade---originating with Leinster in \cite{LeinsterMagnitude2008}---and is especially well developed in the context of metric spaces. The magnitude of a metric space is best understood not just as a single number but as a real-valued function or power series \cite{LeinsterMagnitude2013}, which has been shown to carry information about the intrinsic volumes of a space (e.g.~\cite{BarceloCarbery2018, GimperleinGoffeng, MeckesMagnitude2019}), curvature and the Willmore energy \cite{GimperleinGoffengWillmore}, and Minkowski dimension \cite{MeckesMagnitude2015}. (These references are far from exhaustive.)

The desire for magnitude homology to recover, via its Euler characteristic, the magnitude of a (finite) metric space motivates a particular choice of coefficient functor on \(([0, \infty], +, 0)\), which we now describe.

\begin{defn}\label{def:AbR}
Let \(\cn{Ab}^{[0, \infty)}\) denote the category of abelian groups graded by the nonnegative reals. Given \(A \in \cn{Ab}^{[0,\infty)}\) we write \(A^\ell\) for the value of \(A\) at \(\ell \in [0, \infty)\), and refer to \(\ell\) as the \textbf{length grading} of \(A^\ell\).

The category \(\cn{Ab}^{[0,\infty)}\) inherits, grading-wise, many of the valuable properties of \(\cn{Ab}\)---in particular, since \(\cn{Ab}\) is abelian, so is \(\cn{Ab}^{[0,\infty)}\). It carries a closed symmetric monoidal structure given by the \textbf{convolution product} \(\ost\), for which
\[(A \ost B)^\ell = \bigoplus_{r + s = \ell} A^r \otimes_\bb{Z} B^s\]
for each \(\ell \in [0, \infty)\). The unit object has \(\bb{Z}\) in length grading 0, and 0 in all other gradings. There is a strong symmetric monoidal functor \(\Sigma^*: [0, \infty] \to \cn{Ab}^{[0, \infty)}\) given by \(\Sigma^*(\infty) = 0\) and, for \(t < \infty\),
\[\Sigma^\ell(t) = \begin{cases} \bb{Z} & \ell = t \\ 0 & \ell \neq t. \end{cases}\]
\end{defn}

The magnitude homology of a metric space is defined by Leinster and Shulman to be its magnitude homology with respect to \(\Sigma^*\) \cite{LeinsterMagnitude2017v3}. In the case of a graph, this agrees with the earlier definition given by Hepworth and Willerton \cite{HepworthWillerton2017}. \Cref{prop:MC_met} will describe the magnitude chain complex explicitly. To simplify notation we suppress \(\Sigma^*\), writing \(MC_k^\ell(X)\) and \(MH_k^\ell(X)\) for the magnitude complex---respectively, magnitude homology---of a metric space \(X\) in length grading \(\ell\) and homological degree \(k\).

\begin{defn}
A point \(z\) in a metric space \(X\) is said to be \textbf{between} points \(x\) and \(y\) if \(d(x,y) = d(x,z) + d(z,y)\). If, in addition, \(x \neq z \neq y\), we say \(z\) is \textbf{strictly between} \(x\) and \(y\). Points \(x,y \in X\) are \textbf{non-adjacent} if there exists a point \(z \in X\) lying strictly between them, and \textbf{adjacent} otherwise.
\end{defn}

\begin{prop}[\cite{LeinsterMagnitude2017v3}, Definition 3.3]\label{prop:MC_met}
The normalized magnitude chain complex of a metric space \(X\) is given for \(\ell \in [0, \infty)\) and \(n \in \bb{N}\) by
\[MC_n^\ell(X) = \bb{Z} \cdot \left\{(x_0, \ldots, x_n) \, \middle\vert \, x_i \neq x_{i+1} \text{ for all \(i\) and } \sum_{i=0}^{n-1} d(x_i, x_{i+1}) = \ell\right\}.\]
The differential is \(\partial = \sum_{i=1}^{n-1} (-1)^i \delta_i\) where \(\delta_i\) is specified on generators by
\[\delta_i(x_0, \ldots, x_n) = (x_0, \ldots, x_{i-1}, x_{i+1}, \ldots, x_n)\]
if \(x_i\) is between \(x_{i-1}\) and \(x_{i+1}\), and \(\delta_i(x_0, \ldots, x_n) = 0\) otherwise. \qed
\end{prop}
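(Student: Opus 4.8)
The plan is to specialize the general construction of \Cref{def:mag_nerve} to the data $\cf{V} = ([0,\infty],+,0)$, $\ab{A} = \cn{Ab}^{[0,\infty)}$ and $\Sigma = \Sigma^*$, and to read off both the graded groups and the face maps one length grading at a time. For the generators, the essential point is that $\Sigma^*$ is strong symmetric monoidal, so for any tuple $(x_0,\ldots,x_n)$ there is a canonical isomorphism
\[\Sigma^*(d(x_0,x_1)) \ost \cdots \ost \Sigma^*(d(x_{n-1},x_n)) \;\cong\; \Sigma^*\!\left(\sum_{i=0}^{n-1} d(x_i,x_{i+1})\right).\]
By the definition of $\Sigma^*$ the right-hand side is $\bb{Z}$ concentrated in the length grading $\sum_i d(x_i,x_{i+1})$ when that sum is finite, and is $0$ as soon as one edge has infinite length. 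Taking the grading-$\ell$ summand of $MB_n^{\Sigma^*}(X)$ therefore produces the free abelian group on those tuples whose edge-lengths sum to $\ell$; in particular only tuples of finite total length contribute to a finite grading. Since $d(x,x)=0$ is the monoidal unit of $[0,\infty]$, \Cref{lem:normalized} applies and cuts this down to tuples with $x_i \neq x_{i+1}$ for all $i$, yielding exactly the claimed description of $MC_n^\ell(X)$.

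The computation of the differential rests on a single observation: morphisms in $\cn{Ab}^{[0,\infty)}$ preserve the length grading, so any map $\Sigma^*(s) \to \Sigma^*(t)$ between objects concentrated in distinct gradings $s \neq t$ must be zero, whereas $\Sigma^*$ carries an identity arrow to an isomorphism. Hence $\Sigma^*$ sends each identity of $[0,\infty]$ to an iso and each strict inequality to $0$, and I would apply this to the structural arrows in \Cref{def:mag_nerve}. For $1 \le i \le n-1$, the map $\delta_i$ is built from the composition arrow $d(x_{i-1},x_i)+d(x_i,x_{i+1}) \to d(x_{i-1},x_{i+1})$; under $\Sigma^*$ this is an isomorphism precisely when the triangle inequality is an equality---that is, when $x_i$ lies between $x_{i-1}$ and $x_{i+1}$---and is zero otherwise, and when it is an isomorphism the total grading is preserved so that the summand maps identically onto the summand for $(x_0,\ldots,\widehat{x_i},\ldots,x_n)$. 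For $i = 0$ and $i=n$ the face map instead factors through the terminal arrow $d(x_0,x_1)\to 0$ (respectively $d(x_{n-1},x_n)\to 0$); on a normalized generator we have $x_0\neq x_1$, so $d(x_0,x_1) > 0$ by separatedness and $\Sigma^*$ kills this arrow. Thus $\delta_0$ and $\delta_n$ vanish on $MC_\bullet^{\Sigma^*}(X)$, and the alternating sum $\sum_{i=0}^n(-1)^i\delta_i$ collapses to $\sum_{i=1}^{n-1}(-1)^i\delta_i$, as claimed.

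The main obstacle is conceptual rather than computational: it lies in pinning down the action of $\Sigma^*$ on the composition and terminal arrows of the $[0,\infty]$-category $X$ and seeing that the grading constraint alone encodes both the between-condition on the inner faces and the vanishing of the outer faces. The one place where the hypotheses genuinely enter is the vanishing of $\delta_0$ and $\delta_n$, which uses separatedness of the metric (so that $x_i \neq x_{i+1}$ forces $d(x_i,x_{i+1})>0$); everything else is a direct translation of \Cref{def:mag_nerve} through the explicit formula for $\Sigma^*$, provided one checks that no additional sign or coherence datum from the strong monoidal structure interferes with the identification of $\delta_i$ with the between-indicator.
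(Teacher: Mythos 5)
Your proposal is correct: the paper itself states this proposition without proof, citing it as Definition 3.3 of \cite{LeinsterMagnitude2017v3}, and your derivation---unfolding \Cref{def:mag_nerve} and \Cref{lem:normalized} for $\Sigma = \Sigma^*$, using that grading-preservation in $\cn{Ab}^{[0,\infty)}$ forces $\Sigma^*$ to send strict inequalities to zero and identities to isomorphisms, and that separatedness kills the outer faces $\delta_0, \delta_n$ on nondegenerate tuples---is exactly the standard argument underlying that citation. The only point worth making explicit is that separatedness also ensures an inner face of a nondegenerate tuple is again nondegenerate (if $x_{i-1} = x_{i+1}$ then betweenness would force $x_i = x_{i-1}$, contradicting $x_{i-1} \neq x_i$), so the differential induced on the quotient complex is literally the stated formula with no further corrections.
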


Since \(\partial_1 = 0\), we have \(MH_0^\ell(\cl{X}) = MC_0^\ell(\cl{X})\). In length grading 0 this group is freely generated by the set of points in \(X\), while in length gradings \(\ell > 0\) it vanishes. Magnitude homology in degree 1 is also straightforward to describe:

\begin{thm}[\cite{LeinsterMagnitude2017v3}, Theorem 4.3]\label{prop:MH1_met}
Let \(X\) be a metric space. Then \(MH_1^\ell(X)\) is the free abelian group on the set of ordered pairs \((x,y)\) of distinct points in \(X\) such that \(x\) and \(y\) are adjacent and \(d(x,y) = \ell\).
\end{thm}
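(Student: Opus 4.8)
The plan is to read everything off the explicit chain-level description in \Cref{prop:MC_met}. Since the text has already observed that \(\partial_1 = 0\), we have \(MH_1^\ell(X) = MC_1^\ell(X)/\im(\partial_2)\), so the whole computation reduces to pinning down the image of \(\partial_2 \colon MC_2^\ell(X) \to MC_1^\ell(X)\). Recall that \(MC_1^\ell(X)\) is free on the set of ordered pairs \((x,y)\) with \(x \neq y\) and \(d(x,y) = \ell\), while \(MC_2^\ell(X)\) is free on the triples \((x_0,x_1,x_2)\) with \(x_0 \neq x_1\), \(x_1 \neq x_2\) and \(d(x_0,x_1) + d(x_1,x_2) = \ell\).

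First I would compute \(\partial_2\) on a basis triple. Since \(\partial = \sum_{i=1}^{n-1}(-1)^i\delta_i\), in degree \(2\) this is just \(\partial_2 = -\delta_1\), and by \Cref{prop:MC_met} we have \(\delta_1(x_0,x_1,x_2) = (x_0,x_2)\) when \(x_1\) lies between \(x_0\) and \(x_2\), and \(\delta_1(x_0,x_1,x_2) = 0\) otherwise. The one routine check here is that the nonzero output is a legitimate basis element of \(MC_1^\ell(X)\): the betweenness condition \(d(x_0,x_2) = d(x_0,x_1) + d(x_1,x_2)\) combined with the length constraint gives \(d(x_0,x_2) = \ell\), and since the metric is separated and \(x_0 \neq x_1 \neq x_2\), both summands are strictly positive, forcing \(x_0 \neq x_2\). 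Thus a nonzero \(\delta_1(x_0,x_1,x_2)\) is precisely a basis vector \((x_0,x_2)\) for which \(x_1\) witnesses that \(x_0\) and \(x_2\) are non-adjacent.

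Next I would identify \(\im(\partial_2)\) with the free subgroup spanned by the non-adjacent basis vectors. One inclusion is the preceding observation: every nonzero image of a basis triple is, up to sign, a non-adjacent pair. For the reverse inclusion, given any non-adjacent pair \((x,y)\) with \(d(x,y)=\ell\), by definition there is a point \(z\) lying strictly between them, so \(d(x,z)+d(z,y) = d(x,y) = \ell\) and \((x,z,y)\) is a basis triple of \(MC_2^\ell(X)\) with \(\partial_2(x,z,y) = -(x,y)\). Hence \(\im(\partial_2)\) is exactly the subgroup generated by the set \(\{(x,y) : x \neq y,\ d(x,y)=\ell,\ (x,y)\text{ non-adjacent}\}\).

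Finally, since \(MC_1^\ell(X)\) is free abelian on a basis \(B\) and \(\im(\partial_2)\) is the subgroup spanned by the sub-collection \(B_{\mathrm{na}} \subseteq B\) of non-adjacent basis vectors, the quotient is free abelian on the complementary basis \(B \setminus B_{\mathrm{na}}\)---the adjacent pairs---which is precisely the asserted description of \(MH_1^\ell(X)\). The argument is short; the only point demanding genuine care is matching \(\im(\partial_2)\) exactly with the span of the non-adjacent basis vectors, neither larger nor smaller, for which separatedness of the metric is quietly used to guarantee that \(\delta_1\) never produces a degenerate pair.
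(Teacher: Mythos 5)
Your proof is correct and complete. The paper itself does not prove this theorem---it imports it from \cite{LeinsterMagnitude2017v3} (Theorem 4.3)---but your argument is exactly the standard chain-level computation underlying the cited result: with \(\partial_1 = 0\) and \(\partial_2 = -\delta_1\), the group \(MH_1^\ell(X)\) is the quotient of \(MC_1^\ell(X)\) by \(\im(\partial_2)\), and you correctly identify that image with the span of the non-adjacent basis pairs, attending to both points that genuinely need care: separatedness of the metric forcing \(x_0 \neq x_2\) in any nonzero face, and the reverse inclusion via a strictly-between witness \(z\) giving \(\partial_2(x,z,y) = -(x,y)\).
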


The theorem implies, for example, that the first magnitude homology of a closed subset \(X\) of \(\bb{R}^n\) vanishes if and only if \(X\) is convex (\cite{LeinsterMagnitude2017v3}, Corollary 4.9); otherwise, magnitude homology records quantitative information about the `size' of its concavities. The higher magnitude homology groups similarly capture geometric, as opposed to topological, information. For instance, Asao has established that the second magnitude homology of a metric space detects the presence of closed geodesics, and that the curvature of a space strongly influences the gradings in which its magnitude homology of any degree can be non-vanishing (Theorem 5.3 and Corollary 4.6 in \cite{AsaoMH2019}).

To close this section, and to prepare the ground for the rest of the paper, we return to the general setting of semicartesianly enriched categories. The proof of \Cref{prop:MB_monoid} is straightforward, but lengthy, and just the outline is given here; details can be found in Appendix A of the author's thesis \cite{Roff2022}.

\begin{prop}\label{prop:MB_monoid}
Let \(\cf{V}\) be a semicartesian category, \(\ab{A}\) a symmetric monoidal closed abelian category, and \(\Sigma: \cf{V} \to \ab{A}\) a strong symmetric monoidal functor. The magnitude nerve defines a strong symmetric monoidal functor
\[MB^\Sigma: (\cf{V}\cn{Cat}, \otimes, \cl{I}) \to ([\Delta^\op, \ab{A}], \otimes_\pw, I_\pw).\]
\end{prop}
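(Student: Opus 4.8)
The plan is to exhibit the structure maps of a strong symmetric monoidal functor and to verify that they are natural isomorphisms of simplicial objects. Concretely, I would construct for each pair of \(\cf{V}\)-categories \(\cl{X}, \cl{Y}\) a comparison map
\[
\phi_{\cl{X},\cl{Y}} \colon MB^\Sigma(\cl{X}) \otimes_\pw MB^\Sigma(\cl{Y}) \longrightarrow MB^\Sigma(\cl{X}\otimes\cl{Y}),
\]
together with a unit comparison \(\phi \colon I_\pw \to MB^\Sigma(\cl{I})\), then show each is an isomorphism, natural in its arguments, and that the two satisfy the associativity, unit and symmetry coherence diagrams required of a strong symmetric monoidal functor.

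Since \(\otimes_\pw\) is computed degreewise, I would build \(\phi_{\cl{X},\cl{Y}}\) one simplicial degree at a time. In degree \(n\) the source is \(MB_n^\Sigma(\cl{X}) \otimes MB_n^\Sigma(\cl{Y})\). Because \(\ab{A}\) is closed monoidal, \(\otimes\) preserves coproducts in each variable (as recorded in the Conventions), so this distributes into a direct sum indexed by pairs of \((n{+}1)\)-tuples, one in \(\cl{X}\) and one in \(\cl{Y}\). These pairs are in canonical bijection with \((n{+}1)\)-tuples of objects of \(\cl{X}\otimes\cl{Y}\), since \(\ob(\cl{X}\otimes\cl{Y}) = \ob(\cl{X})\times\ob(\cl{Y})\). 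On each summand I would then compose three isomorphisms: first the symmetry isomorphisms of \(\ab{A}\), which shuffle the \(\cl{X}\)-factors and \(\cl{Y}\)-factors from the order ``all \(\cl{X}\) then all \(\cl{Y}\)'' into the interleaved order \(\Sigma\cl{X}(x_{i-1},x_i)\otimes\Sigma\cl{Y}(y_{i-1},y_i)\) within each slot; then the strong monoidal structure map of \(\Sigma\) in reverse, identifying \(\Sigma\cl{X}(x_{i-1},x_i)\otimes\Sigma\cl{Y}(y_{i-1},y_i)\) with \(\Sigma\big(\cl{X}(x_{i-1},x_i)\otimes\cl{Y}(y_{i-1},y_i)\big)\); and finally the definition of the tensor product of \(\cf{V}\)-categories, which rewrites the latter as \(\Sigma\big((\cl{X}\otimes\cl{Y})((x_{i-1},y_{i-1}),(x_i,y_i))\big)\). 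Each constituent is an isomorphism, so \(\phi_{\cl{X},\cl{Y}}\) is one.

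For the unit, the one-object category \(\cl{I}\) has \(\cl{I}(*,*) = I\), so \(MB_n^\Sigma(\cl{I}) = \Sigma(I)^{\otimes n} \cong \bb{I}^{\otimes n} \cong \bb{I}\) using that \(\Sigma\) is strong monoidal, and this matches the constant simplicial object \((I_\pw)_n = \bb{I}\); I would take \(\phi\) to be this identification. Naturality of both \(\phi_{\cl{X},\cl{Y}}\) and \(\phi\) is immediate, since every ingredient is either a structure isomorphism of \(\ab{A}\) or a component of the natural transformations making \(\Sigma\) monoidal. Likewise the three coherence axioms reduce, summand by summand, to the coherence pentagon, triangle and hexagons in \(\ab{A}\) together with the fact that \(\Sigma\) is itself strong symmetric monoidal.

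The main obstacle---and the reason the full argument is ``straightforward but lengthy''---is verifying that the degreewise isomorphisms \((\phi_{\cl{X},\cl{Y}})_n\) are compatible with the simplicial structure, i.e.\ that they commute with all face and degeneracy maps. The interior faces \(\delta^i\) (for \(1 \le i \le n-1\)) act by composition, and composition in \(\cl{X}\otimes\cl{Y}\) is defined componentwise using the symmetry \(\gamma\) of \(\cf{V}\) to interleave the two factors; the extremal faces \(\delta^0, \delta^n\) act via the terminal maps, which for \(\cl{X}\otimes\cl{Y}\) factor as the product of the terminal maps of \(\cl{X}\) and \(\cl{Y}\) (here semicartesianness supplies \(I\otimes I \cong I\) terminal). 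To check compatibility one must track how the shuffle permutation assembled from the symmetry of \(\ab{A}\) interacts with these operations; the key inputs are the naturality of the symmetry, the hexagon coherence in \(\ab{A}\), and the compatibility of \(\Sigma\)'s monoidal structure maps with \(\gamma\)---exactly the ingredients that already make the simplicial identities hold in Definition~\ref{def:mag_nerve}. This is a finite but intricate diagram chase in each degree, and is where essentially all the genuine bookkeeping lives.
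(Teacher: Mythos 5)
Your proposal is correct and follows essentially the same route as the paper's proof: the unit comparison built from \(\bb{I} \xto{\sim} \Sigma(I)\), the degreewise tensor comparison assembled from distributivity of \(\otimes\) over coproducts, the symmetry of \(\ab{A}\) to interleave factors, and the structure maps of \(\Sigma\), with the remaining work being the verification of simplicial compatibility, naturality and coherence. The paper likewise defers that bookkeeping (to Appendix A of \cite{Roff2022}), and your identification of exactly which coherence facts drive it matches the paper's sketch.
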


\begin{proof}[Sketch proof]
To see that there is an isomorphism of simplicial objects 
\(\Phi: {I}_\pw \xto{\sim} MB^\Sigma(\cl{I}),\)
note that the monoidal structure of \(\Sigma\) supplies an isomorphism \(\phi: {I}_\ab{A} \xto{\sim} \Sigma({I}_\cf{V})\), where \(I_\cf{V}\) denotes the unit in \(\cf{V}\). Hence, for each \(n \in \bb{N}\) there is an isomorphism
\[\Phi_n: {I}_\ab{A} \cong \underbrace{{I}_\ab{A} \otimes \cdots \otimes {I}_\ab{A}}_{n} \xto{\phi^{\otimes n}} \underbrace{ \Sigma({I}_\cf{V}) \otimes \cdots \otimes \Sigma({I}_\cf{V})}_{n} = MB_n(\cl{I})\]
and since the face maps and degeneracies of  \(MB^\Sigma(\cl{I})\) and \({I}_\pw\) are given by identities, these isomorphisms plainly define a map of simplicial objects.

To see that for each \(\cl{X}, \cl{Y} \in \ob(\cf{V}\cn{Cat})\) there is an isomorphism
\[\Psi_{\cl{X},\cl{Y}}: MB^\Sigma(\cl{X}) \otimes_\pw MB^\Sigma(\cl{Y}) \xto{\sim} MB^\Sigma(\cl{X} \otimes \cl{Y})\]
note that monoidal structure of \(\Sigma\) supplies a family of isomorphisms 
\[\psi_{V,W}: \Sigma(V) \otimes \Sigma(W) \xto{\sim} \Sigma(V \otimes W),\]
natural in \(V,W \in \ob(\cf{V})\). The assumption that the monoidal structure on \(\ab{A}\) is closed ensures that the tensor product distributes over arbitrary coproducts. Consequently there is, in each degree \(n\), an isomorphism
\[(\Psi_{\cl{X},\cl{Y}})_n: \left(MB^\Sigma(\cl{X}) \otimes_\pw MB^\Sigma(\cl{Y})\right)_n \to MB_n^\Sigma(\cl{X} \otimes \cl{Y})\]
obtained as the following composite, in which the two unnamed isomorphisms are due, respectively, to the distributivity of the tensor product and to the symmetry of the monoidal structure in \(\ab{A}\):
\begin{equation*}
\adjustbox{scale=1,center}{%
\begin{tikzcd}[row sep=1.5em]
\left(MB^\Sigma(\cl{X}) \otimes_\pw MB^\Sigma(\cl{Y})\right)_n
	\arrow[equals]{d} \\ [-5]
MB_n^\Sigma(\cl{X}) \otimes_{} MB_n^\Sigma(\cl{Y})
	\arrow[equals]{d} \\ [-5]
\displaystyle{\left( \bigoplus_{\substack{x_0,\ldots,x_n \\ \in \cl{X}}} \Sigma\cl{X}(x_0,x_1) \otimes \cdots \otimes \Sigma\cl{X}(x_{n-1},x_n) \right) \otimes \left( \bigoplus_{\substack{y_0,\ldots,y_n \\ \in \cl{Y}}} \Sigma\cl{Y}(y_0,y_1) \otimes \cdots \otimes \Sigma\cl{Y}(y_{n-1},y_n) \right)}
	\arrow{d}{\cong} \\
\displaystyle{\bigoplus_{\substack{x_0,\ldots,x_n \in \cl{X} \\ y_0,\ldots, y_n \in \cl{Y}}}\left(  \Sigma\cl{X}(x_0,x_1) \otimes \cdots \otimes \Sigma\cl{X}(x_{n-1},x_n) \otimes \Sigma\cl{Y}(y_0,y_1) \otimes \cdots \otimes \Sigma\cl{Y}(y_{n-1},y_n)\right)}
	\arrow{d}{\cong} \\
\displaystyle{\bigoplus_{\substack{x_0,\ldots,x_n \in \cl{X} \\ y_0,\ldots, y_n \in \cl{Y}}} \left(\Sigma\cl{X}(x_0,x_1) \otimes \Sigma\cl{Y}(y_0,y_1)\right) \otimes \cdots \otimes \left(\Sigma\cl{X}(x_{n-1},x_n) \otimes \Sigma\cl{Y}(y_{n-1},y_n)\right)} 
	\arrow{d}{\bigoplus \psi_{\cl{X}(x_0,x_1),\cl{Y}(y_0,y_1)} \otimes \cdots \otimes \psi_{\cl{X}(x_{n-1},x_n),\cl{Y}(y_{n-1},y_n)}} \\
\displaystyle{\bigoplus_{\substack{x_0,\ldots,x_n \in \cl{X} \\ y_0,\ldots, y_n \in \cl{Y}}} \Sigma\left(\cl{X}(x_0,x_1) \otimes \cl{Y}(y_0,y_1)\right) \otimes \cdots \otimes \Sigma\left(\cl{X}(x_{n-1},x_n) \otimes \cl{Y}(y_{n-1},y_n)\right)}
	\arrow[equals]{d}{} \\ [-5]
\displaystyle{\bigoplus_{\substack{(x_0,y_0),\ldots,(x_n,y_n) \\ \in \cl{X} \otimes_{\cf{V}} \cl{Y}}}\Sigma(\cl{X} \otimes \cl{Y})((x_0,y_0),(x_1,y_1)) \otimes \cdots \otimes \Sigma(\cl{X} \otimes \cl{Y}) ((x_{n-1},y_{n-1}),(x_n,y_n))} 
	\arrow[equals]{d} \\ [-5]
MB_n^\Sigma(\cl{X} \otimes \cl{Y}).
\end{tikzcd}
}
\end{equation*}

To verify that the family \(\Psi_{\cl{X},\cl{Y}} = \left((\Psi_{\cl{X},\cl{Y}})_n \st n \in \bb{N}\right)\) forms a map of simplicial objects, one must check its naturality with respect to face maps and degeneracies; it remains then to verify that the family \(\Psi = \left(\Psi_{\cl{X}, \cl{Y}} \st \cl{X}, \cl{Y} \in \cf{V}\cn{Cat}\right)\) is natural with respect to \(\cf{V}\)-functors \(\cl{X} \to \cl{X}'\) and \(\cl{Y} \to \cl{Y}'\) and satisfies the relevant coherence axioms. Each of these verifications is straightforward, with the coherence of the monoidal structure on \(\ab{A}\) reducing the check to an application of the monoidal structure of \(\Sigma\).
\end{proof}

%%%%%%%%%%%%%%%%%%%%%%%%%%%%%%%%%%%%%%%%%%%%%%%%%%%%%

\section{The K\"unneth theorem}\label{sec:Kunneth}

As a first application of the monoidal structure of the magnitude nerve (\Cref{prop:MB_monoid}), in this section we prove a general K\"unneth theorem for the magnitude homology of semicartesianly enriched categories.

The classical K\"unneth theorem relates the singular homology of a product of two topological spaces \(X\) and \(Y\) to the tensor product of their homologies. It is proved as the composite of two results: the topological Eilenberg--Zilber theorem, which says that the singular chain complex \(C_\bullet(X \times Y)\) is quasi-isomorphic to the tensor product \(C_\bullet(X) \otimes C_\bullet(Y)\), and an algebraic K\"unneth theorem, which relates the homology of a tensor product of chain complexes to the tensor product of their homologies. (E.g.~Hatcher \cite{HatcherAlgebraic2001}, \S3.B.)

Our proof of the K\"unneth theorem for magnitude homology has the same structure. Using the monoidal structure of the magnitude nerve, we first prove an Eilenberg--Zilber-type theorem for the magnitude chain complex, and from there we can derive the main result via the algebraic K\"unneth theorem. Underlying our Eilenberg--Zilber theorem is the following fundamental fact about double complexes arising from bisimplicial objects, which is proved as Theorem 8.5.1 in Weibel \cite{WeibelIntroduction1994}.

\begin{thm}[Bisimplicial Eilenberg--Zilber theorem]\label{thm:EZ_general}
Let \(B\) be a bisimplicial object in an abelian category. Then there 
is a natural isomorphism 
\[
H_\bullet N(\diag(B)) \cong H_\bullet\Tot(CB).
\]
Moreover, there exist natural chain maps
\[
C(\diag(B)) \rightleftarrows \Tot(CB)
\]
which realize this isomorphism on homology. \qed
\end{thm}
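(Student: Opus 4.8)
The plan is to realize the comparison by the two classical natural transformations of the Eilenberg--MacLane theorem---the Alexander--Whitney map and the shuffle map---and to show that they are inverse chain homotopy equivalences. Writing \(C\) for the unnormalized functor, the double complex \(CB\) has \((CB)_{p,q} = B_{p,q}\) with horizontal and vertical differentials \(\partial^h, \partial^v\) assembled from the horizontal and vertical face operators \(d^h, d^v\) of \(B\), and \(\Tot(CB)_n = \bigoplus_{p+q=n} B_{p,q}\). I would build natural chain maps \(f \colon C(\diag B) \to \Tot(CB)\) and \(g \colon \Tot(CB) \to C(\diag B)\), prove \(f \circ g = \id\) and \(g \circ f \simeq \id\), and then pass from \(C(\diag B)\) to \(N(\diag B)\) along the standard quasi-isomorphism to obtain the stated homology isomorphism together with the comparison maps.

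First I would define the maps by their usual uniform formulas in the simplicial operators of \(B\). The Alexander--Whitney map \(f\) acts in degree \(n\) on the summand \(B_{n,n} = C_n(\diag B)\) with \((p,q)\)-component (for \(p+q=n\)) given by the front-face/back-face composite lowering the horizontal degree to \(p\) using the final \(q\) horizontal faces and the vertical degree to \(q\) using the zeroth vertical face \(p\) times, \(\bigl(d^h_{p+1}\cdots d^h_n\bigr)\bigl(d^v_0\bigr)^{p}\), which lands in \(B_{p,q}\). The shuffle map \(g\) acts on \(B_{p,q}\) by the signed sum over \((p,q)\)-shuffles \((\mu,\nu)\) of the composites \(\pm\, s^v_{\mu}\, s^h_{\nu}\) of iterated horizontal and vertical degeneracies, landing in \(B_{n,n} = C_n(\diag B)\). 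That each of \(f\) and \(g\) commutes with the differentials is a formal consequence of the simplicial identities for \(B\) (together with the fact that horizontal and vertical operators commute); the signs defining \(\partial^{\Tot}\) and the shuffle sum are arranged exactly so that the horizontal and vertical contributions cancel correctly.

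Next I would check \(f \circ g = \id_{\Tot(CB)}\), which is direct: composing a front/back face with a shuffle degeneracy kills every term except the one coming from the trivial shuffle, leaving the identity in each bidegree. The hard direction, and the main obstacle, is \(g \circ f \simeq \id_{C(\diag B)}\): I must produce a natural chain homotopy. The cleanest route is the method of acyclic models. Both functors \(C(\diag -)\) and \(\Tot(C -)\), from bisimplicial objects to nonnegatively graded chain complexes, are free and acyclic on the standard models---the external products of representable simplicial objects---so any two natural transformations between them agreeing in degree \(0\) are naturally chain homotopic, which supplies the homotopy. Since acyclic models is usually phrased for module-valued functors, the one genuine subtlety is transferring it to a general abelian category \(\ab{A}\). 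I would handle this by observing that \(f\), \(g\) and the homotopy are all assembled from the simplicial operators of \(B\) by integer-linear, natural formulas; the chain-map and chain-homotopy identities are therefore universal integer-coefficient identities among composites of face and degeneracy operators, which hold in every additive category once they hold in the universal case of bisimplicial abelian groups. (Equivalently, for a single fixed \(B\) one may invoke Freyd--Mitchell to embed the small abelian subcategory generated by the \(B_{p,q}\) exactly and fully faithfully into a module category and apply the classical Eilenberg--Zilber theorem there; naturality in \(B\) is then read off from the explicit formulas.)

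Finally, the excerpt already records that the complex of degeneracies \(D(\diag B)\) is acyclic, so the quotient \(C(\diag B) \to N(\diag B)\) is a natural quasi-isomorphism and \(H_\bullet C(\diag B) \cong H_\bullet N(\diag B)\). Transporting the chain homotopy equivalence \(C(\diag B) \simeq \Tot(CB)\) along this quasi-isomorphism yields the asserted natural isomorphism \(H_\bullet N(\diag B) \cong H_\bullet \Tot(CB)\), and exhibits \(f\) and \(g\) as the required comparison maps \(C(\diag B) \rightleftarrows \Tot(CB)\). Naturality of the entire construction in \(B\) is immediate, every map having been defined by a formula uniform in the simplicial operators.
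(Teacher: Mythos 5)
First, a point of comparison: the paper does not actually prove this statement---it is quoted from the literature (Weibel, Theorem 8.5.1) and stamped with a \(\qedsymbol\), so there is no internal proof to measure your argument against. What you have written is the classical Eilenberg--Mac\,Lane argument: Alexander--Whitney and shuffle maps, a natural chain homotopy produced by acyclic models, and a transfer from bisimplicial abelian groups to an arbitrary abelian category on the grounds that all the maps and homotopies are universal integer-linear formulas in the face and degeneracy operators. That overall strategy is sound, the choice of models \(\bb{Z}[\Delta[p]\boxtimes\Delta[q]]\) is the right one (both functors are free on them and acyclic over them, by contractibility of \(\Delta[p]\times\Delta[q]\) and the K\"unneth formula respectively), the transfer argument is the standard correct one, and the final passage from \(C(\diag B)\) to \(N(\diag B)\) along the degeneracy quotient is handled properly.

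There is, however, one concrete error: the claim that \(f \circ g = \id_{\Tot(CB)}\) on the \emph{unnormalized} total complex, verified ``directly.'' That identity holds only after normalization; on unnormalized complexes the composite differs from the identity by degenerate terms. To see it fail, take \(x \in B_{1,0}\). The unique \((1,0)\)-shuffle gives \(g(x) = s_0^v x \in B_{1,1}\), and then \(f(s_0^v x)\) has \((1,0)\)-component \(d_0^v s_0^v x = x\) but also a \((0,1)\)-component \(d_1^h s_0^v x = s_0^v d_1^h x\), which is degenerate but does \emph{not} vanish in \(C\); so \(f(g(x)) = x + s_0^v d_1^h x \neq x\) in general. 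Thus ``composing a front/back face with a shuffle degeneracy kills every term except the one from the trivial shuffle'' is false at the unnormalized level. The repair is routine and costs you nothing: either run the comparison on normalized complexes, where \(f \circ g = \id\) genuinely holds, and connect to the unnormalized complexes at the end via the natural quasi-isomorphisms \(C \to N\); or simply apply your own acyclic-models argument to \(f \circ g\) as well---it is a natural endomorphism of \(\Tot(C(-))\) agreeing with the identity in degree \(0\), and \(\Tot(C(-))\) is free and acyclic on the models, so \(f \circ g \simeq \id\) naturally, which is all the theorem requires.
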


Eventually we will want to use the full strength of \Cref{thm:EZ_general}---indeed, it will be central to many of the arguments and computations in later sections. For now, though, we are concerned with a special case which is perhaps more immediately suggestive of its connection to the topological Eilenberg--Zilber theorem. Here, \(\Ch(\ab{A})\) denotes the category of chain complexes in \(\ab{A}\), and \(\otimes_{\Ch(\ab{A})}\) denotes the standard tensor product of chain complexes.

\begin{cor}\label{cor:EZ_pw}
Let \(\ab{A}\) be a symmetric monoidal abelian category. For every \(A, A' \in [\Delta^\op, \ab{A}]\) there is an isomorphism
\[
H_\bullet C(A \otimes_\pw A') \cong H_\bullet (C(A) \otimes_{\Ch(\ab{A})} C(A'))
\]
natural in \(A\) and \(A'\). Moreover, there exist natural chain maps
\[C(A \otimes_\pw A') \rightleftarrows C(A) \otimes_{\Ch(\ab{A})} C(A')\]
realizing this isomorphism on homology.
\end{cor}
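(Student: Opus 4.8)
The plan is to deduce \Cref{cor:EZ_pw} from the bisimplicial Eilenberg--Zilber theorem (\Cref{thm:EZ_general}) by manufacturing an appropriate bisimplicial object from the pair \(A, A'\). Given simplicial objects \(A\) and \(A'\) in \(\ab{A}\), I would form the bisimplicial object \(B\) defined in bidegree \((p,q)\) by \(B_{p,q} = A_p \otimes A'_q\), with the horizontal simplicial structure inherited from \(A\) and the vertical from \(A'\) (each tensored with an identity). The key observation linking this to the statement is that the two standard constructions applied to \(B\) recover the two sides of the desired isomorphism: first, the diagonal \(\diag(B)\) has \(\diag(B)_n = A_n \otimes A'_n\), which is precisely \((A \otimes_\pw A')_n\), so \(C(\diag(B)) = C(A \otimes_\pw A')\); second, the associated double complex \(CB\) has \((CB)_{p,q} = A_p \otimes A'_q\) with differentials the alternating sums of horizontal and vertical faces, which is exactly the double complex whose total complex is \(C(A) \otimes_{\Ch(\ab{A})} C(A')\) by definition of the tensor product of chain complexes.

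With these two identifications in hand, \Cref{thm:EZ_general} applied to \(B\) immediately yields a natural isomorphism \(H_\bullet N(\diag B) \cong H_\bullet \Tot(CB)\), realized by natural chain maps \(C(\diag B) \rightleftarrows \Tot(CB)\). To reach the stated conclusion I then need to replace \(N(\diag B)\) by \(C(\diag B)\) on the homology level: this is exactly the natural quasi-isomorphism \(H_\bullet(N(B)) \cong H_\bullet(C(B))\) between the normalized and unnormalized complexes recorded in the discussion following \Cref{def:mag_nerve} (via Weibel's Theorem 8.3.8). Composing this quasi-isomorphism with the Eilenberg--Zilber isomorphism gives \(H_\bullet C(A \otimes_\pw A') \cong H_\bullet \Tot(CB) = H_\bullet(C(A) \otimes_{\Ch(\ab{A})} C(A'))\), and composing the shuffle and Alexander--Whitney maps with the normalization quasi-isomorphism supplies the required natural chain maps realizing it.

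The main obstacle is not any single deep step but rather the careful bookkeeping needed to verify that the two identifications are genuinely identities (or at least canonical isomorphisms) of chain complexes, and in particular that the signs match. The subtlety is that \(\Tot(CB)\) carries the usual Koszul sign convention on its vertical differential, and one must check that this agrees with the sign convention built into \(\otimes_{\Ch(\ab{A})}\); this is a standard compatibility but worth confirming. I would also need to note that the construction of \(B\) from \(A, A'\) is functorial in both variables, so that every map produced is natural in \(A\) and \(A'\)---this follows because \(\otimes\) is a bifunctor and all the constructions (\(\diag\), \(C\), \(N\), \(\Tot\)) are functorial, so naturality is inherited directly from the naturality clause of \Cref{thm:EZ_general} together with the naturality of the normalization quasi-isomorphism.

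Finally, I would remark that no closedness or symmetry hypothesis beyond what is stated is needed for this corollary: the result uses only that \(\ab{A}\) is abelian and monoidal so that \(\otimes_\pw\) and the double complex are defined, with the Eilenberg--Zilber machinery doing all the homological work.
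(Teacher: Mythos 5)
Your proposal is correct and follows essentially the same route as the paper: the paper also forms the bisimplicial object \(A \boxtimes A'\) with \((A \boxtimes A')_{nm} = A_n \otimes A'_m\), identifies its diagonal with \(A \otimes_\pw A'\) and the total complex of its associated double complex with \(C(A) \otimes_{\Ch(\ab{A})} C(A')\), and then applies \Cref{thm:EZ_general} together with the natural quasi-isomorphism between normalized and unnormalized complexes. Your attention to the Koszul sign check is a point the paper compresses into ``on comparing differentials one sees,'' so nothing is missing.
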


\begin{proof}
Given any two simplicial objects \(A\) and \(A'\) in \(\ab{A}\), the composite
\[\Delta^\op \times \Delta^\op \xto{A \times A'} \ab{A} \times \ab{A} \xto{\otimes} \ab{A}.\]
is a bisimplicial object. Denote it by \(A \boxtimes A'\); so \((A \boxtimes A')_{nm} = A_n \otimes A'_m\), and the vertical face and degeneracy operators are those of \(A\), while the horizontal operators are those of \(A'\). Taking the diagonal of this bisimplicial object yields the pointwise tensor product of the functors \(A\) and \(A'\): 
\[A \otimes_\pw A' = \diag(A \boxtimes A').\]
Meanwhile, the total complex of \(C(A \boxtimes A')\) is given in degree \(n\) by
\[\Tot_nC(A \boxtimes A') = \bigoplus_{j+k=n} A_j \otimes A'_k\]
and on comparing differentials one sees there is a chain complex isomorphism
\[\Tot\: C(A \boxtimes A') \cong C(A) \otimes_{\Ch(\ab{A})} C(A').\]
The result now follows from \Cref{thm:EZ_general}, taking \(B = A \boxtimes A'\) and making use of the fact that the normalized and unnormalized chain complexes of any simplicial object in \(\ab{A}\) are naturally quasi-isomorphic (Theorem 8.3.8 in \cite{WeibelIntroduction1994}).
\end{proof}

Combining \Cref{cor:EZ_pw} with our \Cref{prop:MB_monoid} delivers the Eilenberg--Zilber theorem for the magnitude chain complex, as follows.

\begin{thm}[Eilenberg--Zilber theorem for magnitude chains]\label{thm:EZ_magnitude}
Let \(\cf{V}\) be a semicartesian category, let \(\ab{A}\) be a symmetric monoidal closed abelian category and let \(\Sigma: \cf{V} \to \ab{A}\) be a strong symmetric monoidal functor. Given any two \(\cf{V}\)-categories \(\cl{X}\) and \(\cl{Y}\) there exist chain maps
\[\widetilde{MC}^\Sigma(\cl{X} \otimes \cl{Y}) \rightleftarrows \widetilde{MC}^\Sigma(\cl{X}) \otimes_{\Ch(\ab{A})} \widetilde{MC}^\Sigma(\cl{Y}) \]
natural in \(\cl{X}\) and \(\cl{Y}\), inducing an isomorphism on homology:
\begin{equation}\label{EZ_magnitude}
MH_\bullet^\Sigma(\cl{X} \otimes \cl{Y}) \cong H_\bullet\left(\widetilde{MC}^\Sigma(\cl{X}) \otimes_{\Ch(\ab{A})} \widetilde{MC}^\Sigma(\cl{Y})\right).
\end{equation}
\end{thm}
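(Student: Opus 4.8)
The plan is to combine the two inputs already in hand: the monoidal structure on the magnitude nerve (\Cref{prop:MB_monoid}) and the pointwise Eilenberg--Zilber theorem (\Cref{cor:EZ_pw}). The former lets us identify the magnitude nerve of a tensor product with the pointwise tensor product of magnitude nerves; the latter then converts that pointwise tensor product into a genuine tensor product of chain complexes, up to natural quasi-isomorphism. Essentially all of the substantive work has already been done, so the argument is a matter of assembling these pieces and tracking naturality.

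First I would apply the functor \(C(-)\) to the isomorphism of simplicial objects \(\Psi_{\cl{X},\cl{Y}}: MB^\Sigma(\cl{X}) \otimes_\pw MB^\Sigma(\cl{Y}) \xto{\sim} MB^\Sigma(\cl{X} \otimes \cl{Y})\) supplied by \Cref{prop:MB_monoid}. Since \(C(-)\) is a functor from simplicial objects to chain complexes, \(C(\Psi_{\cl{X},\cl{Y}})\) is an isomorphism of chain complexes
\[C\!\left(MB^\Sigma(\cl{X}) \otimes_\pw MB^\Sigma(\cl{Y})\right) \xto{\sim} C\!\left(MB^\Sigma(\cl{X} \otimes \cl{Y})\right) = \widetilde{MC}^\Sigma(\cl{X} \otimes \cl{Y}).\]

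Next I would invoke \Cref{cor:EZ_pw} with \(A = MB^\Sigma(\cl{X})\) and \(A' = MB^\Sigma(\cl{Y})\). Recalling that \(\widetilde{MC}^\Sigma(\cl{X}) = C(MB^\Sigma(\cl{X}))\), this provides natural chain maps
\[C\!\left(MB^\Sigma(\cl{X}) \otimes_\pw MB^\Sigma(\cl{Y})\right) \rightleftarrows \widetilde{MC}^\Sigma(\cl{X}) \otimes_{\Ch(\ab{A})} \widetilde{MC}^\Sigma(\cl{Y})\]
inducing an isomorphism on homology. Composing these with \(C(\Psi_{\cl{X},\cl{Y}})\) and its inverse yields the desired pair of chain maps \(\widetilde{MC}^\Sigma(\cl{X} \otimes \cl{Y}) \rightleftarrows \widetilde{MC}^\Sigma(\cl{X}) \otimes_{\Ch(\ab{A})} \widetilde{MC}^\Sigma(\cl{Y})\). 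Since each of \(C(\Psi_{\cl{X},\cl{Y}})\) and the Eilenberg--Zilber maps induces an isomorphism on homology, so does the composite, giving \eqref{EZ_magnitude}.

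The only remaining point is naturality in \(\cl{X}\) and \(\cl{Y}\), and this is where whatever care is required will be concentrated, though no genuine difficulty arises. The family \(\Psi\) is natural by virtue of being the structure isomorphism of a strong monoidal functor (\Cref{prop:MB_monoid}), and the chain maps of \Cref{cor:EZ_pw} are natural in \(A\) and \(A'\). Since \(MB^\Sigma\) is itself functorial, any pair of \(\cf{V}\)-functors \(\cl{X} \to \cl{X}'\) and \(\cl{Y} \to \cl{Y}'\) induces maps of simplicial objects to which both forms of naturality apply; chasing these through the composite shows that the resulting chain maps are natural in \(\cl{X}\) and \(\cl{Y}\). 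I expect this bookkeeping to be the most laborious step, but it is entirely routine given that the real content lives in \Cref{prop:MB_monoid} and \Cref{cor:EZ_pw}.
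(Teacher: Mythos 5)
Your proposal is correct and follows essentially the same route as the paper's own proof: apply \Cref{prop:MB_monoid} to identify \(MB^\Sigma(\cl{X} \otimes \cl{Y})\) with \(MB^\Sigma(\cl{X}) \otimes_\pw MB^\Sigma(\cl{Y})\), take unnormalized chain complexes, and compose with the natural chain maps from \Cref{cor:EZ_pw}. The naturality bookkeeping you flag is handled in the paper exactly as you describe, by citing the naturality built into both ingredients.
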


\begin{proof}
The strong monoidal structure of the magnitude nerve (\Cref{prop:MB_monoid}) supplies an isomorphism
\[MB^\Sigma(\cl{X} \otimes \cl{Y}) \cong MB^\Sigma(\cl{X}) \otimes_\pw MB^\Sigma(\cl{Y}),\]
natural in \(\cl{X}\) and \(\cl{Y}\). Taking the unnormalized chain complex on both sides and applying \Cref{cor:EZ_pw} on the right yields natural chain maps
\[\widetilde{MC}^\Sigma(\cl{X} \otimes \cl{Y}) \rightleftarrows C(MB^\Sigma(\cl{X}) \otimes_\pw MB^\Sigma(\cl{Y})) \rightleftarrows \widetilde{MC}^\Sigma(\cl{X}) \otimes_{\Ch(\ab{A})} \widetilde{MC}^\Sigma(\cl{Y}) \]
inducing the claimed isomorphism on homology.
\end{proof}

To relate the right hand side of the isomorphism in (\ref{EZ_magnitude}) to a tensor product of homologies requires an algebraic K\"unneth theorem. We will employ it in the following form---proved, for instance, as Theorem 2.4 in May \cite{MayTorExt}.

\begin{thm}\label{thm:Kunneth_A}
Let \(R\) be a principal ideal domain and \(\ab{A}=\cn{Mod}_R\) its category of modules. Let \(C \in \Ch(\ab{A})\) be a chain complex of flat modules. Then given any other complex \(D \in \Ch(\ab{A})\), for each \(n \in \bb{N}\) there is a short exact sequence
\begin{align*}
0 \to \bigoplus_k  H_k(C) \otimes_R H_{n-k}(D) &\to H_n\left(C \otimes_{\Ch(\ab{A})} D\right) \\
&\to \bigoplus_k \Tor\left(H_k(C), H_{n-k-1}(D)\right) \to 0
\end{align*}
natural in \(C\) and \(D\). The sequence splits, but the splitting is not natural. \qed
\end{thm}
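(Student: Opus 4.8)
The plan is to deduce the sequence from the long exact homology sequence of a short exact sequence of complexes built from the cycles and boundaries of $C$. Write $Z_k = \ker(\partial_k) \subseteq C_k$ and $B_k = \im(\partial_{k+1}) \subseteq Z_k$ for the cycles and boundaries. Here the hypotheses are tailor-made: over a principal ideal domain a module is flat precisely when it is torsion-free, and a submodule of a torsion-free module is again torsion-free, so the flatness of each $C_k$ passes automatically to every $Z_k$ and $B_k$. Viewing $Z$ and the shifted boundary complex $B'$ (with $B'_k = B_{k-1}$) as complexes carrying the zero differential, the degreewise extensions $0 \to Z_k \to C_k \xto{\partial} B_{k-1} \to 0$ assemble into a short exact sequence of complexes $0 \to Z \to C \to B' \to 0$.

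First I would tensor this sequence with $D$. Since each $B'_k$ is flat, applying $- \otimes_R D_m$ preserves exactness in every bidegree (the obstruction $\Tor(B'_k, D_m)$ vanishes), so the sequence of tensor-product complexes
\[
0 \to Z \otimes_R D \to C \otimes_R D \to B' \otimes_R D \to 0
\]
remains exact. Because $Z$ and $B'$ carry the zero differential and consist of flat modules, the functors $Z_k \otimes_R -$ and $B_k \otimes_R -$ are exact, and the homology of the two outer complexes is computed immediately: $H_n(Z \otimes_R D) \cong \bigoplus_k Z_k \otimes_R H_{n-k}(D)$, and similarly $H_n(B' \otimes_R D) \cong \bigoplus_k B_{k-1} \otimes_R H_{n-k}(D)$.

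The crux is then the long exact homology sequence of this short exact sequence, together with the identification of its connecting homomorphism. I would verify that $\delta\colon H_n(B' \otimes_R D) \to H_{n-1}(Z \otimes_R D)$ is, summand by summand, the map induced by the inclusion $B_{k-1} \hookrightarrow Z_{k-1}$ tensored with the identity on $H(D)$. Granting this, the short exact sequence $0 \to B_k \to Z_k \to H_k(C) \to 0$ gives a long exact $\Tor$-sequence whose relevant portion, using that $\Tor(Z_k, -) = 0$ by flatness, reads
\[
0 \to \Tor(H_k(C), H_m(D)) \to B_k \otimes_R H_m(D) \to Z_k \otimes_R H_m(D) \to H_k(C) \otimes_R H_m(D) \to 0.
\]
This exhibits the kernel of each summand of $\delta$ as a $\Tor$-term and its cokernel as a tensor term; feeding kernel and cokernel back into the long exact sequence, and reindexing, produces exactly the asserted short exact sequence, with naturality in $C$ and $D$ inherited at every stage from the naturality of the constructions used.

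What remains --- and what I expect to be the genuine obstacle --- is the splitting, the only non-formal and non-natural ingredient. Here one must produce a splitting of the Künneth sequence as a sequence of $R$-modules; this is where the principal-ideal-domain hypothesis is indispensable and where naturality genuinely breaks down. The standard route splits the cycle--boundary extensions $0 \to Z_k \to C_k \to B_{k-1} \to 0$ --- possible, for instance, whenever the boundary modules are projective --- and transports the resulting decomposition of $C$ through the argument, the general flat case being treated by May \cite{MayTorExt}. Because any such splitting rests on non-canonical choices, it cannot be compatible with all chain maps at once, which is precisely why it is not natural; pinning down this failure carefully, and checking that the connecting map really is the boundary inclusion rather than some correction of it, are the two points I would treat with the most caution.
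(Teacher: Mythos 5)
First, a point of orientation: the paper does not prove this theorem at all. It is imported from the literature---``proved, for instance, as Theorem 2.4 in May''---and stated with a terminal \(\square\). So there is no in-paper argument to compare against; the right comparison is with the standard proof in the cited sources, and your argument for the exact sequence \emph{is} that proof. The reduction to flatness of \(Z_k\) and \(B_k\) via torsion-freeness over a PID, the short exact sequence of complexes \(0 \to Z \to C \to B' \to 0\), its preservation under \(- \otimes_R D\) because \(B'\) is flat, the computation of the homology of the outer (zero-differential, flat) complexes, the identification of the connecting map as the inclusion \(B_{k-1} \hookrightarrow Z_{k-1}\) tensored with the identity on \(H_\bullet(D)\) (your lift-and-differentiate check works because classes in \(H_n(B' \otimes_R D)\) are represented by sums \(b \otimes y\) with \(\partial y = 0\)), and the four-term \(\Tor\) sequence of \(0 \to B_k \to Z_k \to H_k(C) \to 0\): all of this is correct and complete in outline.

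The genuine weak point is the splitting, and the problem is more substantive than your hedging suggests. The ``standard route'' you describe---split the cycle--boundary sequences of \(C\) and transport that decomposition through the argument---does not suffice when \(D\) is arbitrary, even if every \(C_k\) is free. Splitting \(0 \to Z_k \to C_k \to B_{k-1} \to 0\) yields a chain map \(r: C \to H(C)\) (target given zero differentials) inducing the identity on homology, hence a map \(H_n(C \otimes_{\Ch(\ab{A})} D) \to H_n(H(C) \otimes_{\Ch(\ab{A})} D)\); but the modules \(H_k(C)\) need not be flat and the complex \(H(C) \otimes D\) does \emph{not} have zero differential, so its homology is not \(\bigoplus_k H_k(C) \otimes_R H_{n-k}(D)\), and no retraction of the K\"unneth injection results. (In the universal-coefficient special case, where \(D\) is concentrated in degree \(0\), the target does have zero differential---that is why the one-sided argument suffices there, and it is easy to conflate the two situations.) The argument that actually works replaces \emph{both} complexes by quasi-isomorphic bounded-below complexes of free modules, \(F \xto{\sim} C\) and \(G \xto{\sim} D\): one checks that \(F \otimes G \to C \otimes D\) is a quasi-isomorphism (standard cone/filtration arguments, using that the complexes are bounded below and that \(F\) and \(C\) are flat), so by naturality the two K\"unneth sequences are isomorphic; and for \(F \otimes G\) the retraction argument runs on both sides, since over a PID submodules of free modules are free: the chain map \(F \otimes G \to H(F) \otimes H(G)\) lands in a complex with zero differential and retracts the K\"unneth injection. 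The non-naturality of the splitting is then precisely the non-canonicity of the chosen sections. Since the paper itself delegates the entire theorem to May, your deferral of this step leaves your write-up no less complete than the paper; but as a self-contained proof it is missing this two-sided approximation, and the one-sided transport you sketch would not close the gap.
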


Combining \Cref{thm:Kunneth_A} with \Cref{thm:EZ_magnitude} establishes the K\"unneth theorem for magnitude homology. As \Cref{thm:Kunneth_A} is stated for categories of modules over a principal ideal domain, that is the context in which our theorem holds.

\begin{thm}[K\"unneth theorem for magnitude homology]\label{thm:MH_Kunneth}
Let \(\cf{V}\) be a semicartesian category and \(R\) a principal ideal domain, and let \(\Sigma: \cf{V} \to \cn{Mod}_R\) be a strong symmetric monoidal functor. Suppose \(\cl{X}\) is a \(\cf{V}\)-category with the property that, for every \(n \in \bb{N}\), the \(R\)-module \({MC}_n^\Sigma(\cl{X})\) is flat. Then given any \(\cf{V}\)-category \(\cl{Y}\), for each \(n \in \bb{N}\) there is a short exact sequence
\begin{align*}
0 \to \bigoplus_k MH_k^\Sigma(\cl{X}) \otimes_{R} MH_{n-k}^\Sigma(\cl{Y}) &\to MH_n^\Sigma(\cl{X} \otimes \cl{Y}) \\
&\to \bigoplus_{k} \Tor\left(MH_k^\Sigma(\cl{X}), MH_{n-k-1}^\Sigma(\cl{Y})\right) \to 0
\end{align*}
natural in \(\cl{X}\) and \(\cl{Y}\). The sequence splits, but the splitting is not natural.
\end{thm}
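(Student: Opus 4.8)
The plan is to combine the two ingredients that have just been assembled. \Cref{thm:EZ_magnitude} gives, for any two \(\cf{V}\)-categories \(\cl{X}\) and \(\cl{Y}\), a natural isomorphism
\[
MH_\bullet^\Sigma(\cl{X} \otimes \cl{Y}) \cong H_\bullet\left(\widetilde{MC}^\Sigma(\cl{X}) \otimes_{\Ch(\ab{A})} \widetilde{MC}^\Sigma(\cl{Y})\right),
\]
and \Cref{thm:Kunneth_A} is precisely the algebraic tool needed to unpack the right hand side into a short exact sequence involving \(\Tor\). So the strategy is simply to apply \Cref{thm:Kunneth_A} with \(C = \widetilde{MC}^\Sigma(\cl{X})\) and \(D = \widetilde{MC}^\Sigma(\cl{Y})\), and then transport the resulting sequence across the Eilenberg--Zilber isomorphism.

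First I would verify the flatness hypothesis of \Cref{thm:Kunneth_A}. That theorem requires the first complex to consist of flat modules; the statement of the present theorem assumes flatness of the \emph{normalized} modules \(MC_n^\Sigma(\cl{X})\), so I must explain why this is equivalent to flatness of the unnormalized modules \(\widetilde{MC}_n^\Sigma(\cl{X})\). The point is that the unnormalized complex splits, in each degree, as a direct sum of the normalized part and the degenerate part \(D_n\), and the degenerate part is itself a sum of copies of lower normalized modules via the degeneracy maps; hence flatness of every \(MC_n^\Sigma(\cl{X})\) forces flatness of every \(\widetilde{MC}_n^\Sigma(\cl{X})\). Since a direct summand of a flat module is flat, the two hypotheses are equivalent, and the unnormalized complex satisfies the condition required by \Cref{thm:Kunneth_A}.

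With flatness in hand, applying \Cref{thm:Kunneth_A} yields a natural, non-naturally-split short exact sequence
\begin{align*}
0 \to \bigoplus_k H_k(\widetilde{MC}^\Sigma(\cl{X})) \otimes_R H_{n-k}(\widetilde{MC}^\Sigma(\cl{Y})) &\to H_n\left(\widetilde{MC}^\Sigma(\cl{X}) \otimes_{\Ch(\ab{A})} \widetilde{MC}^\Sigma(\cl{Y})\right) \\
&\to \bigoplus_k \Tor\left(H_k(\widetilde{MC}^\Sigma(\cl{X})), H_{n-k-1}(\widetilde{MC}^\Sigma(\cl{Y}))\right) \to 0.
\end{align*}
By \Cref{def:MC_MH} the homology of the unnormalized complex is exactly \(MH_\bullet^\Sigma\), so the outer terms become the stated sums of tensor and \(\Tor\) products of magnitude homology groups, while the naturality of the Eilenberg--Zilber isomorphism in \Cref{thm:EZ_magnitude} lets me replace the middle term by \(MH_n^\Sigma(\cl{X} \otimes \cl{Y})\). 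Naturality of the whole sequence in \(\cl{X}\) and \(\cl{Y}\) follows from the naturality asserted in both \Cref{thm:EZ_magnitude} and \Cref{thm:Kunneth_A}, and the splitting (non-natural) is inherited directly from \Cref{thm:Kunneth_A}.

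I do not anticipate a serious obstacle here, since the heavy lifting was done in proving \Cref{prop:MB_monoid} and deducing \Cref{thm:EZ_magnitude}; the present result is essentially a formal assembly. The one point requiring genuine care is the flatness reduction in the second step—making sure that the hypothesis on the normalized modules really does transfer to the unnormalized complex that \Cref{thm:Kunneth_A} is applied to. I would take care to state explicitly that the unnormalized complex is what feeds into the algebraic K\"unneth theorem, and that its flatness is guaranteed by the splitting \(\widetilde{MC}_n^\Sigma(\cl{X}) \cong MC_n^\Sigma(\cl{X}) \oplus D_n\) together with the description of \(D_n\) in terms of degeneracies.
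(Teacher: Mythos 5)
Your proposal is correct and follows essentially the same route as the paper: apply the algebraic K\"unneth theorem (\Cref{thm:Kunneth_A}) with \(C = \widetilde{MC}^\Sigma(\cl{X})\) and \(D = \widetilde{MC}^\Sigma(\cl{Y})\), then identify the middle term via the Eilenberg--Zilber isomorphism of \Cref{thm:EZ_magnitude}. You are in fact more careful than the paper on one point: the theorem's flatness hypothesis concerns the normalized modules \(MC_n^\Sigma(\cl{X})\) while the K\"unneth theorem is applied to the unnormalized complex, and your reduction via the splitting \(\widetilde{MC}_n^\Sigma(\cl{X}) \cong MC_n^\Sigma(\cl{X}) \oplus D_n\), with \(D_n\) a direct sum of copies of lower-degree normalized modules, correctly supplies this step, which the paper's proof leaves implicit.
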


\begin{proof}
\Cref{thm:Kunneth_A} says there is a natural short exact sequence
\begin{align*}
0 \to \bigoplus_k MH_k^\Sigma(\cl{X}) \otimes_R MH_{n-k}^\Sigma(\cl{Y}) & \to H_n\left(\widetilde{MC}^\Sigma(\cl{X}) \otimes_{\Ch(\cn{Mod}_R)} \widetilde{MC}^\Sigma(\cl{Y})\right) \\
&\to \bigoplus_{k} \Tor\left(MH_k^\Sigma(\cl{X}), MH_{n-k-1}^\Sigma(\cl{Y})\right) \to 0
\end{align*}
which splits. \Cref{thm:EZ_magnitude} tells us the middle term is naturally isomorphic to \(MH_n^\Sigma(\cl{X} \otimes \cl{Y})\), and this gives the result.
\end{proof}

Because the usual choice of coefficient functor \(\Sigma^*\) on \([0, \infty]\) is not valued in a category of modules over a ring but in a category of graded modules, \Cref{thm:MH_Kunneth} does not immediately specialize to metric spaces. However, the Eilenberg--Zilber theorem for magnitude chains still applies in this setting, providing a K\"unneth formula for generalized metric spaces, as follows. In the case that \(X\) and \(Y\) are classical metric spaces (in particular, symmetric), \Cref{thm:MH_Kunneth_metric} recovers Proposition 4.3 of \cite{BottinelliKaiser2021}, itself a generalization of Theorem 5.1 in \cite{HepworthWillerton2017} (the K\"unneth formula for undirected graphs).

\begin{thm}\label{thm:MH_Kunneth_metric}
Let \(X\) be a generalized metric space. Given any other generalized metric space \(Y\), there is  in each length grading \(0 \leq \ell < \infty\) a short exact sequence
\begin{align*}
0 \to \bigoplus_{\substack{r+s = \ell \\ j+k = n}} MH_j^r ({X}) \otimes_\bb{Z} MH_k^s({Y}) & \to MH_{n}^{\ell}(X \otimes Y) \\
&\to \bigoplus_{\substack{r+s = \ell \\ j+k = n}} \Tor\left(MH_{j}^{r}({X}), MH_{k-1}^{s}({Y})\right) \to 0
\end{align*}
natural in \(X\) and \(Y\). The sequence splits, but the splitting is not natural.
\end{thm}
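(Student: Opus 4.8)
The plan is to run the same argument as for \Cref{thm:MH_Kunneth}, but to work inside the graded abelian category \(\cn{Ab}^{[0,\infty)}\) and to decouple the length grading from the homological machinery. Since \((\cn{Ab}^{[0,\infty)}, \ost, \bb{I})\) is a symmetric monoidal closed abelian category and \(\Sigma^*\) is strong symmetric monoidal, \Cref{thm:EZ_magnitude} applies verbatim with \(\ab{A} = \cn{Ab}^{[0,\infty)}\). It furnishes natural chain maps between \(\widetilde{MC}^{\Sigma^*}(X \otimes Y)\) and \(\widetilde{MC}^{\Sigma^*}(X) \otimes_{\Ch(\cn{Ab}^{[0,\infty)})} \widetilde{MC}^{\Sigma^*}(Y)\) inducing an isomorphism on homology. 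Crucially, these are morphisms in \(\Ch(\cn{Ab}^{[0,\infty)})\), so they respect the length grading, and the induced isomorphism therefore holds separately in each length grading \(\ell\).

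First I would unwind the convolution product to identify the length-\(\ell\) component of the right-hand complex. By \Cref{def:AbR}, the degree-\(n\), length-\(\ell\) part of \(\widetilde{MC}^{\Sigma^*}(X) \otimes_{\Ch(\cn{Ab}^{[0,\infty)})} \widetilde{MC}^{\Sigma^*}(Y)\) is \(\bigoplus_{j+k=n} \bigoplus_{r+s=\ell} \widetilde{MC}_j^r(X) \otimes_\bb{Z} \widetilde{MC}_k^s(Y)\). Holding \(\ell\) fixed, this assembles into an isomorphism of chain complexes of abelian groups
\[\left(\widetilde{MC}^{\Sigma^*}(X) \otimes_{\Ch(\cn{Ab}^{[0,\infty)})} \widetilde{MC}^{\Sigma^*}(Y)\right)^\ell \cong \bigoplus_{r+s=\ell} \left(\widetilde{MC}^r(X) \otimes_{\Ch(\cn{Ab})} \widetilde{MC}^s(Y)\right),\]
a direct sum, over the (typically infinite) set of pairs \((r,s)\) for which both factors are nonzero, of ordinary tensor products of \(\bb{Z}\)-complexes.

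Next I would apply the algebraic K\"unneth theorem over \(\bb{Z}\). Each unnormalized magnitude chain group \(\widetilde{MC}_k^s(X)\) is free abelian---it is freely generated by the tuples of the appropriate length---hence flat, so \Cref{thm:Kunneth_A} with \(R=\bb{Z}\) yields, for each pair \((r,s)\), a natural short exact sequence computing \(H_n(\widetilde{MC}^r(X) \otimes_{\Ch(\cn{Ab})} \widetilde{MC}^s(Y))\) with outer terms \(\bigoplus_j MH_j^r(X) \otimes_\bb{Z} MH_{n-j}^s(Y)\) and \(\bigoplus_j \Tor(MH_j^r(X), MH_{n-j-1}^s(Y))\), using that the homology of the unnormalized complex computes \(MH_\bullet\). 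Taking the direct sum over \(r+s=\ell\), and using that \(H_\bullet\), \(\otimes_\bb{Z}\) and \(\Tor\) all commute with direct sums---so that a direct sum of short exact sequences is again exact---produces a short exact sequence whose middle term is \(MH_n^\ell(X \otimes Y)\) by the grading-wise Eilenberg--Zilber isomorphism of the first paragraph. This is precisely the claimed sequence, and its non-natural splitting is inherited summand-by-summand from \Cref{thm:Kunneth_A}.

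I do not expect a genuine obstacle here: the content is carried entirely by \Cref{thm:EZ_magnitude} and \Cref{thm:Kunneth_A}, and the remaining work is bookkeeping. The one point demanding care is the interaction of the two gradings---confirming that the Eilenberg--Zilber chain maps preserve length grading (which they do, being \(\cn{Ab}^{[0,\infty)}\)-morphisms) and that the convolution product correctly distributes the grading \(\ell\) over the homological tensor product as a direct sum to which \Cref{thm:Kunneth_A} may be applied termwise.
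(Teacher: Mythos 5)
Your proposal is correct and follows the same route as the paper: apply \Cref{thm:EZ_magnitude} with \(\ab{A} = \cn{Ab}^{[0,\infty)}\), note that the unnormalized magnitude chain groups are free (hence flat) in each length grading, and then apply \Cref{thm:Kunneth_A} grading-wise. The only difference is that you make explicit the bookkeeping the paper compresses into the phrase ``grading-wise''---namely, the decomposition of the length-\(\ell\) part of the convolution tensor product into \(\bigoplus_{r+s=\ell}\bigl(\widetilde{MC}^r(X) \otimes_{\Ch(\cn{Ab})} \widetilde{MC}^s(Y)\bigr)\) and the compatibility of homology, \(\otimes_{\bb{Z}}\) and \(\Tor\) with direct sums.
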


\begin{proof}
\Cref{thm:EZ_magnitude} tells us that for spaces \(X\) and \(Y\) there is a natural isomorphism of bigraded abelian groups
\begin{equation}\label{EZ_metric}
MH_{\bullet}^*({X} \otimes {Y}) \cong H_{\bullet}\left(\widetilde{MC}({X}) \ost_{\Ch} \widetilde{MC}({Y})\right)
\end{equation}
where \(\ost_{\Ch}\) denotes the tensor product of chain complexes in \(\cn{Ab}^{[0, \infty)}\). The magnitude complex of any generalized metric space is free, and hence flat, in every length grading. We can therefore apply the algebraic K\"unneth theorem grading-wise on the right of (\ref{EZ_metric}) to obtain the result.
\end{proof}

%%%%%%%%%%%%%%%%%%%%%%%%%%%%%%%%%%%%%%%%%%%%%%%%%%%%%

\section{Iterated magnitude homology}\label{sec:iterated_MH}

We turn now to the main theme of the paper, which is to construct a modified `magnitude homology' theory applicable to categories with a second- or higher-order enrichment---categories enriched in some category of \(\cf{V}\)-categories---where a coefficient functor on \(\cf{V}\) has already been chosen. Our template for this construction will be the  \emph{classifying space} \(\bb{B}\cl{X}\) of a bicategory \(\cl{X}\).

In modern texts, the {classifying space} of a bicategory is usually defined to be the geometric realization of its \emph{geometric nerve}: the simplicial set \(S(\cl{X})\) specified by
\[S(\cl{X})_n = \cn{Bicat}([n], \cl{X})\]
where \(\cn{Bicat}\) is the 1-category of bicategories whose morphisms are lax 2-functors that preserve identities strictly. This definition is found implicitly in Street \cite{StreetAlgebra1987} in 1987 and given its succinct description by Duskin \cite{DuskinNerve2002} in 2002.

This construction has the virtue that it evidently extends the ordinary classifying space functor along the inclusion of small categories into bicategories. When it comes to strict 2-categories, though, there is at least one other way one might think about constructing something like a classifying space. Segal \cite{SegalClassifying1968} observed in 1968 that the nerve \(N\cl{C}\) of any topological category \(\cl{C}\) (i.e.~any category internal to \(\cn{Top}\)) naturally carries the structure of a simplicial space, and he defined the classifying space of a topological category to be the geometric realization of that simplicial space, \(|N\cl{C}|\). Meanwhile, if \(\cl{X}\) is a strict 2-category, then applying the {ordinary} classifying space functor {locally}---that is, to each hom-category in \(\cl{X}\)---transforms \(\cl{X}\) into a topological category \(\bb{B}_*\cl{X}\) with discrete space of objects. Segal's construction then gives an alternative `classifying space' functor
\[|N\bb{B}_*(-)|: 2\cn{Cat} \to \cn{Top}\]
where \(2\cn{Cat}\) is the category of strict 2-categories.

Although the Segal construction and the Duskin--Street construction look rather different, where both are applicable they give equivalent results: Bullejos and Cegarra prove in Theorem 1 of \cite{BullejosCegarra2003} that for any strict 2-category \(\cl{X}\) there is a natural homotopy equivalence \(\bb{B}\cl{X} \simeq |N\bb{B}_*\cl{X}|\).

We are going to follow Segal's lead, using the magnitude nerve in place of the ordinary classifying space functor, to define a homology theory for categories enriched in \(\cf{V}\cn{Cat}\), where \(\cf{V}\) is any semicartesian category. First we need to verify that \(\cf{V}\cn{Cat}\) is semicartesian, provided \(\cf{V}\) is.

\begin{lem}\label{lem:VCat_semi}
Let \(\cf{V}\) be a semicartesian category. Then the monoidal structure on \(\cf{V}\cn{Cat}\) induced by that of \(\cf{V}\) is semicartesian.
\end{lem}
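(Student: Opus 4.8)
The plan is to show directly that the monoidal unit \(\cl{I}\) is terminal in \(\cf{V}\cn{Cat}\), since a symmetric monoidal category is semicartesian precisely when its unit object is terminal. Concretely, I would prove that for every \(\cf{V}\)-category \(\cl{X}\) there is exactly one \(\cf{V}\)-functor \(\cl{X} \to \cl{I}\). First I would observe that the assignment on objects is forced: because \(\cl{I}\) has a single object, every object of \(\cl{X}\) must be sent to it. It then remains to specify, for each pair \(x, x' \in \ob(\cl{X})\), a morphism \(\cl{X}(x,x') \to \cl{I}(*,*) = I\) in \(\cf{V}\). Here I would invoke the hypothesis that \(\cf{V}\) is semicartesian: since \(I\) is terminal in \(\cf{V}\), there is a unique such morphism for each pair, namely the terminal map. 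This already pins down the data of the functor, and hence establishes its uniqueness, provided these data do assemble into a valid \(\cf{V}\)-functor.

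The only thing left to check is that this collection of terminal maps satisfies the two \(\cf{V}\)-functor axioms: compatibility with composition and with identities. Both axioms assert the commutativity of diagrams in \(\cf{V}\) whose common codomain is a hom-object of \(\cl{I}\), that is, the unit \(I\). But \(I\) is terminal, so any two parallel morphisms into it coincide; the relevant diagrams therefore commute automatically. This is the step one should be careful to state explicitly, though it presents no real obstacle: the entire verification collapses to the universal property of the terminal object. (In particular, the identity axiom reduces to the fact that the only endomorphism of \(I\) is the identity.)

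I do not anticipate any genuine difficulty here. The content of the lemma is simply that terminality of the unit is inherited from \(\cf{V}\) to \(\cf{V}\cn{Cat}\); the symmetric monoidal structure on \(\cf{V}\cn{Cat}\) recalled above already supplies the tensor product and the unit \(\cl{I}\), so no new structure need be constructed. The main point deserving care is the (essentially trivial) observation that both functor axioms hold for free once every hom-object of the target is the terminal object \(I\).
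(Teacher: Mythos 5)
Your proposal is correct and takes essentially the same approach as the paper: both prove the lemma by showing the unit \(\cf{V}\)-category \(\cl{I}\) is terminal in \(\cf{V}\cn{Cat}\), with the object assignment forced and each hom-component given by the unique terminal map in \(\cf{V}\). Your explicit check that the \(\cf{V}\)-functor axioms hold automatically (since all relevant diagrams land in the terminal object \(I\)) is a verification the paper leaves implicit, and it is a welcome addition.
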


\begin{proof}
The unit for the monoidal product on \(\cf{V}\cn{Cat}\) is the one-object \(\cf{V}\)-category \(\cl{I}\) whose sole hom-object is the unit \(I\) in \(\cf{V}\). Given any \(\cf{V}\)-category \(\cl{C}\) we can define a \(\cf{V}\)-functor \(T: \cl{C} \to \cl{I}\) by sending each object in \(\cl{C}\) to the unique object in \(\cl{I}\); as \(I\) is terminal, there is for each \(a,b \in \ob (\cl{C})\) a unique map \(\cl{C}(a,b) \to \cl{I}(Ta, Tb) = I\). This determines \(T\) uniquely, so \(\cl{I}\) is terminal in \(\cf{V}\cn{Cat}\).
\end{proof}

Now let \(\ab{A}\) be a symmetric monoidal closed abelian category and \(\Sigma: \cf{V} \to \ab{A}\) a strong symmetric monoidal functor. Let \(\cl{X}\) be a category enriched in \(\cf{V}\cn{Cat}\). As the magnitude nerve functor \(MB^\Sigma: \cf{V}\cn{Cat} \to [\Delta^\op, \ab{A}]\) is strong symmetric monoidal (\Cref{prop:MB_monoid}), we can consider the magnitude nerve of \(\cl{X}\) with respect to \(MB^\Sigma\). According to \Cref{def:mag_nerve}, this is a simplicial object in \([\Delta^\op, \ab{A}]\); in other words, \(MB^{MB^{\Sigma}}(\cl{X})\) is a bisimplicial object in \(\ab{A}\). 

\begin{defn}\label{def:2MH}
The \textbf{double magnitude nerve} of a \(\cf{V}\cn{Cat}\)-category \(\cl{X}\) is 
\[MB^{MB^{\Sigma}}(\cl{X}) \in [\Delta^\op \times \Delta^\op, \ab{A}].\]
The \textbf{iterated magnitude nerve} of \(\cl{X}\) is the simplicial object in \(\ab{A}\)
\[MB^{2\Sigma}(\cl{X}) = \diag \: MB^{MB^{\Sigma}}(\cl{X}),\]
the \textbf{iterated magnitude complex} of \(\cl{X}\) is the chain complex
\[MC^{2\Sigma}(\cl{X}) = C(MB^{2\Sigma}(\cl{X})),\]
and the \textbf{iterated magnitude homology} of \(\cl{X}\) is its homology:
\[MH^{2\Sigma}_\bullet(\cl{X}) = H_\bullet (MC^{2\Sigma}(\cl{X})).\]
\end{defn}

The bisimplicial Eilenberg--Zilber theorem (\Cref{thm:EZ_general}) implies that iterated magnitude homology can equally well be realized as the total homology of the chains in the double magnitude nerve: there is a natural isomorphism
\begin{equation}\label{MH2_EZ}
MH_\bullet^{2\Sigma}(-) \cong H_\bullet \Tot\: C\left(MB^{MB^{\Sigma}}(-)\right).
\end{equation}
It will often be this description that we use in computations. However, it is the definition in terms of the iterated magnitude nerve that lets us compare the iterated magnitude homology of a strict 2-category with the homology of its classifying space.

\begin{thm}\label{thm:MH_Duskin}
Let \(\Sigma: \cn{Set} \to \cn{Ab}\) be the free abelian group functor. Then for every strict 2-category \(\cl{X}\) we have \(MH_\bullet^{2\Sigma}(\cl{X}) \cong H_\bullet(\bb{B}\cl{X})\), naturally in \(\cl{X}\).
\end{thm}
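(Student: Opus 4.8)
The plan is to reduce the statement to Bullejos and Cegarra's Theorem 1, which asserts a natural homotopy equivalence $\bb{B}\cl{X} \simeq |N\bb{B}_*\cl{X}|$ for any strict 2-category $\cl{X}$. Since homotopy equivalences induce isomorphisms on singular homology, it suffices to identify the iterated magnitude homology $MH_\bullet^{2\Sigma}(\cl{X})$ with the singular homology of $|N\bb{B}_*\cl{X}|$. The key observation is that a strict 2-category is precisely a category enriched in $(\cn{Cat}, \times, \cn{1})$, and $\cn{Cat}$ is itself $\cn{Set}\cn{Cat}$, so $\cl{X}$ is a $\cf{V}\cn{Cat}$-category for $\cf{V} = \cn{Set}$. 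With $\Sigma$ the free abelian group functor, I therefore need to unwind the two constructions---the bisimplicial one underlying $MB^{MB^\Sigma}(\cl{X})$ and the topological one $N\bb{B}_*\cl{X}$---and check they agree after passing to chains.

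The first step is to recall from \Cref{eg:categ_ordinary} that, for the free abelian group functor $\Sigma$, the magnitude complex $MC_\bullet^\Sigma(\cl{C})$ of an ordinary small category $\cl{C}$ is exactly the simplicial chain complex of the nerve $S(\cl{C})$; equivalently, $MB^\Sigma(\cl{C})$ is the free simplicial abelian group on the nerve $S(\cl{C})$. I would then identify the \emph{double} magnitude nerve $MB^{MB^\Sigma}(\cl{X})$ explicitly in each bidegree $(m,n)$. By \Cref{def:mag_nerve} applied with coefficients $MB^\Sigma$, the object $MB_m^{MB^\Sigma}(\cl{X})$ is a direct sum over $(m{+}1)$-tuples of objects of $\cl{X}$ of tensor products $MB^\Sigma\cl{X}(x_0,x_1) \otimes_\pw \cdots \otimes_\pw MB^\Sigma\cl{X}(x_{m-1},x_m)$ of simplicial abelian groups; taking its value at $[n]$ and comparing with the definition of the nerve $N\bb{B}_*\cl{X}$ of the simplicial category $\bb{B}_*\cl{X}$, one sees both are the free abelian group on the set of $n$-simplices of the nerve of the hom-categories, summed over composable $m$-chains. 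The point is that applying the classifying-space functor locally (to each hom-category) and then taking its $n$-simplices is \emph{the same data} as taking $\Sigma$ applied to $S(\cl{X}(x_{i-1},x_i))_n$; the strong monoidal structure of $MB^\Sigma$ (\Cref{prop:MB_monoid}) is exactly what guarantees that the bisimplicial structure matches, since it is what makes $\bb{B}_*\cl{X}$ into a genuine simplicial category in the first place.

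Concretely, I expect to establish a natural isomorphism of bisimplicial abelian groups
\[
MB^{MB^\Sigma}(\cl{X}) \;\cong\; \mathbb{Z}\!\left[N\bb{B}_*\cl{X}\right],
\]
where the right-hand side is the free abelian group on the bisimplicial set obtained by taking nerves locally and then globally. Taking the diagonal commutes with this free-abelian-group functor, so $MB^{2\Sigma}(\cl{X}) \cong \mathbb{Z}[\diag N\bb{B}_*\cl{X}]$, and hence
\[
MH_\bullet^{2\Sigma}(\cl{X}) = H_\bullet C\bigl(\mathbb{Z}[\diag N\bb{B}_*\cl{X}]\bigr) \cong H_\bullet\bigl(\diag N\bb{B}_*\cl{X}\bigr),
\]
the simplicial homology of the diagonal. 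Since the geometric realization of the diagonal of a bisimplicial set is naturally homeomorphic to the realization of the bisimplicial set---and by the standard fact that $|\diag B|$ computes the same homology as $|B|$---this equals $H_\bullet|N\bb{B}_*\cl{X}|$. Combined with Bullejos--Cegarra's equivalence $\bb{B}\cl{X} \simeq |N\bb{B}_*\cl{X}|$, we obtain $MH_\bullet^{2\Sigma}(\cl{X}) \cong H_\bullet(\bb{B}\cl{X})$ naturally.

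\textbf{The main obstacle} I anticipate is the careful bookkeeping in the bidegree-by-bidegree identification: one must verify not merely that the two bisimplicial abelian groups agree levelwise, but that \emph{all} the face and degeneracy operators in both simplicial directions correspond. In the $MB^{MB^\Sigma}$ construction the outer face maps use composition in $\cl{X}$ (via $\Sigma(\circ)$) and the semicartesian terminal maps, while the inner simplicial structure is inherited from $MB^\Sigma$ of each hom-category; on the $N\bb{B}_*\cl{X}$ side the outer direction records composable chains of 1-morphisms and the inner direction the nerve of hom-categories. Checking these match requires using the \emph{naturality} and \emph{monoidality} clauses of \Cref{prop:MB_monoid} precisely where the strong monoidal coherence isomorphisms $\Psi_{\cl{X},\cl{Y}}$ enter the definition of the outer face maps---this is the technical heart, and it is exactly the role that strong symmetric monoidality of the nerve plays in Segal's and Bullejos--Cegarra's original arguments. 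Everything else is formal, resting on the compatibility of free abelian groups with diagonals and with geometric realization.
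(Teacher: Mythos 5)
Your proposal is correct and follows essentially the same route as the paper: both identify the double magnitude nerve \(MB^{MB^\Sigma}(\cl{X})\) with the free abelian group on Bullejos--Cegarra's double nerve (your \(\bb{Z}[N\bb{B}_*\cl{X}]\), their \(NN\cl{X}\)), pass to the diagonal, and invoke their Theorem 1 to conclude. The paper's proof is simply terser, treating the levelwise and face/degeneracy identification you flag as the ``technical heart'' as holding by design of the construction, and citing directly that Bullejos--Cegarra's proof already establishes \(|\diag(NN\cl{X})| \simeq \bb{B}\cl{X}\), which lets it skip your intermediate steps through \(|\diag B| \cong |B|\) and the simplicial-space realization.
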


\begin{proof}
By design, the double magnitude nerve of \(\cl{X}\) with respect to \(\Sigma\) is the bisimplicial abelian group freely generated by the `double nerve' \(NN\cl{X}\) that Bullejos and Cegarra describe in \cite{BullejosCegarra2003} (pages 9--10). It is shown in the proof of their Theorem 1 that the geometric realization of \(\diag(NN\cl{X})\) is naturally homotopy equivalent to \(\bb{B}\cl{X}\). Since \(MC^{2\Sigma}_\bullet(\cl{X})\) is the Moore complex of \(MB^{2\Sigma}_\bullet(\cl{X}) = \bb{Z} \cdot \diag(NN\cl{X})\), this gives the result.
\end{proof}

%%%%%%%%%%%%%%%%%%%%%%%%%%%%%%%%%%%%%%%%%%%%%%%%%%%%%

\section{Enriched groups}\label{sec:enriched_groups}

The remainder of the paper is dedicated to investigating the information carried by iterated magnitude homology when it is interpreted in two rather different contexts. The first of these is drawn from group theory, where second-order enrichment occurs quite naturally; in this section we outline three classes of examples. The main definition is as follows.
\begin{defn}
A \textbf{\(\cf{V}\cn{Cat}\)-group} is a one-object \(\cf{V}\cn{Cat}\)-category---equivalently, a monoid object in \((\cf{V}\cn{Cat}, \otimes, \cl{I})\)---whose underlying ordinary category is a group.
\end{defn}

In other words, a \(\cf{V}\cn{Cat}\)-group is a group whose set of elements also carries the structure of a \(\cf{V}\)-category, and in which multiplication is a \(\cf{V}\)-functor. Inversion is \emph{not} required to be a \(\cf{V}\)-functor.

If the monoidal structure on \(\cf{V}\cn{Cat}\) happens to be cartesian, one can also talk about its \emph{internal group objects}, and these can be characterized as those \(\cf{V}\cn{Cat}\)-groups in which inversion is a \(\cf{V}\)-functor. If \(\cf{V}\cn{Cat}\) is not cartesian---for instance, if \(\cf{V}\cn{Cat}\) is \(\cn{GMet}\)---the absence of diagonal maps means that internal groups are not defined (see \Cref{rmk:internal_met}). Internal groups are a more widely familiar concept, well-motivated by many examples in geometry and topology, but the notion of \(\cf{V}\cn{Cat}\)-enrichment is also a natural one to consider, as our examples will illustrate. 

The study of \(\cf{V}\cn{Cat}\)-groups has recent precedent. Clementino, Martins-Ferreira and Montoli's 2019 paper \cite{ClementinoEtal2019} studies the category of preordered groups, which are groups with a second-order enrichment in \((\cn{Truth}, \wedge)\). Clementino and Montoli \cite{ClementinoMontoli2021} have since extended the investigation to groups with a second-order enrichment in any quantale, which encompasses the case of groups equipped with a translation-invariant metric (see \Cref{eg:Metgroups}, below) but not, for instance, any strict 2-group whose category of elements is not a preorder (see \Cref{eg:2groups}).
%They show that when \(\cf{V}\) is a frame, the category of \(\cf{V}\cn{Cat}\)-groups has especially strong categorical properties reminiscent of those of the ordinary category of groups.
What we call \(\cf{V}\cn{Cat}\)-groups, Clementino and Montoli call `\(\cf{V}\)-groups'---we have chosen the slightly more unwieldy term to emphasise the second-order enrichment and specifically to avoid referring to strict 2-groups as `\(\cn{Set}\)-groups'. The following statement is proved as Proposition 3.1 in \cite{ClementinoMontoli2021}.

\begin{prop}\label{prop:enrich_biinv}
Let \(\cf{V}\) be a quantale, and let \(G\) be a group whose set of elements carries the structure of a \(\cf{V}\)-category. That structure makes \(G\) a \(\cf{V}\cn{Cat}\)-group if and only if it is \emph{translation invariant}, meaning that for every \(g \in G\) the maps \(g\cdot-: G \to G\) and \(-\cdot g: G \to G\) are both \(\cf{V}\)-functors. \qed
\end{prop}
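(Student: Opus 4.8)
The plan is to exploit the standing assumption that \(\cf{V}\) is a quantale, which is exactly what makes the equivalence clean: a quantale, viewed as a category, is \emph{thin}, so there is at most one morphism between any two objects of \(\cf{V}\). Consequently a map on objects \(F \colon \ob(\cl{X}) \to \ob(\cl{Y})\) between \(\cf{V}\)-categories extends to a (necessarily unique) \(\cf{V}\)-functor precisely when, for every pair \(a,b\), there is a morphism \(\cl{X}(a,b) \to \cl{Y}(Fa,Fb)\) in \(\cf{V}\); the functoriality axioms, being equations between parallel morphisms, then hold automatically, and two \(\cf{V}\)-functors with the same object map necessarily coincide. I would use this reduction throughout. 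Writing \(m \colon G \otimes G \to G\) for the multiplication (so \(m(x,y)=xy\) on objects), the only condition distinguishing a \(\cf{V}\cn{Cat}\)-group from an arbitrary group carrying a \(\cf{V}\)-structure is that \(m\) be a \(\cf{V}\)-functor: the unit \(\cl{I} \to G\) selecting \(1 \in G\) is always a \(\cf{V}\)-functor (it is just the identity morphism \(\id_1 \colon I \to G(1,1)\)), and the associativity and unit coherence diagrams hold on objects by the group axioms, hence hold as equalities of \(\cf{V}\)-functors by thinness. So it suffices to prove that \(m\) is a \(\cf{V}\)-functor if and only if \(G\) is translation invariant.

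For the forward direction I would exhibit each translation as a composite of \(\cf{V}\)-functors. For fixed \(g \in G\), the assignment of \(g\) to the unique object of \(\cl{I}\), together with the identity morphism \(\id_g \colon I \to G(g,g)\), defines a \(\cf{V}\)-functor \(J_g \colon \cl{I} \to G\). The right translation \(-\cdot g\) is then the composite \(G \cong G \otimes \cl{I} \xrightarrow{\id_G \otimes J_g} G \otimes G \xrightarrow{m} G\), and the left translation \(g \cdot -\) is \(G \cong \cl{I} \otimes G \xrightarrow{J_g \otimes \id_G} G \otimes G \xrightarrow{m} G\). Since composites of \(\cf{V}\)-functors are \(\cf{V}\)-functors, both translations are \(\cf{V}\)-functors, which is translation invariance. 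Concretely, this amounts to precomposing the hom-object map \(G(x,x') \otimes G(g,g) \to G(xg,x'g)\) supplied by \(m\) with \(\id \otimes \id_g\) to obtain the required morphism \(G(x,x') \to G(xg,x'g)\).

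For the converse I would build the hom-object maps of \(m\) directly out of the translations, again letting thinness supply functoriality for free. The morphism required of \(m\) at the pair \((x,y),(x',y')\) is
\[(G \otimes G)\bigl((x,y),(x',y')\bigr) = G(x,x') \otimes G(y,y') \longrightarrow G(xy,x'y').\]
Translation invariance furnishes morphisms \(G(x,x') \to G(xy',x'y')\) (from \(-\cdot y'\)) and \(G(y,y') \to G(xy,xy')\) (from \(x\cdot -\)); tensoring these and postcomposing with the composition morphism of \(G\),
\[\circ \colon G(xy',x'y') \otimes G(xy,xy') \longrightarrow G(xy,x'y'),\]
yields the desired map. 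Because \(\cf{V}\) is thin, the existence of this morphism is all that is needed to conclude that \(m\) is a \(\cf{V}\)-functor, completing the equivalence.

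I expect the only real subtlety to be organisational rather than technical. The substantive point is the recognition that the quantale hypothesis collapses every coherence check to the mere \emph{existence} of a morphism in \(\cf{V}\), so that the proposition reduces to the two short constructions above; were \(\cf{V}\) not thin, the converse would instead require verifying an interchange law to promote the partial maps to a genuine functor on the tensor product. The one genuine choice to be made is the factorisation used in the converse: one must interleave a right translation by \(y'\) with a left translation by \(x\) (rather than the symmetric alternative) so that the two legs meet at \(xy'\) and compose to \(G(xy,x'y')\) without appealing to the symmetry of \(\cf{V}\).
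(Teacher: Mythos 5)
Your proof is correct. Note that the paper itself offers no proof of this proposition: it is imported verbatim from Clementino and Montoli (Proposition 3.1 of \cite{ClementinoMontoli2021}), which is why the statement carries a \qed{} with no argument attached. So your write-up supplies what the paper omits, and it does so soundly. The three load-bearing observations are all in place: (i) since a quantale is thin as a category, a \(\cf{V}\)-functor is nothing more than an object map together with the \emph{existence} of hom-morphisms (inequalities), and parallel \(\cf{V}\)-functors agreeing on objects coincide, so the unit \(\cl{I}\to G\) and the monoid coherence axioms come for free once \(m\) is known to be a \(\cf{V}\)-functor; (ii) the forward direction correctly exhibits each translation as \(m\) precomposed with \(\id_G\otimes J_g\) (or \(J_g\otimes\id_G\)) and a unitor, using only that \(\otimes\) is functorial and that \(I\to G(g,g)\) is supplied by the identity constraint; (iii) the converse builds the required inequality \(G(x,x')\otimes G(y,y')\to G(xy,x'y')\) by tensoring the translation inequalities and composing, and your factorization through \(xy'\) does match the paper's composition convention \(\cl{X}(y,z)\otimes\cl{X}(x,y)\to\cl{X}(x,z)\) exactly, so no symmetry of \(\cf{V}\) is invoked—a genuine, if small, refinement, since quantales need not be commutative even though the ambient construction of \(\otimes\) on \(\cf{V}\cn{Cat}\) assumes symmetry. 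The only step you leave tacit is that tensoring two inequalities is legitimate, i.e.\ that the quantale multiplication is monotone in each variable; this follows from join-preservation and is worth a clause, but it is not a gap.
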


In the next section we will analyse the iterated magnitude homology in low degrees of a \(\cf{V}\cn{Cat}\)-group where \(\cf{V} = \cn{Set}\) (so \(\cf{V}\cn{Cat} = \cn{Cat}\)), \(\cf{V} = \cn{Truth}\) (so \(\cf{V}\cn{Cat} = \cn{PreOrd}\)), or \(\cf{V} = [0, \infty]\) (so \(\cf{V}\cn{Cat} = \cn{GMet}\)). To make the analysis as concrete as possible, we will first describe three types of naturally occurring example: strict 2-groups, partially ordered groups, and any group equipped with a norm that is constant on conjugacy classes.

\begin{eg}\label{eg:2groups}
The class of \(\cn{Cat}\)-groups encompasses all groups internal to \(\cn{Cat}\). These are usually called  \emph{strict 2-groups} \cite{BaezLauda2004} or \emph{strict \(gr\)-categories} \cite{Sinh1978}.  Explicitly, a strict 2-group is a category \(\cl{G}\) equipped with functors \(m: \cl{G} \times \cl{G} \to \cl{G}\) (performing multiplication), \(e: \cl{1} \to \cl{G}\) (selecting the identity) and \(i: \cl{G} \to \cl{G}\) (taking inverses) which satisfy axioms imitating those for an ordinary group. In particular, the action of \(m\), \(e\) and \(i\) on objects makes the set \(\ob(\cl{G})\) into a group---and these functors make the set \(\arr(\cl{G})\) of arrows into a group as well. Given arrows \(p: g_1 \to g_2\) and \(q: h_1 \to h_2\), the function
\[
(\cl{G} \times \cl{G}) ( (g_1,h_1), (g_2,h_2)) = \cl{G}(g_1,g_2) \times \cl{G}(h_1,h_2) \xto{m_{(g_1,h_1),(g_2,h_2)}} \cl{G}(g_1h_1, g_2h_2)
\]
provided by the functoriality of \(m\) tells us how to multiply \(p\) and \(q\). The identity in \(\arr(\cl{G})\) is the identity arrow on the identity object \(e\); the inverse of an arrow \(g_1 \to g_2\) is an arrow \(g_1^{-1} \to g_2^{-1}\).

So a strict 2-group really does comprise two groups---a group of objects and a group of arrows---which are intertwined in a specific way. The rules for this intertwining are captured by the data of a \emph{crossed module}. A crossed module is a group homomorphism \(\phi: H \to G\) along with an action of \(G\) on \(H\) by automorphisms, such that \(\phi\) is equivariant with respect to conjugation in \(G\) and satisfies an equation known as the \emph{Pfeiffer identity}. These conditions ensure, in particular, that \(\im(\phi)\) is a normal subgroup of \(G\). Indeed, the simplest example of a crossed module is given by the inclusion of a normal subgroup \(N \hookrightarrow G\). In the corresponding strict 2-group \(\cl{G}_N\), the objects are the elements of \(G\) and an arrow \(g \to h\) is a pair \((k,g)\) such that \(k \in N\) and \(kg = h\). Composition of arrows is given by multiplication in \(N\), and the functor \(m: \cl{G}_N \times \cl{G}_N \to \cl{G}_N\) is defined on objects by multiplication in \(G\) and on arrows by multiplication in the semidirect product \(N \rtimes G\).

We can travel in the other direction, too, to construct a crossed module from any strict 2-group \(\cl{G}\). This makes use of the source and target maps \(s,t: \arr(\cl{G}) \rightrightarrows \ob(\cl{G})\), both of which are group homomorphisms. Let \(G = \ob(\cl{G})\) and \(H = \ker(s)\); that is, \(H = \{ p: e \to g \mid g \in G\}\). The target map restricts to a homomorphism \(t: H \to G\) and one can define an action of \(G\) on \(H\) making \(t\) into a crossed module. This construction extends to an equivalence between the category of strict 2-groups and a certain category of crossed modules; for a textbook account, see \S12.8 of Mac Lane \cite{MacLane1997}.

That equivalence is of interest---and will be useful in the next section---because crossed modules have played an important role historically in homotopy theory. They were introduced by Whitehead in the 1940s as an algebraic model for \emph{homotopy 2-types}: spaces whose homotopy groups \(\pi_n\) vanish for \(n > 2\) \cite{Whitehead1949}. From any crossed module \(H \xto{\phi} G\) one can build a space \(\bb{B}(H \xto{\phi} G)\) with the property that \(\pi_1(\bb{B}(H \xto{\phi} G)) \cong G/\im(\phi)\), \(\pi_2(\bb{B}(H \xto{\phi} G)) \cong \ker(\phi)\) and \(\pi_n(\bb{B}(H \xto{\phi} G)) = 0\) for all \(n > 2\). This, in turn, extends to an equivalence between a certain homotopy category of crossed modules and the homotopy category of pointed, connected homotopy 2-types \cite{MacLaneWhitehead1950}.

For example, given a normal subgroup \(N \hookrightarrow G\), the space \(\bb{B}(N \hookrightarrow G)\) has fundamental group \(G/N\). Given any strict 2-group \(\cl{G}\), the fundamental group of the space \(\bb{B}(H \xto{t} G)\) is \(G/E\), where \(E = \{g \in G \mid \exists \, e \xto{p} g\}\).
\end{eg}

\begin{eg}\label{eg:POgroups}
A \emph{partially ordered group} is a group \(G\) equipped with a partial order on its elements that is translation invariant in the sense that for all \(g \leq h\) and all \(k \in G\) we have
\[gk \leq hk \text{ and } kg \leq kh.\]
The study of partially ordered groups has a rich history going back to work of Birkhoff, Clifford and others in the 1940s (e.g.~\cite{Birkhoff1942,Clifford1940,EverettUlam1945}), and further still in the case of ordered fields.

Since monotone maps are \(\cn{Truth}\)-functors, \Cref{prop:enrich_biinv} tells us that every partially ordered group is a \(\cn{PreOrd}\)-group. In particular, every partially ordered group is a \(\cn{Cat}\)-group---but the interesting partially ordered groups are \emph{not} strict 2-groups. Indeed, a partially ordered group is a strict 2-group if and only if inversion is a monotone map, in which case for all \(g,h \in G\) such that \(g \leq h\) we have \(g^{-1} \leq h^{-1}\) and thus, by translation invariance, \(h = h g^{-1} g \leq h h^{-1} g = g\). In other words, only the trivial partial order on \(G\) makes \(G\) into a strict 2-group.

In \cite{ClementinoEtal2019}, Clementino \emph{et al} study \(\cn{PreOrd}\)-groups in generality. Central to their analysis is the fact that to give a group \(G\) a \(\cn{PreOrd}\)-enrichment is equivalent to specifying a submonoid of \(G\) which is closed under conjugation (\cite{ClementinoEtal2019}, Proposition 2.1). This submonoid is the \textbf{positive cone} \(P_G = \{g \in G \mid e \leq g \}\). A preorder makes \(G\) into a strict 2-group if and only if \(P_G\) is a normal subgroup, which holds if and only if the preorder is symmetric and therefore a congruence on \(G\).
\end{eg}

\begin{eg}\label{eg:Metgroups}
When group theorists speak of a `left-invariant' or `right-invariant' metric on a group, they mean a metric with respect to which left-translation (respectively, right-translation) by any given element is an isometry. A `bi-invariant' metric is one which is both left- and right-invariant in this sense. This appears stronger than the notion of translation invariance in \Cref{prop:enrich_biinv}, which only requires that translation be an enriched functor---in the case of metric spaces, this means a short map. In fact, the two notions are equivalent: if right-translation by every element is a {short map}, then for every \(g\), \(h\) and \(k\) in \(G\) we have
\[d(g,h) = d(gkk^{-1}, hkk^{-1}) \leq d(gk,hk) \leq d(g,h)\]
so right-translation by \(k\) is an isometry, and in the same way so is left-translation. By \Cref{prop:enrich_biinv}, then, a bi-invariant metric on a group is the same thing as an enrichment in the category \(\cn{Met}\) of metric spaces and short maps.

In \Cref{sec:MH_groups} we will see that the iterated magnitude homology of a \(\cn{Met}\)-group is easiest to describe in terms of the associated group norm. A \textbf{group norm} on a group \(G\) with identity element \(e\) is a function \(|-|:G \to \bb{R}\) satisfying
\begin{enumerate}
\item \(|e| = 0\) and \(|g| > 0\) for all \(g \in G \backslash \{e\}\); \label{def:mon_norm_1}
\item \(|gh| \leq |g| + |h|\) for all \(g,h \in G\). \label{def:mon_norm_2}
\end{enumerate}
If in addition \(|ghg^{-1}| = |h|\) for all \(g,h \in G\)---equivalently, \(|gh| = |hg|\) for all \(g\) and \(h\)---one says that \(|-|\) is \textbf{conjugation invariant}.

Any group norm on \(G\) induces a metric by setting \(d(g,h) = |h^{-1}g|\): separatedness follows from condition \ref{def:mon_norm_1} in the definition, and the triangle inequality from condition \ref{def:mon_norm_2}, just as in the case of a vector space norm. It is well known and elementary that this metric is guaranteed always to be left-invariant, and will be right-invariant too if the norm is conjugation invariant. Conversely, any left-invariant metric \(d\) on \(G\) induces a norm on \(G\) by setting \(|g| = d(g,e)\), and if \(d\) is also right-invariant then this norm will be conjugation invariant. Thus, we have:

\begin{prop}\label{prop:norm_equiv}
A \(\cn{Met}\)-group is exactly a group equipped with a conjugation-invariant norm. \qed
\end{prop}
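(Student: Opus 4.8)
The plan is to assemble the equivalence from the observations already recorded in \Cref{eg:Metgroups}, organised as an explicit bijection between the two kinds of structure. First I would invoke \Cref{prop:enrich_biinv} for the quantale \(\cf{V} = [0,\infty]\): it identifies a \(\cn{Met}\)-group with a group \(G\) carrying a separated metric \(d\) for which left- and right-translation by every element are short maps. The short computation displayed in \Cref{eg:Metgroups} then promotes ``short map'' to ``isometry'', so that a \(\cn{Met}\)-group is precisely a group equipped with a bi-invariant separated metric. It therefore remains to produce a bijection between bi-invariant separated metrics on \(G\) and conjugation-invariant group norms.

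Next I would exhibit the two constructions and verify that each lands in its intended target. Given a bi-invariant metric \(d\), set \(|g| = d(g,e)\); conversely, given a conjugation-invariant norm \(|-|\), set \(d(g,h) = |h^{-1}g|\). In the first direction, the first norm axiom follows from separatedness of \(d\) (together with \(d(e,e)=0\)); the second follows from the triangle inequality after rewriting \(d(gh,g)=d(h,e)\) by left-invariance; and conjugation invariance \(|ghg^{-1}|=|h|\) follows by applying right- and then left-invariance to \(d(ghg^{-1},e)\). In the second direction the defining formula is automatically left-invariant, the triangle inequality and separatedness transcribe the two norm axioms, and right-invariance follows from conjugation invariance since \(d(gk,hk)=|k^{-1}h^{-1}gk|=|h^{-1}g|=d(g,h)\). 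Much of this bookkeeping is already carried out in the paragraph preceding the statement.

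Finally I would confirm that the two assignments are mutually inverse. Starting from a norm and forming \(d(g,h)=|h^{-1}g|\), evaluation at \(h=e\) returns \(|g|\); starting from a metric and forming \(|g|=d(g,e)\), the formula \(|h^{-1}g|=d(h^{-1}g,e)\) recovers \(d(g,h)\) by left-invariance. I do not anticipate a genuine obstacle: the content is elementary and largely recorded already. The only points that require a moment's care are keeping track of which translation-invariance is used at each step, and the stated equivalence of the two formulations of conjugation invariance---that \(|ghg^{-1}|=|h|\) for all \(g,h\) coincides with \(|gh|=|hg|\) for all \(g,h\)---which I would dispatch using the identity \(a(ba)a^{-1}=ab\). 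For precision I would also note that, since a group norm is real-valued, the correspondence matches conjugation-invariant norms with the finite-valued bi-invariant metrics among all separated ones.
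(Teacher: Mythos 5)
Your proposal is correct and takes essentially the same route as the paper: the paper's proof is precisely the content of \Cref{eg:Metgroups}, namely invoking \Cref{prop:enrich_biinv}, upgrading short-map translations to isometries via the displayed computation \(d(g,h) = d(gkk^{-1},hkk^{-1}) \leq d(gk,hk) \leq d(g,h)\), and then applying the dictionary \(|g| = d(g,e)\), \(d(g,h) = |h^{-1}g|\) between bi-invariant metrics and conjugation-invariant norms. Your closing remark on finite-valuedness is a fair point of precision (the paper's metrics may take the value \(\infty\) while its norms are real-valued by definition), but it concerns the statement rather than the method and does not alter the argument.
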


Conjugation-invariant norms and their associated metrics have been well studied since work of Klee in the 1950s. Klee proved that every abelian group which is Hausdorff, topologically complete and metrizable admits a bi-invariant metric  \cite{Klee1952}. The existence and properties of such metrics on interesting classes of groups remains an active topic of research due to its significance in fields such as symplectic topology and contact geometry \cite{Han2009, Sandon2015} and in rigidity theory \cite{PolterovichEtal2023}.

In a sense the simplest sort of example is the `word norm' determined by a subset \(S \subset G\) that normally generates \(G\) (meaning that the subgroup generated by conjugates of elements in \(S\) is \(G\)). The norm of \(g \in G\) with respect to \(S\) is the length of the shortest word expressing \(g\) in conjugates of elements of \(S\). Such norms are always integer-valued, but this is not the case in general: examples of `fine' conjugation-invariant norms---those which can take values arbitrarily close to zero---are discussed in \cite{BuragoEtal2008}, Examples 1.19 and 1.21.
\end{eg}

\begin{rmk}\label{rmk:internal_met}
A bi-invariant metric on a group \(G\) is symmetric (thus, a classical metric) if and only if the function \(G \to G\) taking each element to its inverse is a short map. Even in this case, though, \(G\) cannot be interpreted as an internal group object in \(\cn{Met}\) (which is not a cartesian category). The issue is that the function \(G \to G \times G\) which \emph{pairs} each element with its inverse is {not} a short map unless \(G\) is the trivial group.
\end{rmk}

%%%%%%%%%%%%%%%%%%%%%%%%%%%%%%%%%%%%%%%%%%%%%%%%%%%%%

\section{Magnitude homology of enriched groups}\label{sec:MH_groups}

In this section we analyse the iterated magnitude homology, in low degrees, of each of the three classes of examples described in \Cref{sec:enriched_groups}: strict 2-groups, partially ordered groups, and any group with a conjugation-invariant norm.

In each case, a coefficient functor on the base category has already been chosen: on \(\cn{Set}\), we take \(\Sigma\) to be the free abelian group functor; to obtain a coefficient functor on \(\cn{Truth}\) we restrict \(\Sigma\) along the inclusion \(\cn{Truth} \hookrightarrow \cn{Set}\); and on \([0, \infty]\) we take as coefficients the functor \(\Sigma^*\) described in \Cref{def:AbR}. Since the coefficients and the categorical dimension are understood, we will simplify the notation by omitting the superscript \(2\Sigma\), writing \(MH_\bullet(\cl{G})\) for the iterated magnitude homology of a \(\cn{Cat}\)- or \(\cn{PreOrd}\)-group \(\cl{G}\), and \(MH_\bullet^*(\cl{G})\) for the iterated magnitude homology of a \(\cn{Met}\)-group.

We begin by considering strict 2-groups, as in this case the equivalence with crossed modules allows for an especially efficient analysis. Baues and Ellis have defined the (co)homology of a crossed module \(H \xto{\phi} G\) to be the (co)homology of the space \(\bb{B}(H \xto{\phi} G)\) and have studied its properties \cite{BauesCombinatorial1991,EllisHomology1992}. From Ellis's description on pages 5--6 of \cite{EllisHomology1992} it is clear that the space \(\bb{B}(H \xto{\phi} G)\) coincides with the classifying space of the corresponding strict 2-group. Hence, we have:

\begin{thm}\label{thm:MH_BauesEllis}
The iterated magnitude homology of a strict 2-group \(\cl{G}\) is the Baues--Ellis homology of the corresponding crossed module. \qed
\end{thm}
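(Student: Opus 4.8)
The plan is to deduce the statement from \Cref{thm:MH_Duskin} together with the equivalence between strict 2-groups and crossed modules recalled in \Cref{eg:2groups}. The first observation is that a strict 2-group \(\cl{G}\) is, in particular, a one-object strict 2-category. Taking \(\Sigma\) to be the free abelian group functor on \(\cn{Set}\), \Cref{thm:MH_Duskin} therefore applies verbatim and supplies a natural isomorphism \(MH_\bullet^{2\Sigma}(\cl{G}) \cong H_\bullet(\bb{B}\cl{G})\) between the iterated magnitude homology of \(\cl{G}\) and the integral homology of its classifying space.

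It remains to identify \(H_\bullet(\bb{B}\cl{G})\) with the Baues--Ellis homology of the associated crossed module. Under the equivalence of \Cref{eg:2groups}, the 2-group \(\cl{G}\) corresponds to the crossed module \(H \xto{t} G\) with \(G = \ob(\cl{G})\) and \(H = \ker(s)\). I would then invoke the geometric fact---attributed to Ellis and noted immediately before the theorem---that the crossed-module classifying space \(\bb{B}(H \xto{t} G)\) coincides with the 2-categorical classifying space \(\bb{B}\cl{G}\). Since the Baues--Ellis homology of \(H \xto{t} G\) is \emph{by definition} the homology \(H_\bullet(\bb{B}(H \xto{t} G))\), chaining these identifications gives
\[MH_\bullet^{2\Sigma}(\cl{G}) \cong H_\bullet(\bb{B}\cl{G}) = H_\bullet(\bb{B}(H \xto{t} G)),\]
which is precisely the claim.

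The genuine content of the argument is concentrated entirely in the middle step: the identification \(\bb{B}(H \xto{t} G) \cong \bb{B}\cl{G}\) of the two classifying-space constructions. This is not provided by the magnitude machinery, and is instead the point where one must compare Ellis's explicit simplicial model for the classifying space of a crossed module with the geometric nerve of the 2-category \(\cl{G}\) (or, equivalently via Bullejos--Cegarra, with the diagonal of its double nerve). I expect this to be the main obstacle, though in the present setting it can be cited rather than reproved; the first and last steps are, respectively, a direct appeal to an already-established theorem and the unwinding of a definition.
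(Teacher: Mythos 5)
Your proposal is correct and follows essentially the same route as the paper: the paper's own justification is exactly the combination of \Cref{thm:MH_Duskin} with the observation (cited from pages 5--6 of Ellis's paper) that the crossed-module classifying space \(\bb{B}(H \xto{t} G)\) coincides with the classifying space of the corresponding strict 2-group, together with the fact that Baues--Ellis homology is by definition the homology of that space. You have also correctly located the only substantive step---the identification of the two classifying-space constructions---which the paper, like you, cites rather than reproves.
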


Because the homotopy theory of crossed modules is completely understood, the low-degree homology groups of a strict 2-group are easy to describe.

\begin{eg}\label{eg:2groups_MH}
Given a group \(G\) with a normal subgroup \(N\), let \(\cl{G}_N\) denote the strict 2-group whose construction is described in \Cref{eg:2groups}. By the discussion in that example, the space \(\bb{B}(N \hookrightarrow G)\) has fundamental group \(G/N\). The Hurewicz theorem implies, then, that \(MH_1(\cl{G}_N)\) is the abelianization \((G/N)_\mathrm{ab}\), and applying Proposition 5(iv) in Ellis \cite{EllisHomology1992} we see that \(MH_0(\cl{G}_N) = \bb{Z}\).

More generally, if \(\cl{G}\) is any strict 2-group then, by Ellis's Proposition 5(iv) and the discussion in \Cref{eg:2groups}, we have
\[MH_0(\cl{G}) = \bb{Z} \text{ and } MH_1(\cl{G}) = (G/E)_\mathrm{ab}\]
where \(E \subseteq G\) is the subgroup of objects which admit an arrow from \(e\).
\end{eg}

Not all interesting \(\cn{Cat}\)-groups are strict 2-groups, though, as Example \ref{eg:POgroups} shows. We would like to extend \Cref{eg:2groups_MH} to describe the first two iterated magnitude homology groups of any \(\cn{Cat}\)-group---for instance, any partially ordered group. The next definition and lemma will help make sense of the description.

\begin{defn}\label{def:comp_cat}
The set of \textbf{connected components} in a category \(\cl{X}\) is the set of equivalence classes in \(\ob(\cl{X})\) under the equivalence relation generated by the preorder `\(x \leq y \text{ if } \cl{X}(x,y) \neq \emptyset\)'.
\end{defn}

\begin{lem}\label{lem:CatGp_cong}
Let \(\cl{G}\) be a \(\cn{Cat}\)-group, and \(G\) its underlying group. The connected component of the identity in \(\cl{G}\) is a normal subgroup of \(G\).
\end{lem}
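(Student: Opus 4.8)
The plan is to reduce everything to one structural fact: in a $\cn{Cat}$-group, multiplication by any fixed element is an \emph{endofunctor} of the (single) hom-category, and functors automatically preserve the connectedness relation of \Cref{def:comp_cat}. First I would fix notation. Write $\ast$ for the unique object of $\cl{G}$ and $\cl{C} = \cl{G}(\ast,\ast)$ for its hom-category; the object set of $\cl{C}$ is the underlying group $G$, and the composition functor of $\cl{G}$ is a functor $m : \cl{C} \times \cl{C} \to \cl{C}$ which restricts on objects to group multiplication. Let $E \subseteq G$ be the connected component of the identity $e$, i.e.\ the equivalence class of $e$ under the relation on $\ob(\cl{C})$ generated by the preorder $x \le y \iff \cl{C}(x,y) \neq \emptyset$.

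The key observation I would establish is that, for each $k \in G$, the \emph{left} and \emph{right translations}
\[
L_k = m \circ (\langle k \rangle \times \id_{\cl{C}}), \qquad R_k = m \circ (\id_{\cl{C}} \times \langle k \rangle),
\]
where $\langle k \rangle : \cl{1} \to \cl{C}$ is the functor selecting the object $k$, are endofunctors of $\cl{C}$ acting on objects by $a \mapsto ka$ and $a \mapsto ak$ respectively. Crucially, these are assembled from the composition functor $m$ alone, so they are genuine functors even though inversion is \emph{not} assumed to be a $\cn{Cat}$-functor. Since any functor $F : \cl{C} \to \cl{C}$ sends a nonempty hom-set to a nonempty hom-set, it preserves the generating preorder and hence the equivalence relation it generates; thus $x \sim y$ implies $Fx \sim Fy$.

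From here the three required facts fall out quickly. To see $E$ is closed under products, given $g, g' \in E$ I apply $R_{g'}$ to a zig-zag witnessing $e \sim g$ to obtain $g' \sim gg'$, then combine with $e \sim g'$ by transitivity. For closure under inverses, applying $L_{g^{-1}}$ to a zig-zag witnessing $e \sim g$ yields $g^{-1} \sim g^{-1}g = e$. Finally, normality follows because conjugation $c_k = L_k \circ R_{k^{-1}}$ is a composite of functors, hence a functor fixing $e$; applying it to a zig-zag for $g \sim e$ gives $kgk^{-1} \sim e$, so $kEk^{-1} \subseteq E$ for every $k \in G$, which is normality.

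There is no serious obstacle here: the whole argument collapses onto the single point that translations—and therefore conjugations—are endofunctors built from $m$ alone. The one thing worth emphasising, and the only place one could go wrong, is precisely that inversion need not be enriched, yet $L_{g^{-1}}$ and $c_k$ remain honest functors because they are assembled from multiplication by fixed elements; this is exactly what lets both the inverse-closure and the normality steps go through without ever asking inversion to respect the $\cn{Cat}$-structure.
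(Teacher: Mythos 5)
Your proof is correct and is essentially the paper's own argument in more structural clothing: the paper's ``whiskering'' of each 2-morphism in a zig-zag by a fixed element is precisely the action on morphisms of your translation endofunctors \(L_k\) and \(R_k\), and both proofs reduce to the same three checks (products, inverses, conjugation) performed by transporting zig-zags. Packaging the whiskering as ``translations are functors built from \(m\) alone, and functors preserve the connectedness relation'' is a clean way to make explicit the point the paper leaves implicit, namely that nothing ever requires inversion to be enriched.
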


\begin{proof}
Let \(E\) denote the connected component of the identity element \(e\); that is, \(E = \{g \in G \mid g \sim e\}\) where \(\sim\) is the equivalence relation generated by \(\leq\). To see that \(E\) is closed under conjugation, take any \(g \in E\) and \(h \in {G}\). By definition of \(E\), there must exist a zig-zag of 2-morphisms connecting \(g\) to \(e\)---say
\bdiag[row sep=tiny]
& {} \arrow[Rightarrow, shorten >= -.4em]{d} \\
& {}  \\
\bullet 
& {} 	\arrow[Rightarrow, shorten >= -0em]{u}
	\arrow[draw=none, "\raisebox{+2.2ex}{\scalebox{.6}{\vdots}}" description]{d}
& \bullet 
	\arrow[from=ll, bend left=70, "g"]
	\arrow[from=ll, bend left=30]
	\arrow[from=ll]
	\arrow[from=ll, bend right=30]
	\arrow[from=ll, bend right=70, swap, "e"]
	\\
& {} \arrow[Rightarrow, shorten <= -.4em]{d} \\
& {} 
\ediag
Whiskering each 2-morphism in this zig-zag on the left by \(h^{-1}\) and on the right by \(h\) supplies a zig-zag of 2-morphisms connecting \(h^{-1} g h\) to \(h^{-1}eh = e\), so \(h^{-1}gh \sim e\) and we have \(h^{-1}gh \in E\).

Now take any pair of elements \(g,h \in E\), and take any zig-zag of 2-morphisms connecting \(g\) to \(e\). Whiskering on the right by \(h\) supplies a zig-zag of 2-morphisms connecting \(gh\) to \(h\), which implies that \(gh \sim h\) in \({G}\). Since \(h \sim e\), transitivity ensures we have \(gh \sim e\) and thus \(gh \in E\).

Finally, take any \(g \in E\) and any zig-zag of 2-morphisms connecting \(g\) to \(e\), and whisker on the right by \(g^{-1}\). This supplies a zig-zag of 2-morphisms connecting \(e\) to \(g^{-1}\), so \(g^{-1} \sim e\), which tells us that \(g^{-1} \in E\).
\end{proof}

\Cref{lem:CatGp_cong} implies that the equivalence relation defining the set of connected components is a congruence on the group \({G}\). This tells us that the set of connected components is itself a group, under the operation inherited from \({G}\).

\begin{defn}
Let \(\cl{G}\) be a \(\cn{Cat}\)-group, \(G\) its underlying group, and \(\sim\) the congruence generated by the preorder `\(g \leq h\) if \(\cl{G}(g,h) \neq \emptyset\)'. The group \({G} / \sim\) is the \textbf{group of connected components} in \(\cl{G}\). We will denote it \(\mathrm{Con}(\cl{G})\).
\end{defn}

With this, we can describe the iterated magnitude homology in degrees 0 and 1 of any \(\cn{Cat}\)-group.

\begin{thm}\label{thm:MH_quotient}
Let \(\cl{G}\) be a \(\cn{Cat}\)-group. Then
\[MH_0(\cl{G}) = \bb{Z} \text{ and } MH_1(\cl{G}) = \mathrm{Con}(\cl{G})_{\mathrm{ab}}\]
the abelianization of the group of connected components in \(\cl{G}\). \qed
\end{thm}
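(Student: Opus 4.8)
The plan is to reduce the computation to the topology of the classifying space and then read off its low-degree homology. A \(\cn{Cat}\)-group is, in particular, a (one-object) strict 2-category, so \Cref{thm:MH_Duskin} applies and supplies a natural isomorphism \(MH_\bullet(\cl{G}) \cong H_\bullet(\bb{B}\cl{G})\). It therefore suffices to compute \(H_0\) and \(H_1\) of the space \(\bb{B}\cl{G} = |S(\cl{G})|\), where \(S(\cl{G})\) is the geometric nerve. Since \(\cl{G}\) has a single object, its geometric nerve has exactly one \(0\)-simplex; hence \(\bb{B}\cl{G}\) is path-connected and \(H_0(\bb{B}\cl{G}) \cong \bb{Z}\), which gives the first claim.

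For the second claim I would invoke the Hurewicz theorem, which identifies \(H_1(\bb{B}\cl{G})\) with the abelianization \(\pi_1(\bb{B}\cl{G})_{\mathrm{ab}}\), so the problem comes down to computing the fundamental group. To do this I would use the standard edge-path presentation of \(\pi_1\) of a simplicial set in terms of its \(2\)-skeleton. The \(1\)-simplices of \(S(\cl{G})\) are precisely the \(1\)-morphisms of \(\cl{G}\), i.e. the elements of the underlying group \(G\), so these furnish a generating set \(\{[g] : g \in G\}\). A \(2\)-simplex records a triple of \(1\)-morphisms together with a \(2\)-morphism relating the composite of \(g_{01}\) and \(g_{12}\) to \(g_{02}\) in the hom-category \(\cl{G}(*,*)\), and contributes the relation \([g_{12}][g_{01}] = [g_{02}]\).

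Specialising to \(2\)-simplices carrying an identity \(2\)-morphism (which always exist) shows that \(g \mapsto [g]\) is a monoid, and hence group, homomorphism \(G \twoheadrightarrow \pi_1(\bb{B}\cl{G})\); feeding this back into the general relation shows that \([a] = [b]\) holds exactly when there is a \(2\)-morphism \(a \Rightarrow b\), that is, when \(\cl{G}(*,*)(a,b) \neq \emptyset\). Consequently \(\pi_1(\bb{B}\cl{G})\) is the quotient of \(G\) by the normal closure of \(\{ab^{-1} : \cl{G}(*,*)(a,b) \neq \emptyset\}\). Here is where I would use \Cref{lem:CatGp_cong}: it shows that the equivalence relation generated by \(a \leq b\) is a congruence whose quotient is the group \(\mathrm{Con}(\cl{G})\), and that the normal subgroup generated by the elements \(ab^{-1}\) above is exactly the connected component of the identity. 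Matching the two presentations then yields \(\pi_1(\bb{B}\cl{G}) \cong \mathrm{Con}(\cl{G})\), and hence \(MH_1(\cl{G}) \cong \mathrm{Con}(\cl{G})_{\mathrm{ab}}\).

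I expect the main obstacle to be the careful bookkeeping involved in extracting the presentation of \(\pi_1\) from the geometric nerve---in particular, pinning down the precise form of the relations coming from \(2\)-simplices and confirming that the congruence they generate coincides with the one furnished by \Cref{lem:CatGp_cong}. An alternative, more self-contained route would bypass the classifying space entirely and instead compute the total homology of the double complex \(C(MB^{MB^{\Sigma}}(\cl{G}))\) directly in total degrees \(0\) and \(1\) via \eqref{MH2_EZ}; this amounts to the same presentation argument translated into chain-level data, but I would favour the topological version for transparency.
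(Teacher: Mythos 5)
Your proposal is correct, but it takes a genuinely different route from the paper's. You exit to topology at the first step: a \(\cn{Cat}\)-group is a one-object strict 2-category, so \Cref{thm:MH_Duskin} gives \(MH_\bullet(\cl{G}) \cong H_\bullet(\bb{B}\cl{G})\), after which \(H_0\) follows from path-connectedness (one vertex in the geometric nerve) and \(H_1\) from Hurewicz together with the edge-path presentation of \(\pi_1\) read off the 2-skeleton. The paper instead never leaves the chain level: it invokes the bisimplicial Eilenberg--Zilber theorem (\Cref{thm:EZ_general}) to replace the iterated magnitude nerve by the double complex \(C(MB^{MB^\Sigma}(\cl{G}))\), and runs the column-filtration spectral sequence, computing \(E^1_{q0}\), \(E^1_{01}\) and the \(d^1\) differential by hand to exhibit \(MH_1(\cl{G})\) as \(\bb{Z}\cdot\mathrm{Con}(\cl{G})\) modulo the relations \([g_0]-[g_0g_1]+[g_1]\), i.e.\ the abelianization. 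Your route buys brevity and conceptual transparency, at the cost of importing the Bullejos--Cegarra comparison (packaged in \Cref{thm:MH_Duskin}), the Hurewicz theorem, and the combinatorial presentation of \(\pi_1\) of a reduced simplicial set; it also needs one small argument beyond the literal statement of \Cref{lem:CatGp_cong}, namely that the normal closure of \(\{ab^{-1} : \cl{G}(*,*)(a,b) \neq \emptyset\}\) equals the connected component \(E\) of the identity (whiskering by \(b^{-1}\) gives one inclusion, transitivity along zigzags the other), so that \(G/N \cong \mathrm{Con}(\cl{G})\). The paper's longer computation buys something too: it is a template that survives where no classifying-space theorem exists---the proof of \Cref{thm:MH_normed_gps} for normed groups in \Cref{sec:proofs} follows exactly the same double-complex strategy, whereas your topological shortcut has no analogue there. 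One wording caveat: your claim that \([a]=[b]\) holds \emph{exactly} when there is a 2-morphism \(a \Rightarrow b\) is not literally true in \(\pi_1\); those are the imposed relations, and the relation they generate is \(ab^{-1} \in E\), which is what your final identification actually uses.
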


    \begin{lrbox}{\OneSingleA}%
        \begin{tikzcd}%
            \bullet \arrow[]{r}{g_0} & \bullet
        \end{tikzcd}%
    \end{lrbox}
    
    \begin{lrbox}{\OneSingleB}%
        \begin{tikzcd}%
            \bullet \arrow[]{r}{h_0} & \bullet
        \end{tikzcd}%
    \end{lrbox}
    
    \begin{lrbox}{\TwoSinglesA}%
        \begin{tikzcd}%
            \bullet \arrow[]{r}{g_0} & \bullet \arrow[]{r}{g_1} & \bullet
        \end{tikzcd}%
    \end{lrbox}
    
    \begin{lrbox}{\TwoSinglesB}%
        \begin{tikzcd}%
            \bullet \arrow[]{r}{h_0} & \bullet \arrow[]{r}{h_1} & \bullet
        \end{tikzcd}%
    \end{lrbox}
    
    \begin{lrbox}{\OnePairA}%
        \begin{tikzcd}[row sep=tiny, column sep=tiny]%
            & {} \arrow[Rightarrow, shift right=1, "\alpha"]{dd} \\ [-5]
            \bullet \arrow[bend left=35]{rr}{g_0} \arrow[bend right=35, swap]{rr}{h_0} & & \bullet \\ [-5]
            & {}
        \end{tikzcd}%
    \end{lrbox}

    \begin{lrbox}{\TwoPairs}%
        \begin{tikzcd}[row sep=tiny, column sep=tiny]%
            & {} \arrow[Rightarrow, shift right=1, "\alpha_1"]{dd} && \arrow[Rightarrow, shift right=1, "\alpha_2"]{dd} \\ [-5]
            \bullet \arrow[bend left=35]{rr}{g_0} \arrow[bend right=35, swap]{rr}{h_0} && \bullet \arrow[bend left=35]{rr}{g_1} \arrow[bend right=35, swap]{rr}{h_1} && \bullet \\ [-5]
            & {} && {}
        \end{tikzcd}%
    \end{lrbox}

\begin{proof}
By the Eilenberg--Zilber theorem, we can compute the magnitude homology of \(\cl{G}\) as the total homology of the double complex \(C = C(MB^{MB^\Sigma}(\cl{G}))\). We will study the total homology via the column filtration spectral sequence \(\{E_{\bullet\bullet}^r\}\) of \(C\), using the isomorphism \(H_0(\Tot(C)) \cong E_{00}^\infty\) and the exact sequence
\begin{equation}\label{eq:MH_quotient_ses}
0 \to E_{01}^\infty \to H_1(\Tot(C)) \to E_{10}^\infty \to 0
\end{equation}
whose existence is implied by the convergence of \(\{E_{\bullet\bullet}^r\}\) to \(H_\bullet(\Tot(C))\). (For explanation of this see, e.g., the proof of Lemma C.3.2 in \cite{Roff2022}.) This lets us restrict attention to the southwest corner of \(C\), which has the form
\bdiag
\vdots \arrow{d}{\partial^v_{10}} & \vdots \arrow{d} & \vdots \arrow{d}\\
\bb{Z}\cdot\left\{\bullet \right\} \arrow{d}{\partial^v_{00}} & \bb{Z} \cdot \left\{ \usebox{\OnePairA} \right\} \arrow{d}{\partial^v_{01}} \arrow[swap]{l}{\partial^h_{10}} & \bb{Z} \cdot \left\{ \usebox{\TwoPairs} \right\} \arrow{d}{\partial^v_{11}} \arrow[swap]{l}{\partial^h_{11}} & \cdots \arrow{l} \\
\bb{Z}\cdot\left\{\bullet \right\} & \bb{Z} \cdot \left\{ \usebox{\OneSingleA} \right\} \arrow[swap]{l}{\partial^h_{00}} & \bb{Z} \cdot \left\{ \usebox{\TwoSinglesA} \right\} \arrow[swap]{l}{\partial^h_{01}} & \cdots \arrow{l} 
\ediag
where the horizontal differentials come from the simplicial structure induced by composition of 1-morphisms---that is, by multiplication in the underlying group---and the vertical differentials from the simplicial structure induced by composition of 2-morphisms.

To find page \(E^1\) we take vertical homology. In column 0 we have 
\[\partial^v_{q0} = \sum_{i=0}^{q+1} (-1)^i \id = \begin{cases} 0 & \text{when \(q\) is even} \\
\id &\text{when \(q\) is odd}\end{cases}\]
so \(E_{00}^1 = \bb{Z}\) and \(E_{q0}^1 = 0\) for all \(q > 0\). This implies in particular that \(E_{10}^\infty = 0\), so from (\ref{eq:MH_quotient_ses}) we have \(MH_1(\cl{G}) = H_1(\Tot(C)) \cong E_{01}^\infty = E_{01}^2\), since in a first-quadrant spectral sequence the bottom row stabilizes at page \(E^2\).

Column 1 is isomorphic to the complex of chains in the nerve of the category of elements in \(\cl{G}\). Explicitly,
\[\partial^v_{01}\left( \usebox{\OnePairA} \right) = \usebox{\OneSingleB} - \usebox{\OneSingleA}\]
so in \(E_{01}^1 = C_{01} / \im(\partial^v_{01})\) we have an identification of generators
\[\left[ \usebox{\OneSingleA} \right] = \left[ \usebox{\OneSingleB} \right]\]
whenever there exists a 2-morphism \(g_0 \Rightarrow h_0\) or \(h_0 \Rightarrow g_0\). In other words, \(E_{01}^1\) is the free abelian group on the elements of \(\mathrm{Con}(\cl{G})\). Similarly, we have
\[E_{02}^1 = \bb{Z} \cdot \left\{ \left[ \usebox{\TwoSinglesA} \right] \right\} \]
where
\[ \left[ \usebox{\TwoSinglesA} \right] = \left[ \usebox{\TwoSinglesB} \right] \]
whenever there are 2-morphisms \(g_0 \Rightarrow g_1\) \underline{and} \(h_0 \Rightarrow h_1\), or \(g_1 \Rightarrow g_0\) \underline{and} \(h_1 \Rightarrow h_0\).

The differential on page \(E^1\) is induced on vertical homology by the horizontal differential in \(C\). Since
\[\partial_{00}^h \left( \usebox{\OneSingleA} \right) = \bullet - \bullet = 0\]
we have \(MH_0(\cl{G}) = E_{00}^2 = E_{00}^1 = \bb{Z}\) as claimed, and \(MH_1(\cl{G}) = E_{01}^2 = E_{01}^1 / \im(d)\) where \(d\) is the map induced on vertical homology by \(\partial^h_{01}\). Thus,
\[MH_1(\cl{G}) = \frac{\bb{Z} \cdot \mathrm{Con}(\cl{G})}{\langle [g_0] - [g_0 g_1] + [g_1] \rangle}\]
which is the abelianization of the group \(\mathrm{Con}(\cl{G})\).
\end{proof}

\begin{eg}\label{eg:POgroups_MH}
Given a partially ordered group \(\cl{G} = (G, \leq)\), let \(\widetilde{\cl{G}} = (G, \sim)\) denote the same group equipped with the equivalence relation generated by \(\leq\). The connected components do not change when we pass from \(\cl{G}\) to \(\widetilde{\cl{G}}\). In particular, the connected component of the identity in \(\cl{G}\) is
\[E = \{g \in G \mid g \sim e\} = P_{\widetilde{\cl{G}}},\]
the positive cone of \(\widetilde{\cl{G}}\) as defined in \Cref{eg:POgroups}. Thus, \Cref{thm:MH_quotient} says that
\[MH_0(\cl{G}) = \bb{Z} = MH_0(\widetilde{\cl{G}}) \; \text{ and } \; MH_1(\cl{G}) = \left(\widetilde{\cl{G}}/P_{\widetilde{\cl{G}}}\right)_\mathrm{ab} = MH_1(\widetilde{\cl{G}}),\]
so the first two homology groups do not distinguish \(\widetilde{\cl{G}}\) from \(\cl{G}\).

For readers familiar with the homotopy theory of ordinary categories this may not come as a surprise, since, at least in the case of a \emph{finite} preorder \(Q\), the ordinary classifying space does not distinguish \(Q\) from the equivalence relation \(\widetilde{Q}\) that it generates. Indeed, in the finite case it is not hard to see that the inclusion \(Q \hookrightarrow \widetilde{Q}\) satisfies the conditions of Quillen's Theorem A and so induces a homotopy equivalence on the classifying space \cite{HigherK1973}. Note, however, that the only translation-invariant partial order on a \emph{finite} group is the trivial one (for an explanation, see \cite{Patotski2014}), so the interesting examples are necessarily infinite. In the case of an \emph{infinite} poset it is less clear whether Quillen's Theorem A applies.
\end{eg}

\Cref{eg:POgroups_MH} suggests that iterated magnitude homology may be rather insensitive as an invariant of partially ordered groups, and preordered groups more generally. By contrast, in the case of a group with a conjugation-invariant norm we will find evidence that iterated magnitude homology does capture information not just about the structure of the group itself but about its geometry under the norm. We begin by describing the double magnitude nerve, which requires the following definition.

\begin{defn}
Let \(\cl{G} = (G, |-|)\) be a group with a conjugation-invariant norm, and let \(d\) be the associated metric on \(G\). Given a matrix
\[\vec{g} = \begin{bmatrix}
g_{00} & \cdots & g_{0n} \\
\vdots & & \vdots \\
g_{m0} & \cdots & g_{mn}
\end{bmatrix}\]
of elements of \(\cl{G}\), the \textbf{total length} of \(\vec{g}\) is the sum of the lengths of its columns:
\[L(\vec{g}) = \sum_{i=0}^n \sum_{j=0}^{m-1} d(g_{ij}, g_{i,j+1}) = \sum_{i=0}^n \sum_{j=0}^{m-1} |g_{ij} g_{i,j+1}^{-1}|.\]
In particular, if \(m=0\), then \(L(\vec{g}) = 0\). The total length of the empty matrix is \(0\).
\end{defn}

Since the magnitude nerve of a metric space is a real-graded simplicial abelian group, the double magnitude nerve \(B^*_{\bullet\bullet} = MB^{MB^\Sigma}(\cl{G})\) is a real-graded bisimplicial abelian group. Unravelling \Cref{def:2MH} in this case yields the following description.

\begin{prop}\label{prop:double_MB_normed}
In grading \(\ell \in [0, \infty)\), for each \(q \geq 0\) and \(p >0\), the abelian group \(B_{pq}^\ell\) is freely generated by \((q+1) \times p\) matrices of elements of \(G\) whose total length is \(\ell\). Meanwhile, \(B_{0q}^\ell = 0\) for all \(\ell > 0\), and \(B_{0q}^0 = \mathbb{Z}\). The horizontal face maps \(\delta_j^h\), which are defined when \(p>0\), are specified on generators as follows:
\begin{itemize}
\item The map \(\delta_0^h\) deletes the first column of a generating matrix \(\vec{g}\) provided the length of this column is \(0\), which holds if and only if all its entries are equal. Otherwise, \(\delta_0^h(\vec{g}) = 0\). The map \(\delta_{p}^h\) acts in the same way with respect to the final column of \(\vec{g}\).
\item For \(0 < j < p\) the map \(\delta_j^h\) is induced by multiplication at column \(j-1\)---
\[\delta_i^h \left( \begin{bmatrix}
g_{00} & \cdots & g_{0,p-1} \\
\vdots & & \vdots \\
g_{q0} & \cdots & g_{q,p-1}
\end{bmatrix} \right) = \begin{bmatrix}
g_{00} & \cdots & g_{0,j-1} g_{0j} & \cdots & g_{0,p-1} \\
\vdots & & & & \vdots \\
g_{q0} & \cdots & g_{q,j-1} g_{qj} & \cdots & g_{q,p-1}
\end{bmatrix}
\]
---provided that for every \(0 \leq i < q\) we have
\[d(g_{i,j-1}g_{ij}, g_{i+1,j-1}g_{i+1,j}) = d(g_{i,j-1}, g_{i+1,j-1}) + d(g_{ij},g_{i+1,j}).\]
Otherwise, \(\delta_j^h(\vec{g}) = 0\).
\end{itemize}
When \(p>0\), each horizontal degeneracy map \(\sigma_j^h\) inserts a column of identity elements after column \(j\). When \(p=0\), the unique horizontal degeneracy map \(B_{0q}^0 = \bb{Z} \to B_{1q}^0\) takes the element \(1\) to a column (of length \(q+1\)) of identity elements.

When \(p > 0\), the vertical face maps are specified on generators as follows:
\begin{itemize}
\item The map \(\delta_0^v\) deletes the top row of a generating matrix \(\vec{g}\) provided that \(g_{0j} = g_{1j}\) for \(0 \leq j < p\); otherwise, \(\delta_0^v(\vec{g}) = 0\). The map \(\delta_q^v\) acts in the same way with respect to the bottom row.
\item For \(0 < i < q\), the map \(\delta_i^v\) deletes row \(i\) of \(\vec{g}\) provided that for every \(0 \leq j < p\) we have
\[d(g_{i-1,j}, g_{i+1,j}) = d(g_{i-1,j}, g_{ij}) + d(g_{ij}, g_{i+1,j});\]
otherwise, \(\delta_i^v(\vec{g}) = 0\).
\end{itemize}
Each vertical degeneracy map \(\sigma_i^v\) inserts a duplicate of row \(i\). When \(p=0\), all the vertical face and degeneracy maps are identities.
\qed
\end{prop}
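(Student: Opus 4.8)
The plan is to obtain every clause of the statement by unravelling \Cref{def:2MH} one layer at a time, writing \(T = MB^\Sigma\) for the coefficient functor on \(\cn{GMet}\) supplied by \Cref{prop:MB_monoid}. First I would identify the generators. Because \(\cl{G}\) has a single object \(\ast\), the outer direct sum of \Cref{def:mag_nerve} collapses to one summand, so the \(p\)-simplices of the outer nerve are the \(p\)-fold pointwise tensor power \(T(\cl{G}(\ast,\ast))^{\otimes_\pw p}\), where \(\cl{G}(\ast,\ast)\) is the metric space \((G,d)\) with \(d(g,h) = |h^{-1}g|\). Evaluating at inner level \(q\) turns \(\otimes_\pw\) into the convolution product \(\ost\) on \(\cn{Ab}^{[0,\infty)}\), so \(B_{pq} = (MB_q^\Sigma(G))^{\ost p}\). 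A generator of \(MB_q^\Sigma(G)\) in length grading \(r\) is a \(q\)-simplex of \((G,d)\) — a column \((g_0,\dots,g_q)\) with \(\sum_i d(g_i,g_{i+1}) = r\) — and the definition of \(\ost\) records a \(p\)-tuple of such columns whose lengths sum to \(\ell\), which is precisely a \((q+1)\times p\) matrix of total length \(\ell\). The case \(p = 0\) is the empty tensor, i.e.\ the unit \(I_\pw\), whose value in every inner degree is \(\bb{Z}\) concentrated in grading \(0\); this gives \(B_{0q}^0 = \bb{Z}\) and \(B_{0q}^\ell = 0\) for \(\ell > 0\).

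Second, the horizontal structure maps are exactly the simplicial operators of the outer nerve, so I would read them off from \Cref{def:mag_nerve} with coefficient functor \(T\). The single mechanism behind every case is that \(\Sigma^*\) sends a morphism \(u \to v\) of \([0,\infty]\) (that is, \(u \ge v\)) to \(0\) unless \(u = v\), when it is the identity; consequently \(T(f)\) sends a simplex to its image if \(f\) preserves the length of every step and to \(0\) otherwise. The outer faces \(\delta_0^h\) and \(\delta_p^h\) apply \(T\) to the terminal map \((G,d) \to \ast\); since the magnitude nerve of a point is concentrated in grading \(0\), this deletes the outer column when it has length \(0\) (equivalently, constant entries) and annihilates it otherwise. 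For \(0 < j < p\) the operator \(\delta_j^h\) is the composite of the monoidal isomorphism \(\Psi_{(G,d),(G,d)}\) of \Cref{prop:MB_monoid} with \(T(\text{mult})\): at inner level \(q\), \(\Psi\) pairs the two adjacent columns entrywise into a single column valued in the \(\ell^1\)-product \((G,d) \otimes (G,d)\) — the \(\ell^1\) metric is exactly what makes the grading add under \(\ost\) — and \(T(\text{mult})\) then multiplies entrywise. By the length mechanism this survives precisely when multiplication is distance-additive on each consecutive pair of rows, which is the stated betweenness-type condition. The horizontal degeneracies insert \(T(\id\colon \ast \to (G,d))\), the column picking out \(e\) in every row.

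Third, the vertical operators come from the inner simplicial structure of \(MB^\Sigma(G)\) itself, which the pointwise tensor applies factorwise (diagonally) across the \(p\) columns; so each vertical face is the corresponding face of the magnitude nerve of \((G,d)\), as in \Cref{def:mag_nerve}, performed simultaneously in every column. Because we work with the \emph{unnormalized} nerve, the endpoint faces \(\delta_0^v\) and \(\delta_q^v\) are retained alongside the interior faces described by \Cref{prop:MC_met}. Running the same \(\Sigma^*\)-mechanism once more: \(\delta_0^v\) (resp.\ \(\delta_q^v\)) uses the terminal map and so deletes the top (resp.\ bottom) row exactly when each column's first (resp.\ last) step has length \(0\), i.e.\ the two affected rows agree; and for \(0 < i < q\) the face \(\delta_i^v\) contracts at row \(i\) via composition in \((G,d)\), surviving precisely when \(g_{ij}\) lies between \(g_{i-1,j}\) and \(g_{i+1,j}\) in every column \(j\). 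Vertical degeneracies duplicate a row. Over \(p = 0\) the unit \(I_\pw\) forces every vertical operator to be an identity, while the unique horizontal degeneracy \(B_{0q}^0 \to B_{1q}^0\) sends \(1\) to the constant-\(e\) column.

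I expect the only genuinely delicate step to be the interior horizontal face map, where two distinct pieces of structure must be composed correctly: the monoidal coherence isomorphism \(\Psi\) of \Cref{prop:MB_monoid}, which is responsible for the additivity of length and hence for pairing columns entrywise, and the functoriality of \(MB^\Sigma\) on the multiplication map, which produces the length-additivity condition under multiplication. Keeping clear which layer of the iteration each operator is drawn from — horizontal operators from \(T\) as a coefficient functor on \(\cn{GMet}\), vertical operators from the intrinsic nerve of \((G,d)\) — is the main bookkeeping hazard; once that is fixed, every clause reduces to the elementary behaviour of \(\Sigma^*\) on morphisms of \([0,\infty]\).
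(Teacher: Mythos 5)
Your proposal is correct and takes exactly the approach the paper intends: the paper states this proposition without a written proof, presenting it as a direct unravelling of \Cref{def:2MH}, and your layer-by-layer computation---generators as convolution powers of the inner nerve, horizontal operators from the outer nerve with coefficient functor \(MB^{\Sigma^*}\), vertical operators from the inner simplicial structure applied columnwise, all governed by the observation that \(\Sigma^*\) sends non-identity morphisms of \([0,\infty]\) to zero---is precisely that unravelling, including the correct treatment of the termwise length-preservation conditions and of the unnormalized endpoint faces.
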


Recall from \Cref{sec:MH} that points \(x\) and \(y\) in a metric space are said to be \emph{non-adjacent} if there exists a point \(w\) lying {strictly between} \(x\) and \(y\) in the sense that \(w \neq x,y\) and \(d(x,y) = d(x,w) + d(w,y)\). Otherwise, \(x\) and \(y\) are said to be \emph{adjacent}. In certain cases, the elements in a normed group \(G\) which are adjacent to the identity have special significance: they can be thought of as `primitive' with respect to the norm on \(G\). For instance, if the norm is a word norm with respect to some set \(S\) which normally generates \(G\) (as described in \Cref{eg:Metgroups}), then the elements adjacent to the identity are precisely those which are conjugate to some element of \(S\). We will call such elements \emph{indecomposable}.

\begin{defn}
Let \(\cl{G} = (G, |-|)\) be a normed group. An element \(g \in G\) will be called \textbf{indecomposable} if \(g \neq e\) and for all \(h \in G \backslash \{e, g\}\) we have
\[|g| < |h^{-1}g| + |h|.\]
In terms of the associated metric on \(G\), an element is indecomposable if and only if it is adjacent to the identity.
\end{defn}

\Cref{thm:MH_normed_gps} says that if \(\cl{G} = (G,|-|)\) is a group with a conjugation-invariant norm, then iterated magnitude homology recovers the ordinary group homology of \(G\) \emph{and} counts (up to conjugacy) its indecomposable elements of every length. The proof, which is similar in strategy to that of \Cref{thm:MH_quotient} but more involved, is deferred to Appendix \ref{sec:proofs}.

\begin{thm}\label{thm:MH_normed_gps}
Let \(\cl{G} = (G, |-|)\) be a group with a conjugation-invariant norm. In length grading \(0\), iterated magnitude homology recovers the group homology of \(G\):
\[MH_\bullet^0(\cl{G}) \cong H_\bullet(G).\]
In length gradings \(\ell > 0\) we have \(MH_0^\ell(\cl{G}) = 0 = MH_1^\ell(\cl{G})\), while
\[MH_2^\ell(\cl{G}) \cong \bb{Z} \cdot \left\{ \begin{array}{l} \text{conjugacy classes of indecomposable} \\ \text{elements of norm \(\ell\) in \(G\)} \end{array} \right\}. \tag*{\qed}\]
\end{thm}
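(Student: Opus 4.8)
The plan is to follow the strategy of \Cref{thm:MH_quotient}, realizing iterated magnitude homology as the total homology of the double complex $C = C(MB^{MB^\Sigma}(\cl{G}))$ described in \Cref{prop:double_MB_normed} (via the isomorphism \eqref{MH2_EZ}), and analysing it one length grading $\ell$ at a time through the spectral sequence of the column filtration, taking vertical homology first. The key observation is that for each fixed horizontal degree $p$, the vertical complex is the chain complex associated to $(MB^\Sigma G)^{\otimes_\pw p}$, which by the Eilenberg--Zilber theorem for magnitude chains (\Cref{thm:EZ_magnitude}) has the same homology as the magnitude complex of the $\ell^1$-product metric space $G^{\otimes p}$. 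Hence the first page is $E^1_{pq} = MH^\ell_q(G^{\otimes p})$, with horizontal differential $d^1$ induced by the maps of \Cref{prop:double_MB_normed} that come from multiplication in $G$.

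In length grading $\ell = 0$ this already yields the first claim. Since the magnitude homology of any metric space vanishes in positive homological degree in grading $0$, while in degree $0$ it is free on the points of the space, the first page is concentrated in the row $q = 0$, where $E^1_{p0} = \bb{Z} \cdot G^p$ and the horizontal differential is exactly the bar differential of $G$. The spectral sequence therefore collapses onto the bar complex, giving $MH^0_\bullet(\cl{G}) \cong H_\bullet(G)$.

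Now fix $\ell > 0$. Here column $p = 0$ vanishes (by \Cref{prop:double_MB_normed}) and the whole row $q = 0$ vanishes, because $MH^\ell_0(G^{\otimes p}) = 0$ for every $p$. Inspecting total degrees $0$, $1$ and $2$, every contributing entry vanishes except $E^1_{11} = MH^\ell_1(G)$; this gives at once $MH^\ell_0(\cl{G}) = MH^\ell_1(\cl{G}) = 0$, and, since the differentials into and out of position $(1,1)$ land in groups ($E^2_{30}$ and $E^2_{-1,2}$) that vanish on all later pages, $MH^\ell_2(\cl{G}) \cong E^2_{11} = \mathrm{coker}\big(d^1 \colon MH^\ell_1(G^{\otimes 2}) \to MH^\ell_1(G)\big)$.

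The heart of the argument is identifying this cokernel. By \Cref{prop:MH1_met}, $MH^\ell_1(G)$ is free on the adjacent ordered pairs $(x,y)$ with $d(x,y) = \ell$, and conjugation invariance together with bi-invariance of $d$ lets us rewrite each generator in terms of the indecomposable element $xy^{-1}$. The key geometric input is that an adjacent pair $\big((a_0,a_1),(b_0,b_1)\big)$ in $G^{\otimes 2}$ must have $a_0 = b_0$ or $a_1 = b_1$ — otherwise $(a_0, b_1)$ lies strictly between its endpoints — so the generators of $MH^\ell_1(G^{\otimes 2})$ split into exactly two families. Computing $d^1$ on each family using the face maps of \Cref{prop:double_MB_normed} produces precisely the left- and right-translation relations $(x,y) \sim (kx, ky)$ and $(x,y) \sim (xk, yk)$; combining them identifies every generator with $(g,e)$ for $g = xy^{-1}$ indecomposable and further imposes $(g,e) \sim (hgh^{-1}, e)$. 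The remaining, and most delicate, step is to confirm that these are \emph{all} the relations, by exhibiting a well-defined surjection $MH^\ell_1(G) \to \bb{Z}\cdot\{\text{conjugacy classes of indecomposables of norm } \ell\}$ sending $(x,y) \mapsto [xy^{-1}]$ and checking that its kernel is exactly $\im(d^1)$. The main obstacle is this final identification: verifying the $\ell^1$-adjacency dichotomy and confirming that $d^1$ contributes neither too few relations (so the induced map is injective on the cokernel) nor too many (so it is well defined).
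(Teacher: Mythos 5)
Your proposal is correct, and its overall architecture coincides with the paper's: pass to the double complex of \Cref{prop:double_MB_normed} via the Eilenberg--Zilber theorem, work one length grading at a time taking vertical homology first (the paper packages this bookkeeping as Lemmas \ref{lem:cols_exact} and \ref{lem:row_col_0} rather than in spectral-sequence language), reduce \(MH_2^\ell(\cl{G})\) to the cokernel of the horizontal map out of column \(2\), and identify that cokernel with conjugacy classes of indecomposables via \((x,y) \mapsto xy^{-1}\). Your two families of relations are exactly the paper's \(\sim_1\) and \(\sim_2\), and your concluding identification is carried out the same way in the paper; it is routine rather than delicate, so the caution in your final sentence can be dropped. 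The genuine difference is in how the column-\(2\) differential is handled, and your route is arguably cleaner. The paper works at chain level: it computes \(\partial^h_{11}\) on \emph{all} \(2 \times 2\) generating matrices, and must then dispose of the case in which both columns have nonzero length; this is done with \Cref{lem:adj_split}, a group-theoretic characterization of non-adjacent pairs via compatible factorizations. You instead pass to vertical homology \emph{before} computing the differential, identifying \(E^1_{21}\) with \(MH_1^\ell(G^{\otimes 2})\), so that only \emph{adjacent} pairs in the \(\ell^1\)-product ever need to be considered; your dichotomy---an adjacent pair in \(G^{\otimes 2}\) must agree in some coordinate, since otherwise the mixed point \((a_0,b_1)\) lies strictly between---is a purely metric fact about \(\ell^1\) products, requiring no group structure, and it renders \Cref{lem:adj_split} unnecessary. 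What your approach buys is a clean separation of the metric geometry of products from the group-theoretic translation relations; what the paper's buys is never needing to identify the homology of column \(2\) at all. One citation should be corrected: the identification of column \(p\) with the magnitude chains of \(G^{\otimes p}\) follows from \Cref{prop:MB_monoid} (the pointwise tensor power of \(MB^\Sigma G\) \emph{is} \(MB^\Sigma(G^{\otimes p})\) as a simplicial abelian group), not from \Cref{thm:EZ_magnitude}, which relates \(C\) of a pointwise tensor to a tensor product of chain complexes and is not the statement you need here.
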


%%%%%%%%%%%%%%%%%%%%%%%%%%%%%%%%%%%%%%%%%%%%%%%%%%%%%

\section{Magnitude homology of strict \(n\)-categories}\label{sec:MH_ncats}

Since the magnitude nerve functor with respect to a strong symmetric monoidal size functor is always strong symmetric monoidal, this is in particular true when we take the size functor to be \(MB^\Sigma\) itself. That is, the double magnitude nerve
\[MB^{MB^{\Sigma}}: \cf{V}\cn{Cat} \to [\Delta^\op \times \Delta^\op, \ab{A}]\]
is strong symmetric monoidal. As the functor \(\diag: [\Delta^\op \times \Delta^\op, \ab{A}] \to  [\Delta^\op, \ab{A}]\) is strong symmetric monoidal too, so is their composite \(MB^{2\Sigma}: \cf{V}\cn{Cat} \to [\Delta^\op, \ab{A}]\). This suggests going a step further, to define a magnitude nerve for categories enriched in the category of \(\cf{V}\cn{Cat}\)-categories: 
\[MB^{3\Sigma}(-) = \diag \: MB^{MB^2}(-).\]
We will pursue this idea only in the case that \(\cf{V} = \cn{Set}\), where repeated iteration allows us to define a {sequence} of homology theories: a magnitude homology theory on the category of strict \(n\)-categories, for every \(n \geq 1\).

\paragraph{Conventions}
Throughout this section, `\(n\)-category' will mean `strict \(n\)-category'. Since we consider only small enriched categories in this paper, an \(n\)-category will be assumed to have a set of objects. We refer to the objects of an \(n\)-category as \emph{0-cells}, and its \(k\)-morphisms as \emph{\(k\)-cells}. The category of \(n\)-categories is denoted by \(n\cn{Cat}\).

\begin{defn}\label{def:nMH}
Let \(\Sigma: \cn{Set} \to \cn{Ab}\) be the free abelian group functor. The \textbf{magnitude nerve} functor on the category of \(n\)-categories is defined as follows.
\begin{itemize}
\item Set \(MB^1 = MB^\Sigma: \cn{Cat} \to [\Delta^\op, \cl{Ab}]\).
\item For each \(n > 1\), define
\[MB^n = \diag\left(MB^{MB^{n-1}}(-)\right) : n\cn{Cat} \to [\Delta^\op, \cl{Ab}].\]
\end{itemize}
The \textbf{magnitude chain complex} of an \(n\)-category \(\cl{X}\) is
\[MC^n_\bullet(\cl{X}) = C(MB^n(\cl{X})),\]
and the \textbf{magnitude homology} of \(\cl{X}\) is
\[MH^n_\bullet(\cl{X}) = H_\bullet(MC^n_\bullet(\cl{X})).\]
\end{defn}

We will illustrate the magnitude homology of \(n\)-categories by drawing out a certain pleasing feature---namely, its behaviour with respect to an operation analogous to the two-point suspension of topological spaces.

\begin{defn}
Given an \(n\)-category \(\cl{X}\), the \textbf{suspension} of \(\cl{X}\) is an \((n+1)\)-category \(\Gamma\cl{X}\) with two objects, \(A\) and \(B\); the \(n\)-category \(\Gamma\cl{X}(A,B)\) is equivalent to \(\cl{X}\), while \(\Gamma\cl{X}(B,A)\) is empty and the other two hom objects are given by the terminal \(n\)-category.
\end{defn}

\begin{figure}[h]
\adjustbox{scale=1.2, center}{%
\begin{tikzcd}[ampersand replacement=\&, row sep=10, column sep=25,
execute at end picture={
	\draw[gray] (0.25,0) node[left] {\(\cl{X}\)};
  }]
\& \cdot 
	\arrow[shorten <= -0.7em, shorten >= -0.7em]{dr} 
	\arrow[no head, shorten <= -0.7em, shorten >= -0.7em, bend right=40]{dd}
	\arrow[no head, dashed, shorten <= -0.7em, shorten >= -0.7em, bend left=30]{dd}
	\\
|[alias=dot]| A \bullet 
	\arrow[no head, shorten <= -0.7em, shorten >= -0.7em]{ur} 
	\arrow[no head, shorten <= -0.7em, shorten >= -0.7em]{dr} \& 
\&
|[alias=star]| \bullet B
\\
\& \cdot
	\arrow[shorten <= -0.7em, shorten >= -0.7em]{ur} 
\end{tikzcd}
}
\end{figure}

Below, we will see that magnitude homology behaves with respect to this form of suspension very much as unreduced singular homology behaves with respect to the two-point suspension of topological spaces.

The topological story goes as follows. Given a nonempty space \(Y\), the \emph{suspension} of \(Y\), denoted \(SY\), is the quotient of \(Y \times [0, 1]\) obtained by collapsing \(Y \times \{0\}\) to one point and collapsing \(Y \times \{1\}\) to another point. After decomposing \(SY\) as a union of two cones on \(Y\), a straightforward application of the Mayer--Vietoris sequence reveals that
\[H_k(SY) \cong \begin{cases} \bb{Z} & k=0 \\
H_{k-1}(Y) & k \geq 2 \end{cases}\]
while \(H_1(SY) \oplus \bb{Z} \cong H_0(Y)\).

Our theorem is formally very similar to the topological one:

\begin{thm}\label{thm:MH_suspension}
Let \(\cl{X}\) be a nonempty strict \(n\)-category. Then
\[MH_k^{n+1}(\Gamma\cl{X}) \cong \begin{cases} \bb{Z} & k=0 \\
MH_{k-1}^{n}(\cl{X}) & k \geq 2 \end{cases}\]
while
\[MH_1^{n+1}(\Gamma\cl{X}) \oplus \bb{Z} \cong MH_0^n(\cl{X}).\]
\end{thm}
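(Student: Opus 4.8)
The plan is to imitate the topological argument, in which the suspension $SY$ is cut into two cones and Mayer--Vietoris is applied. Here the two cone points will be the objects $A$ and $B$ of $\Gamma\cl{X}$, and the role of $Y$ is played by the hom-object $\Gamma\cl{X}(A,B) \cong \cl{X}$. The whole proof will rest on collapsing the \emph{outer} simplicial direction of the double magnitude nerve $MB^{MB^{n}}(\Gamma\cl{X})$ to a two-term complex, after which the iterated magnitude homology can be read off from a single long exact sequence. Throughout I would write $U = MB^{n}(\cl{1})$ for the magnitude nerve of the terminal $n$-category; since $MB^{n}$ is strong symmetric monoidal and $\cl{1}$ is the unit of $n\cn{Cat}$, we have $U \cong I_\pw$, so $H_\bullet(C(U)) \cong \bb{Z}$ is concentrated in degree $0$. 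I also write $\epsilon \colon MB^{n}(\cl{X}) \to U$ for the map induced by the terminal functor $\cl{X} \to \cl{1}$.

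First I would describe the outer structure of $MB^{MB^{n}}(\Gamma\cl{X})$. Because $\Gamma\cl{X}(B,A)$ is empty we have $MB^{n}(\Gamma\cl{X}(B,A)) = 0$, so in \Cref{def:mag_nerve} every summand indexed by a tuple $(c_0,\dots,c_p)$ that contains a step $B \to A$ vanishes; the surviving tuples are exactly those weakly increasing for the order $A < B$. Since $\Gamma\cl{X}(A,A)$ and $\Gamma\cl{X}(B,B)$ are both the unit $\cl{1}$, \Cref{lem:normalized} applies in the outer direction, and the normalized outer magnitude complex is supported in degrees $0$ and $1$: the only tuples with no repeated consecutive entry are $(A)$, $(B)$ and $(A,B)$. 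This yields a two-term complex of simplicial abelian groups
\[
MB^{n}(\cl{X}) \xrightarrow{\ \partial^{h}\ } U \oplus U,
\]
with $MB^{n}(\cl{X})$ in outer degree $1$ (using $\Gamma\cl{X}(A,B)\cong\cl{X}$) and $U\oplus U$ in outer degree $0$ (the two cone points). A short computation with the outer faces $\delta^{h}_0$ and $\delta^{h}_1$ — each of which discards an endpoint object through the terminal map and hence applies $\epsilon$ — identifies the differential as $\partial^{h}(m) = (-\epsilon(m),\,\epsilon(m))$, whose image is an antidiagonal copy of the image of $\epsilon$.

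Next I would pass to homology. By the Eilenberg--Zilber theorem (\Cref{thm:EZ_general}, in the form \eqref{MH2_EZ}), $MH^{n+1}_\bullet(\Gamma\cl{X})$ is the total homology of the double complex $C(MB^{MB^{n}}(\Gamma\cl{X}))$; since the normalized and unnormalized complexes of a simplicial object are naturally quasi-isomorphic and everything in sight is first-quadrant, I may normalize in the outer direction before totalizing. The resulting two-column double complex is exactly the mapping cone of the chain map $(-\epsilon,\epsilon)\colon \widetilde{MC}^{n}(\cl{X}) \to C(U)\oplus C(U)$, so there is a long exact sequence
\[
\cdots \to H_k\big(C(U)\big)^{\oplus 2} \to MH^{n+1}_k(\Gamma\cl{X}) \to MH^{n}_{k-1}(\cl{X}) \xrightarrow{(-\epsilon,\epsilon)_*} H_{k-1}\big(C(U)\big)^{\oplus 2} \to \cdots.
\]
Because $H_\bullet(C(U)) \cong \bb{Z}$ is concentrated in degree $0$, for every $k \geq 2$ both flanking terms vanish and the sequence gives the isomorphism $MH^{n+1}_k(\Gamma\cl{X}) \cong MH^{n}_{k-1}(\cl{X})$.

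The remaining content sits in low degrees, and this is where the hypothesis that $\cl{X}$ is nonempty enters, through the behaviour of $\epsilon_*\colon MH^{n}_0(\cl{X}) \to \bb{Z}$: choosing an object of $\cl{X}$ gives a functor $\cl{1}\to\cl{X}$ splitting $\cl{X}\to\cl{1}$, so $\epsilon_*$ is split surjective. The tail of the long exact sequence then reads
\[
0 \to MH^{n+1}_1(\Gamma\cl{X}) \to MH^{n}_0(\cl{X}) \xrightarrow{(-\epsilon,\epsilon)_*} \bb{Z}\oplus\bb{Z} \to MH^{n+1}_0(\Gamma\cl{X}) \to 0,
\]
and since the image of $(-\epsilon,\epsilon)_*$ is the antidiagonal copy of $\bb{Z}$, its cokernel is $\bb{Z}$, giving $MH^{n+1}_0(\Gamma\cl{X}) \cong \bb{Z}$, while its kernel is $\ker\epsilon_*$, giving $MH^{n+1}_1(\Gamma\cl{X}) \cong \ker\epsilon_*$; splitting $\epsilon_*$ then yields $MH^{n+1}_1(\Gamma\cl{X}) \oplus \bb{Z} \cong MH^{n}_0(\cl{X})$, as claimed. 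I expect the main obstacle to lie in the second step: verifying cleanly that the outer-normalized complex really does collapse to the two stated terms with the stated differential (the enriched face maps, the identifications $U \cong I_\pw$ and $MB^{n}(\emptyset)=0$, and the reading of $\delta^h_0,\delta^h_1$ as the augmentation $\epsilon$ all need care), together with the justification that one may normalize a single direction of the bicomplex before totalizing.
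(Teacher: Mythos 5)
Your proposal is correct, and its core is the same reduction the paper makes: the emptiness of \(\Gamma\cl{X}(B,A)\) together with \Cref{lem:normalized} (applicable because the endpoint hom-objects are the unit \(n\)-category) collapses the outer direction to a double complex concentrated in two columns, with \(C(U)\oplus C(U)\) in column 0 (where \(U = MB^{n}(\cl{1}) \cong I_\pw\)) and \(\widetilde{MC}^{n}(\cl{X})\) in column 1, the outer differential being \(\delta_1^0 - \delta_1^1 = (-\epsilon,\epsilon)\); and the normalize-one-direction-before-totalizing step you flag as a point of care is exactly the paper's \Cref{lem:norm_rows}, proved by the row-filtration spectral sequence, so both of your anticipated obstacles are unproblematic. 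Where you genuinely diverge is the endgame. The paper extracts the answer from the exact sequence \(0 \to H^h_1 H^v_{k-1}C \to H_k\Tot\: C \to H^h_0 H^v_k C \to 0\) for a two-column double complex, and for low degrees it invokes \Cref{prop:MH_0Pi_0} (and hence the inductive \Cref{lem:MB^n}) to identify \(MH_0^{n+1}(\Gamma\cl{X}) = \bb{Z}\cdot\mathrm{Con}(\Gamma\cl{X})\) and to compute that the induced map \(\bb{Z}\cdot\mathrm{Con}(\cl{X}) \to \bb{Z}^2\) sends every generator to \((-1,1)\). You instead recognize the total complex as the mapping cone of \((-\epsilon,\epsilon)\) and run its long exact sequence, disposing of the low degrees via the split surjectivity of \(\epsilon_*\colon MH_0^{n}(\cl{X}) \to \bb{Z}\) furnished by a choice of object of \(\cl{X}\). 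Your route buys a more self-contained argument---it bypasses \Cref{prop:MH_0Pi_0} and \Cref{lem:MB^n} entirely, and it isolates precisely where nonemptiness is needed---while the paper's route buys the general description \(MH_0^n(\cl{X}) = \bb{Z}\cdot\mathrm{Con}(\cl{X})\) for arbitrary \(n\)-categories, not just suspensions, which is of independent use.
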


Unlike the classical statement, which expresses a property of a single homology theory, ours expresses a relationship between members of a family of homology theories (namely the family \((MH_\bullet^n)_{n \geq 1}\)). Accordingly, the proof of \Cref{thm:MH_suspension}, which occupies the remainder of this section, is very different from the standard proof of the topological result. Rather than decomposing the \((n+1)\)-category \(\Gamma\cl{X}\) into smaller pieces, we compute its homology directly, as the total homology of the double complex \(C(MB^{MB^n}(\Gamma\cl{X}))\). 

We begin by describing the \(0^\th\) magnitude homology group of any \(n\)-category---not necessarily one obtained by suspension. The \(0^\th\) homology group captures the same information irrespective of the categorical dimension.

\begin{defn}
The set of \textbf{connected components} in an \(n\)-category \(\cl{X}\) is the set of equivalence classes in \(\ob(\cl{X})\) under the equivalence relation generated by the preorder `\(x \leq y \text{ if } \cl{X}(x,y)\) is nonempty'. We denote it by \(\mathrm{Con}(\cl{X})\).
\end{defn}

\begin{prop}\label{prop:MH_0Pi_0}
Let \(\cl{X}\) be an \(n\)-category. Then \(MH_0^n(\cl{X}) = \bb{Z} \cdot \mathrm{Con}(\cl{X})\).
\end{prop}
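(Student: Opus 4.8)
The plan is to reduce the computation of $MH_0^n(\cl{X})$ to the bottom-left corner of the double complex underlying the definition, where only $0$- and $1$-cells can intervene. For $n \geq 2$ the iterated magnitude nerve is $MB^n = \diag\, MB^{MB^{n-1}}$, so the bisimplicial Eilenberg--Zilber theorem (\Cref{thm:EZ_general}), exactly as in (\ref{MH2_EZ}) but applied at the top level of the iteration, gives a natural isomorphism $MH_0^n(\cl{X}) \cong H_0 \Tot C(B)$, where $B = MB^{MB^{n-1}}(\cl{X})$ is the double magnitude nerve. Since $B$ is a first-quadrant bisimplicial object, the $0^\th$ homology of its total complex is the cokernel of the map $B_{1,0} \oplus B_{0,1} \to B_{0,0}$ assembled from the differentials $\partial^h_{1,0}$ and $\partial^v_{0,1}$; the whole problem then comes down to identifying these three corner groups and the two maps out of them. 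The case $n = 1$ is separate but easier: there $MH_0^1(\cl{X})$ is just $\operatorname{coker}(\partial_1)$ for the ordinary magnitude nerve, which is the $0^\th$ homology of the classifying space as in \Cref{eg:categ_ordinary}, and the same description of $\mathrm{Con}$ drops out directly.

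Next I would unwind the definition of $B$ in low degrees, using the convention---matching the proof of \Cref{thm:MH_quotient}---that the horizontal index is the global magnitude nerve over objects while the vertical index is the local one coming from $MB^{n-1}$ applied to the hom-$(n-1)$-categories. The degree-$0$ part of any magnitude nerve is free on objects, and iterating this gives $(MB^{n-1}\cl{Y})_0 = \bb{Z}\cdot\ob(\cl{Y})$ for every $(n-1)$-category $\cl{Y}$. From this I read off: $B_{0,0} = \bb{Z}\cdot\ob(\cl{X})$; the row $B_{0,\bullet}$ is a direct sum of copies of the pointwise unit $I_\pw$ indexed by $\ob(\cl{X})$, hence constant with identity structure maps, so $\partial^v_{0,1} = 0$; and $B_{1,0} = \bigoplus_{x_0,x_1}\bb{Z}\cdot\ob(\cl{X}(x_0,x_1))$ is free on the set of $1$-morphisms of $\cl{X}$. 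The horizontal faces $\delta_0^h, \delta_1^h$ discard an endpoint via the terminal map in $(n-1)\cn{Cat}$, which at the level of $0$-cells is the augmentation sending each generator to the single object of the unit; so on a $1$-morphism $f\colon x_0 \to x_1$ they return its target $x_1$ and its source $x_0$ respectively, giving $\partial^h_{1,0}(f) = x_1 - x_0$.

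Assembling these, the cokernel is $\bb{Z}\cdot\ob(\cl{X})$ modulo the subgroup generated by $x_1 - x_0$ for every pair with $\cl{X}(x_0,x_1)\neq\emptyset$. Since $\cl{X}(x_0,x_1)$ is nonempty exactly when there is a $1$-morphism $x_0 \to x_1$, these are precisely the relations defining the equivalence relation in the definition of $\mathrm{Con}(\cl{X})$, so the quotient is $\bb{Z}\cdot\mathrm{Con}(\cl{X})$, as claimed. It is worth noting that no induction on $n$ is needed: the reduction to the total complex confines the computation to degrees $(0,0)$, $(1,0)$ and $(0,1)$, which depend only on the readily-described degree-$0$ behaviour of $MB^{n-1}$.

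I expect the only genuine obstacle to be the careful bookkeeping of the double magnitude nerve in these three degrees---in particular, verifying that the $p=0$ row really is constant (so that $\partial^v_{0,1}$ vanishes) and that the horizontal boundary collapses to ``target minus source.'' Everything else is the standard fact that $H_0$ of a first-quadrant total complex is a cokernel, together with the degree-$0$ description of the magnitude nerve.
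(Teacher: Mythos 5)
Your proof is correct, but it takes a genuinely different route from the paper's. The paper works directly with the diagonal nerve \(MB^n = \diag\, MB^{MB^{n-1}}\): it proves by induction on \(n\) (\Cref{lem:MB^n}) that \(MB_0^n(\cl{X})\) is free on the \(0\)-cells while \(MB_1^n(\cl{X})\) is free on the \emph{\(n\)-cells}, with \(\delta_0, \delta_1\) sending each \(n\)-cell to its codomain and domain \(0\)-cells; taking \(H_0\) then imposes the relation ``there is an \(n\)-cell from \(x\) to \(y\)'', and a final observation---an \((n-1)\)-category contains an \((n-1)\)-cell if and only if it contains a \(0\)-cell, i.e.\ is nonempty---converts this into the relation defining \(\mathrm{Con}(\cl{X})\). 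You instead invoke the bisimplicial Eilenberg--Zilber theorem (\Cref{thm:EZ_general}) to trade the diagonal for the total complex of the double nerve, and compute \(H_0\) as a cokernel concentrated in bidegrees \((0,0)\), \((1,0)\), \((0,1)\). There only \(1\)-cells appear, and these directly witness nonemptiness of the hom-\((n-1)\)-categories, so you avoid both the induction and the \(n\)-cell/nonemptiness step; your only iterated input is the essentially formal fact that \(MB_0^{m}(\cl{Y}) = \bb{Z}\cdot\ob(\cl{Y})\) for any \(m\)-category \(\cl{Y}\), which holds because the degree-zero part of any magnitude nerve is a sum of copies of the unit. Your identifications of the corner groups and differentials are all consistent with the paper's conventions (\(\delta_0\) discards the source object, so \(\partial^h_{1,0}(f) = x_1 - x_0\), and the vertical differential out of \(B_{0,1}\) is a difference of identities, hence zero). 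The trade-off: your argument is shorter and self-contained for this one statement, and the EZ reduction you use is squarely within the paper's toolkit (the same move opens the proofs of \Cref{thm:MH_quotient} and \Cref{thm:MH_suspension}); the paper's route costs an induction but yields a reusable description of the full \(1\)-truncation of the iterated magnitude nerve---in particular the fact, of independent interest, that degree-one diagonal simplices are exactly the top-dimensional cells of \(\cl{X}\).
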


The proof comes down to a lemma describing the first two groups in the iterated magnitude nerve and the face maps between them; \Cref{prop:MH_0Pi_0} follows on taking homology of the associated chain complex. To state the lemma, we note that for \(k >0\), every \(k\)-cell in an \(n\)-category \(\cl{X}\) is a \((k-1)\)-cell in exactly one hom-category, say \(\cl{X}(x,y)\). We call \(x\) the \textbf{domain \(0\)-cell} of \(k\), and \(y\) its \textbf{codomain \(0\)-cell}.

\begin{lem}\label{lem:MB^n}
For any \(n\)-category \(\cl{X}\) we have
\[MB_0^n(\cl{X}) = \bb{Z} \cdot \{ 0 \text{-cells in } \cl{X} \} \; \text{ and } \; MB_1^n(\cl{X}) = \bb{Z} \cdot \{n \text{-cells in } \cl{X}\}.\]
Each \(n\)-cell is mapped by \(\delta_0\) to its codomain \(0\)-cell and by \(\delta_1\) to its domain \(0\)-cell.
\end{lem}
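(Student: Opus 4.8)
The plan is to argue by induction on $n$, peeling off one layer of enrichment at each step. For the base case $n=1$ both identifications are immediate from \Cref{def:mag_nerve} with $\Sigma$ the free abelian group functor: $MB_0^\Sigma(\cl{X}) = \bigoplus_x \bb{Z}$ is free on the objects ($0$-cells), while $MB_1^\Sigma(\cl{X}) = \bigoplus_{x_0,x_1}\Sigma\cl{X}(x_0,x_1)$ is free on the morphisms ($1$-cells). By the defining formulas for the face maps, $\delta_0$ and $\delta_1$ are the two terminal-map collapses $\Sigma\cl{X}(x_0,x_1)\to\bb{Z}$ onto the summands indexed by $x_1$ and $x_0$ respectively, so a morphism $f\colon x_0\to x_1$ is sent by $\delta_0$ to its codomain and by $\delta_1$ to its domain.

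For the inductive step I would unwind the recursion $MB^n = \diag(MB^{MB^{n-1}})$ of \Cref{def:nMH}. Writing $B = MB^{MB^{n-1}}(\cl{X})$ for the double magnitude nerve, the diagonal gives $MB_0^n(\cl{X}) = B_{0,0}$ and $MB_1^n(\cl{X}) = B_{1,1}$. At outer simplicial degree $0$ the magnitude nerve formula reduces to the empty tensor product, so $(MB^{MB^{n-1}})_0(\cl{X}) = \bigoplus_x I_\pw$; evaluating at inner degree $0$ gives $B_{0,0} = \bigoplus_x \bb{Z}$, free on the $0$-cells. At outer degree $1$ there is a single hom-object for each pair of objects, so $(MB^{MB^{n-1}})_1(\cl{X}) = \bigoplus_{x_0,x_1} MB^{n-1}\cl{X}(x_0,x_1)$; evaluating at inner degree $1$ and applying the inductive hypothesis to each hom-$(n-1)$-category identifies this with $\bigoplus_{x_0,x_1}\bb{Z}\cdot\{(n-1)\text{-cells of }\cl{X}(x_0,x_1)\}$. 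Since an $(n-1)$-cell of $\cl{X}(x_0,x_1)$ is precisely an $n$-cell of $\cl{X}$ with domain $0$-cell $x_0$ and codomain $0$-cell $x_1$, summing over all pairs identifies $MB_1^n(\cl{X})$ with the free group on the $n$-cells of $\cl{X}$.

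It remains to track the two face maps $\delta_0,\delta_1\colon B_{1,1}\to B_{0,0}$, and this is where the real content lies. On the diagonal each $\delta_i$ factors as $\delta_i^h\circ\delta_i^v$, the composite of the inner (hom-category) and outer (magnitude-nerve) face maps, which commute because $B$ is a bifunctor. The inductive face-map hypothesis describes $\delta_i^v$ on the summand indexed by $(x_0,x_1)$: it sends an $n$-cell $f$ to its codomain (for $i=0$) or domain (for $i=1$) $0$-cell \emph{within} $\cl{X}(x_0,x_1)$ --- but a $0$-cell of $\cl{X}(x_0,x_1)$ is a $1$-cell of $\cl{X}$, not yet the $0$-cell we want. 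The outer face map then completes the collapse: at inner degree $0$, $\delta_0^h$ and $\delta_1^h$ are the terminal-map collapses (the augmentations sending every generator to $1$) onto the summands indexed by $x_1$ and $x_0$, whence $\delta_0(f)=x_1$ and $\delta_1(f)=x_0$. The step I expect to require the most care is exactly this interplay between the two directions: the inductive \emph{face-map} statement, not merely the group identification, is what guarantees that $\delta_i^v(f)$ is a single generator with coefficient $1$, so that the outer augmentation returns $x_1$ (resp.\ $x_0$) with coefficient $1$ rather than a cancelling or doubled combination. With that secured the two levels fit together cleanly and the induction closes.
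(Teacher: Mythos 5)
Your proposal is correct and takes essentially the same route as the paper's own proof: induction on \(n\), with \(B_{0,0}\) identified via \(MB^{MB^{n-1}}_{0\bullet}(\cl{X}) = \bigoplus_x I_\pw\), \(B_{1,1}\) identified by applying the inductive hypothesis to each hom-\((n-1)\)-category, and the diagonal face maps computed as composites \(\delta_i^h \circ \delta_i^v\) of the inner (hom-category) and outer (terminal-map collapse) face maps. The paper makes the same use of the inductive face-map statement, first sending each \(n\)-cell to its codomain (resp.\ domain) \(1\)-cell and then collapsing to the corresponding \(0\)-cell.
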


\begin{proof}
The statement holds for \(n=1\), by definition of the magnitude nerve of an ordinary category. Let \(n>1\) and suppose the statement holds for \(n-1\). We have
\[MB_{0\bullet}^{MB^{n-1}}(\cl{X}) = \bigoplus_{x \in \ob (\cl{X})} I_\pw,\]
where \(I_\pw\) is unit for the pointwise tensor product in \([\Delta^\op, \cn{Ab}]\): it has exactly one copy of \(\bb{Z}\) in every degree, and all its face and degeneracy maps are identities. Thus, for each \(k \geq 0\) we have \(MB_{0k}^{MB^{n-1}}(\cl{X}) = \bb{Z}\cdot \ob(\cl{X}) = \bb{Z} \cdot \{0\text{-cells in } \cl{X}\}\).

Meanwhile
\[MB_{1\bullet}^{MB^{n-1}}(\cl{X}) = \bigoplus_{x,y \in \ob (\cl{X})} MB_\bullet^{n-1}(\cl{X}(x,y))\]
and so
\begin{align}
MB_{10}^{MB^{n-1}}(\cl{X}) \nonumber &= \bigoplus_{x,y \in \ob(\cl{X})} MB_0^{n-1}(\cl{X}(x,y)) \nonumber\\
&= \bigoplus_{x,y \in \ob(\cl{X})} \bb{Z} \cdot \{0\text{-cells in }\cl{X}(x,y)\} \label{ind1}\\
&= \bb{Z}\cdot\{1\text{-cells in } \cl{X}\} \nonumber
\end{align}
where (\ref{ind1}) uses the inductive assumption. Also
\begin{align}
MB_{11}^{MB^{n-1}}(\cl{X}) \nonumber &= \bigoplus_{x,y \in \ob(\cl{X})} MB_1^{n-1}(\cl{X}(x,y)) \nonumber\\
&= \bigoplus_{x,y \in \ob(\cl{X})} \bb{Z} \cdot \{(n-1)\text{-cells in }\cl{X}(x,y)\} \label{ind2}\\
&= \bb{Z} \cdot \{n\text{-cells in } \cl{X}\} \nonumber
\end{align}
using the inductive assumption again in (\ref{ind2}).

So the southwest corner of \(MB^{MB^{n-1}}(\cl{X})\) has the form
\bdiag
\bb{Z} \cdot \{0\text{-cells in } \cl{X}\}
	\arrow[shift left=2]{d}
	\arrow[shift right=2, swap]{d}
& \bb{Z} \cdot \{n\text{-cells in } \cl{X}\}
	\arrow[shift left=1.7]{l}
	\arrow[shift right=1.7, swap]{l}
	\arrow[shift left=2]{d}{\delta^v_1} 
	\arrow[shift right=2, swap]{d}{\delta^v_0}
	\\
\bb{Z} \cdot \{0\text{-cells in } \cl{X}\}
& \bb{Z} \cdot \{1\text{-cells in } \cl{X}\}
	\arrow[shift left=1.7]{l}{\delta^h_1} 
	\arrow[shift right=1.7, swap]{l}{\delta^h_0}
\ediag
where \(\delta_0^v\) is induced by mapping each \(n\)-cell to {its codomain 1-cell}, and \(\delta_0^h\) by mapping each 1-cell to {its codomain 0-cell}; \(\delta_1^v\) takes an \(n\)-cell to its {domain 1-cell}, and \(\delta_1^h\) takes a 1-cell to its {domain 0-cell}. The diagonal face map \(\delta_0 = \delta_0^h \circ \delta_0^v\), then, maps each \(n\)-cell \(\alpha\) to {its codomain 0-cell}, while \(\delta_1\) maps \(\alpha\) to its domain 0-cell, as claimed.
\end{proof}

\begin{proof}[Proof of \Cref{prop:MH_0Pi_0}]
By \Cref{lem:MB^n}, taking zeroth homology in \(MC^n(\cl{X})\) has the effect of quotienting the generators of \(MB_0^n(\cl{X})\)---the set of \(0\)-cells in \(\cl{X}\)---by the equivalence relation generated by ``\(x \sim y\) if there exists an \(n\)-cell in \(\cl{X}\) whose domain 0-cell is \(x\) and whose codomain 0-cell is \(y\)''. That is, \(x \sim y\) if the \((n-1)\)-category \(\cl{X}(x,y)\) contains at least one \((n-1)\)-cell. But an \((n-1)\)-category contains an \((n-1)\)-cell if and only if it contains at least one \(0\)-cell; that is, if and only if it is nonempty.
\end{proof}

In the proof of \Cref{thm:MH_suspension}, it will simplify matters to replace a double complex by one in which the rows have been normalized. The next lemma explains how this can be done.

Any bisimplicial object \(B\) in an abelian category \(\ab{A}\) can be regarded (in either of two ways) as a simplicial object of \([\Delta^\op, \ab{A}]\), so we can take its normalized chain complex to obtain an object \(NB\) of \(\Ch([\Delta^\op, \ab{A}])\). Now applying the unnormalized chain complex functor in every degree yields an object of \(\Ch(\Ch(\ab{A}))\). Equivalently, this is the double complex obtained from \(B\) by taking normalized complexes in every row and unnormalized complexes in every column (or vice versa). We will denote this double complex by \(C^v N^h B\).

\begin{lem}\label{lem:norm_rows}
Let \(B\) be a bisimplicial object in an abelian category. Then 
\[H_\bullet\Tot \: CB \cong H_\bullet\Tot \: C^v N^h B.\]
\end{lem}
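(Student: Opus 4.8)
The plan is to realize $C^v N^h B$ as a quotient of $CB$ by an acyclic sub-double-complex, so that the quotient map induces the asserted isomorphism on total homology. Regarding $B$ as a simplicial object of $[\Delta^\op, \ab{A}]$ in the horizontal direction, I would write $D^h B$ for the bigraded object with $(D^h B)_{pq} = \sum_i \sigma_i^h(B_{p-1,q})$, the span of the horizontally degenerate elements in bidegree $(p,q)$. Since normalizing a simplicial object in the horizontal direction means quotienting by horizontal degeneracies, we have $(C^v N^h B)_{pq} = B_{pq}/(D^h B)_{pq}$; thus, once $D^h B$ is known to be a sub-double-complex of $CB$, we obtain an identification $C^v N^h B = CB/D^h B$ of double complexes.

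First I would check that $D^h B$ is closed under both differentials. Closure under the horizontal differential $\partial^h$ is the usual fact that the degenerate part of a simplicial object forms a subcomplex. Closure under the vertical differential $\partial^v$ follows from the bisimplicial identities: each horizontal degeneracy commutes with every vertical face map, so $\delta_j^v \sigma_i^h = \sigma_i^h \delta_j^v$ carries $(D^h B)_{pq}$ into $(D^h B)_{p,q-1}$. This verification is routine but must be recorded, as it is what makes the inclusion $D^h B \hookrightarrow CB$ a map of double complexes.

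The heart of the argument is to show that $\Tot(D^h B)$ is acyclic. For each fixed vertical degree $q$, the row $((D^h B)_{\bullet q}, \partial^h)$ is precisely the degenerate subcomplex $D(B_{\bullet q})$ of the simplicial object $B_{\bullet q}$, which is acyclic (Theorem 8.3.8 in \cite{WeibelIntroduction1994}, the fact recalled in \Cref{sec:MH}). Hence $D^h B$ is a first-quadrant double complex all of whose rows are exact. Feeding this into the spectral sequence of the double complex that takes horizontal homology first, one finds $E^1_{pq} = H_p^h((D^h B)_{\bullet q}) = 0$ for all $p,q$; because $D^h B$ is concentrated in the first quadrant the spectral sequence converges, forcing $H_\bullet \Tot(D^h B) = 0$. (Equivalently, one may invoke the acyclic assembly lemma.)

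Finally I would assemble the conclusion. The short exact sequence of double complexes $0 \to D^h B \to CB \to C^v N^h B \to 0$ yields, since $\Tot$ is exact, a short exact sequence of chain complexes $0 \to \Tot(D^h B) \to \Tot(CB) \to \Tot(C^v N^h B) \to 0$; its long exact homology sequence together with the vanishing of $H_\bullet \Tot(D^h B)$ shows that the quotient map induces the isomorphism $H_\bullet \Tot\, CB \cong H_\bullet \Tot\, C^v N^h B$. The only step requiring genuine care is the acyclicity of $\Tot(D^h B)$: the rest is bookkeeping with the bisimplicial identities, whereas here the standard acyclicity of degenerate subcomplexes does the real work and must be propagated from individual rows to the total complex via the first-quadrant spectral sequence.
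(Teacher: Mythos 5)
Your proof is correct and follows essentially the same route as the paper: both work with the quotient of \(CB\) by the horizontal degeneracies and both rest on the row-wise acyclicity of the degenerate part (Weibel's Theorem 8.3.8), propagated to total homology by a first-quadrant spectral-sequence argument. The only difference is bookkeeping at the final step: you run the argument through the short exact sequence \(0 \to D^h B \to CB \to C^v N^h B \to 0\) and the acyclicity of \(\Tot(D^h B)\), whereas the paper applies the spectral-sequence comparison theorem directly to the quotient map, which induces an isomorphism on the \(E^1\) page of the row-filtration spectral sequences.
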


\begin{proof}
Taking quotients by the horizontal degeneracies in \(B\) defines a map of double complexes \(CB \to C^v N^h B\). As the quotient map in each row is a quasi-isomorphism, the induced morphism of row-filtration spectral sequences is an isomorphism on page \(E^1 = H^h CB\), and thus induces an isomorphism on total homology. (See, e.g., Brown \cite{BrownCohomology1982}, VII, Proposition 2.6.)
\end{proof}

With this, we can prove the main theorem of this section, which is the final theorem of the paper. For convenience, we repeat the statement.

\begin{thmMH_suspension}
Let \(\cl{X}\) be a nonempty strict \(n\)-category. Then
\[MH_k^{n+1}(\Gamma\cl{X}) \cong \begin{cases} \bb{Z} & k=0 \\
MH_{k-1}^{n}(\cl{X}) & k \geq 2 \end{cases}\]
while
\[MH_1^{n+1}(\Gamma\cl{X}) \oplus \bb{Z} \cong MH_0^n(\cl{X}).\]
\end{thmMH_suspension}

\begin{proof}
By the Eilenberg--Zilber theorem there is a natural isomorphism
\[MH_\bullet^{n+1}(\Gamma \cl{X}) \cong H_\bullet \Tot \: C (MB^{MB^{n}}(\Gamma\cl{X}))\]
and by \Cref{lem:norm_rows} the right hand side is isomorphic to \(H_\bullet\Tot\:C\), where
\[C = C^v N^h (MB^{MB^{n}}(\Gamma\cl{X})) = C^v (MC^{MB^{n}}(\Gamma\cl{X})).\]
We begin by describing the double complex \(C\). As \(\Gamma\cl{X}\) has two objects,
\[MC_0^{MB^{n}}(\Gamma\cl{X}) = I_\pw \oplus I_\pw\]
where \(I_\pw\) is the pointwise tensor unit in \([\Delta^\op, \cn{Ab}]\). (Note that this is an equation of simplicial objects.) Meanwhile, applying \Cref{lem:normalized}, we see that
\begin{align*}
MC_1^{MB^{n}}(\Gamma\cl{X}) &= MB_\bullet^{n}(\Gamma\cl{X}(A,B)) \oplus MB_\bullet^{n}(\Gamma\cl{X}(B,A)) \\
&= MB_\bullet^{n}(\Gamma\cl{X}(A,B)) \oplus 0 \\
&\cong MB_\bullet^{n}(\cl{X})
\end{align*}
as \(\Gamma\cl{X}(B,A)\) is empty and \(\Gamma\cl{X}(A,B) \simeq \cl{X}\).

Since, for each \(k \geq 2\), every summand in the \(k^\th\) chain group involves a factor of \(MB_\bullet^{n}(\Gamma\cl{X}(B,A)) = 0\), we have \(MC_k^{MB^{n}}(\Gamma\cl{X}) = 0\) for \(k \geq 2\). Hence,
\[MC_k^{MB^{n}}(\Gamma\cl{X}) = \begin{cases}
I_\pw \oplus I_\pw & k=0 \\
MB_\bullet^{n}(\cl{X}) & k=1 \\
0 & k > 1
\end{cases}\]
and the only nonzero map in the differential is
\[\partial_{1\bullet} : MB_\bullet^{n}(\cl{X}) \to I_\pw \oplus I_\pw \cong MB_\bullet^{n}(\cl{1}) \oplus MB_\bullet^{n}(\cl{1}).\]
This map is given by \(\delta_1^0 - \delta_1^1\), where \(\delta_1^0\) is induced by the terminal map to the second summand, and \(\delta_1^1\) by the terminal map to the first. In summary, \(MC_\bullet^{MB^{n}}(\Gamma\cl{X}) \in \Ch([\Delta^\op, \cl{Ab}])\) looks like this---
\bdiag
\vdots & \vdots & \vdots \\ [-2em]
\bb{Z}^2 
	\arrow[shift left=1em]{d} \arrow[]{d} \arrow[shift right=1em]{d} & 
MB_2^{n}(\cl{X})
	\arrow{l}
	\arrow[shift left=1em]{d} \arrow[]{d} \arrow[shift right=1em]{d} & 
0
	\arrow{l}
	\arrow[shift left=1em]{d} \arrow[]{d} \arrow[shift right=1em]{d} &
\cdots
	\arrow{l}  \\
\bb{Z}^2 
	\arrow[shift left = 0.5em]{d} \arrow[shift right = 0.5em]{d}
	\arrow[shift left=0.5em]{u} \arrow[shift right=0.5em]{u} &
MB_1^{n}(\cl{X})
	\arrow{l} 
	\arrow[shift left=0.5em]{u} \arrow[shift right=0.5em]{u} 
	\arrow[shift left = 0.5em]{d} \arrow[shift right = 0.5em]{d} & 
0 
	\arrow{l} 
	\arrow[shift left = 0.5em]{d} \arrow[shift right = 0.5em]{d}
	\arrow[shift left=0.5em]{u} \arrow[shift right=0.5em]{u} &
\cdots
	\arrow{l}  \\
\bb{Z}^2 
	\arrow{u} &
MB_0^{n}(\cl{X})
	\arrow{l} \arrow{u} &
0
	 \arrow{l} \arrow{u} &
\cdots
	 \arrow{l}
\ediag
---and now taking the unnormalized complex in each column yields the double complex \(C\):
\bdiag
\vdots \arrow{d}{0} & \vdots \arrow{d} & \vdots \arrow{d} \\
\bb{Z}^2  \arrow{d}{\id} & \arrow{l}{\partial_{12}} \widetilde{MC}_2^{n}(\cl{X}) \arrow{d} & \arrow{l} 0 \arrow{d} & \arrow{l} \cdots  \\
\bb{Z}^2  \arrow{d}{0} & \arrow{l}{\partial_{11}} \widetilde{MC}_1^{n}(\cl{X}) \arrow{d} & \arrow{l} 0 \arrow{d} & \arrow{l} \cdots  \\
\bb{Z}^2 & \arrow{l}{\partial_{10}} \widetilde{MC}_0^{n}(\cl{X}) & \arrow{l} 0 & \arrow{l} \cdots
\ediag

As \(MH_k^{n+1}(\Gamma \cl{X}) \cong H_k\Tot\: C\) and \(C\) is concentrated in the first two columns, there is for each \(k\) an exact sequence
\[0 \to H^h_1 H^v_{k-1} C \to MH_k^{n+1}(\Gamma \cl{X}) \to H^h_0 H^v_k C \to 0.\]
(See, for instance, Exercise 5.2.1 of Weibel \cite{WeibelIntroduction1994}.) But \(H^v_k C_{0\bullet} = 0\) for \(k > 0\), so this gives isomorphisms
\begin{equation}\label{eq:HhHv}
MH_k^{n+1}(\Gamma \cl{X}) \cong H^h_1 H^v_{k-1} C
\end{equation}
whenever \(k > 0\), while \Cref{prop:MH_0Pi_0} tells us that
\[
MH_0^{n+1}(\Gamma \cl{X}) \cong \bb{Z}\cdot \mathrm{Con}(\Gamma\cl{X}) = \bb{Z}.
\]
Moreover, after taking vertical homology, the induced horizontal differential is zero in all rows other than row 0, so (\ref{eq:HhHv}) says
\begin{equation}\label{eq:H_k}
MH_k^{n+1}(\Gamma \cl{X}) \cong H^v_{k-1}C_{1\bullet} =\begin{cases}
 MH_{k-1}^n(\cl{X}) & k > 1 \\
 \ker(H^v \partial_{10}) & k = 1.\end{cases}
 \end{equation}
Finally, the homomorphism \(H^v \partial_{10}: MH_0^{n}(\cl{X}) = \bb{Z}\cdot\mathrm{Con}(\cl{X}) \to \bb{Z}^2\) maps all generators to the element \((-1,1)\), spanning a single copy of \(\bb{Z}\). From the second case of (\ref{eq:H_k}), then, we get
\[MH_0^n(\cl{X}) \cong \ker(H^v \partial_{10}) \oplus \im (H^v \partial_{10}) \cong MH_1^{n+1}(\Gamma \cl{X}) \oplus \bb{Z}. \qedhere\]
\end{proof}

\begin{eg}[Homology of `spheres']
Let \(\bb{S}^0\) denote a two-element set and for each \(n > 0\) let \(\bb{S}^n\) denote the \(n\)-category with two objects and two parallel \(k\)-cells in every dimension \(0 < k \leq n\); then for each \(n > 0\) the \(n\)-category \(\bb{S}^n\) is the suspension of \(\bb{S}^{n-1}\). For instance, \(\bb{S}^2\) looks like this:
\bdiag
& {} \arrow[Rightarrow, bend right=50, shorten >=1.5ex, shorten <=1.5ex]{dd} \arrow[Rightarrow, bend left=50, shorten >=1.5ex, shorten <=1.5ex]{dd} \\
\bullet \arrow[bend left=85]{rr} \arrow[swap, bend right=85]{rr} & & \bullet \\
& {}
\ediag
In \Cref{eg:S1} we saw that \(MH_k^\Sigma(\bb{S}^1) = \bb{Z}\) for \(k=0\) or \(1\), while in all higher degrees the magnitude homology vanishes. Applying \Cref{thm:MH_suspension} we find by induction that for every \(n \in \bb{N}\) we have
\[MH_k^n(\bb{S}^n) = \begin{cases} \bb{Z} &\text{if } k = 0, n \\ 0 &\text{otherwise.}\end{cases}\]
\end{eg}

%%%%%%%%%%%%%%%%%%%%%%%%%%%%%%%%%%%%%%%%%%%%%%%%%%%%%

\appendix

\gdef\thesection{\Alph{section}} % corrected redefinition of "\thesection"
\makeatletter
\renewcommand\@seccntformat[1]{\appendixname\ \csname the#1\endcsname.\hspace{0.5em}}
\makeatother

\section{Proof of Theorem 7.11}\label{sec:proofs}

It remains to prove \Cref{thm:MH_normed_gps}, describing the iterated magnitude homology of a group with a conjugation-invariant norm. The proof makes use of three lemmas. Lemmas \ref{lem:cols_exact} and \ref{lem:row_col_0} are standard facts of homological algebra, for which proofs can be found in Appendix C of \cite{Roff2022} (Lemmas C.3.1 and C.3.2).

\begin{lem}\label{lem:cols_exact}
Let \(C\) be a first quadrant double complex with the property that every column is exact in vertical degrees greater than 0. Then
\[H_k \Tot (C) = H^h_k C_{\bullet0} \text{ for all }k \geq 0. \tag*{\qed}\]
\end{lem}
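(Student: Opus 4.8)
The natural approach is to compute $H_\bullet\Tot(C)$ by means of the spectral sequence of the double complex $C$, filtered by columns. The plan is as follows. First I would form the first page by taking homology in the vertical direction, so that
\[E^1_{pq} = H^v_q(C_{p\bullet}).\]
The content of the hypothesis is precisely the assertion that $E^1_{pq} = 0$ for every $q > 0$. Hence the first page is concentrated on the single line $q = 0$, where it reads $E^1_{p0} = H^v_0(C_{p\bullet})$, and the differential $d^1$ on this line is the map induced on these vertical homologies by the horizontal differential of $C$.

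The second step is to observe that a first-quadrant spectral sequence supported on one horizontal line must collapse. Since $d^r$ has bidegree $(-r, r-1)$, for $r \geq 2$ it carries the line $q = 0$ into the line $q = r-1 > 0$ and receives contributions only from the line $q = 1-r < 0$; both vanish, so $d^r = 0$ and $E^2 = E^\infty$. Consequently $E^\infty_{p0} = E^2_{p0} = H^h_p(E^1_{\bullet 0})$, while every other entry of $E^\infty$ is zero. Because each total degree $k$ then contains at most the single nonzero group $E^\infty_{k0}$, the induced filtration on $H_k\Tot(C)$ has a unique nonzero subquotient and there is no extension problem to resolve, giving
\[H_k\Tot(C) \cong H^h_k(E^1_{\bullet 0}) \quad\text{for every } k \geq 0.\]

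What remains, and what I expect to be the only delicate point, is to identify the surviving line $E^1_{\bullet 0} = H^v_0(C_{\bullet\bullet})$ with the bottom row $C_{\bullet 0}$ of the original double complex, so that the right-hand side becomes $H^h_k C_{\bullet 0}$ as claimed; this is exactly where the first-quadrant structure and the convention for the bottom row must be used, and it is the step I would state with the most care. Everything upstream is the routine degeneration of a spectral sequence concentrated on a single line, so the real work is in pinning down convergence and the vanishing of the higher differentials cleanly; once the bottom-row identification is in place, the equality $H_k\Tot(C) = H^h_k C_{\bullet 0}$ follows for all $k \geq 0$.
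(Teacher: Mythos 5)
Your spectral-sequence argument is sound as far as it goes, and it is the natural route (the paper itself gives no proof, deferring to Lemma C.3.1 of the author's thesis, but the column-filtration spectral sequence is certainly the intended mechanism). The collapse argument, the vanishing of the higher differentials, and the absence of extension problems are all handled correctly, and they yield the isomorphism \(H_k\Tot(C) \cong H^h_k\left(H^v_0(C_{\bullet\bullet})\right)\) for all \(k\), where \(H^v_0(C_{\bullet\bullet})\) denotes the complex \(p \mapsto H^v_0(C_{p\bullet})\) with the induced horizontal differential.

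The step you defer to the end, however, is a genuine gap, and it is not one that "the first-quadrant structure and the convention for the bottom row" can close: in general \(E^1_{p0} = H^v_0(C_{p\bullet})\) is the \emph{cokernel} of the vertical differential \(\partial^v \colon C_{p1} \to C_{p0}\), not \(C_{p0}\) itself, and nothing in the stated hypotheses forces that differential to vanish. Indeed the identification is false in general, so no argument can complete the proof as written. Take \(C\) concentrated in the single column \(p = 0\), with \(C_{00} = C_{01} = \bb{Z}\), vertical differential the identity and all horizontal differentials zero: every column is exact in vertical degrees greater than \(0\), and \(\Tot(C)\) is acyclic, yet \(H^h_0 C_{\bullet 0} = \bb{Z}\). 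The lemma as printed therefore tacitly needs either the weaker conclusion \(H_k\Tot(C) \cong H^h_k(H^v_0(C_{\bullet\bullet}))\), or the additional hypothesis that the vertical differentials \(C_{\bullet 1} \to C_{\bullet 0}\) vanish, which is exactly what holds in the one place the lemma is used: in the length-grading-\(0\) part of the proof of \Cref{thm:MH_normed_gps}, every positive-degree vertical chain is degenerate, and \(\partial^v_1 = \delta_0 - \delta_1\) kills degenerate elements, so \(H^v_0(C_{p\bullet}) = C_{p0}\) there. To repair your write-up, either add that hypothesis to the statement or state the conclusion with \(H^v_0(C_{\bullet\bullet})\) in place of \(C_{\bullet 0}\); as it stands, the "delicate point" you postpone is precisely where both your proof and the literal statement break.
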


\begin{lem}\label{lem:row_col_0}
Let \(C\) be a first quadrant double complex such that \(C_{0p}=C_{p0} = 0\) for all \(p \in \bb{N}\). Then
\[H_k\Tot(C) \cong \begin{cases} 0 & k =0, 1 \\
H_1^h H_1^v C & k = 2.
\end{cases} \tag*{\qed}\]
\end{lem}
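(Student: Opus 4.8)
The plan is to compute $H_\bullet\Tot(C)$ directly from the total complex, letting the vanishing of the zeroth row and column collapse everything in low degrees. Recall $\Tot(C)_n = \bigoplus_{p+q=n} C_{pq}$, with total differential $\partial = \partial^h + (-1)^p\partial^v$; the sign convention plays no role below.

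First I would dispatch degrees $0$ and $1$. Since $C_{00} = 0$ we have $\Tot_0 = 0$, and since $C_{10} = C_{01} = 0$ we have $\Tot_1 = 0$; hence $H_0\Tot(C) = H_1\Tot(C) = 0$ with no further work.

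For degree $2$ the hypotheses force $\Tot_2 = C_{02}\oplus C_{11}\oplus C_{20} = C_{11}$ and $\Tot_3 = C_{03}\oplus C_{12}\oplus C_{21}\oplus C_{30} = C_{12}\oplus C_{21}$, the four corner terms all vanishing. As $\Tot_1 = 0$, every element of $C_{11}$ is a cycle, so $H_2\Tot(C) = C_{11}/B$, where $B = \im(\partial^h\colon C_{21}\to C_{11}) + \im(\partial^v\colon C_{12}\to C_{11})$ is the image of $\partial$ restricted to $\Tot_3$. The remaining task is to identify $C_{11}/B$ with $H_1^h H_1^v C$. Unwinding the iterated homology, and again using the boundary vanishing: because $C_{p0} = 0$, every element of $C_{p1}$ is a vertical $1$-cycle, so $(H_1^v C)_p = C_{p1}/\im(\partial^v\colon C_{p2}\to C_{p1})$, and in particular $(H_1^v C)_0 = 0$ because $C_{0\bullet} = 0$. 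Taking horizontal homology in degree $1$ then gives $H_1^h H_1^v C = (H_1^v C)_1 \big/ \im\bigl(\partial^h\colon (H_1^v C)_2 \to (H_1^v C)_1\bigr)$, and substituting the two descriptions recovers exactly $C_{11}/B$.

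I expect no genuine obstacle here---every group is forced by the vanishing hypotheses---so the only care required is keeping the horizontal/vertical indexing and the nested quotients straight when matching $H_2\Tot(C)$ against $H_1^h H_1^v C$. If one prefers a structural argument, the same conclusion drops out of the first-quadrant spectral sequence $E^1_{pq} = H^v_q(C_{p\bullet}) \Rightarrow H_{p+q}\Tot(C)$ obtained by filtering $C$ by columns: the vanishing of the zeroth row and column makes $E^1$ supported in $p,q\geq 1$, which kills $H_0$ and $H_1$ and leaves $H_2\Tot(C)\cong E^\infty_{11}$; since every $d^r$ into or out of the $(1,1)$ spot originates from or lands in a zero group, $E^\infty_{11} = E^2_{11} = H^h_1 H^v_1 C$.
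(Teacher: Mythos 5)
Your proof is correct. Note that the paper itself does not prove this lemma in the text: it is stated as a standard fact with the proof deferred to Appendix C of the author's thesis, and the way that reference is invoked elsewhere (for the convergence exact sequence of the column-filtration spectral sequence in the proof of Theorem~\ref{thm:MH_quotient}) indicates the official argument is essentially the spectral-sequence one you sketch at the end. Your primary argument is more elementary and fully self-contained: from $C_{0p}=C_{p0}=0$ you get $\Tot_0(C)=\Tot_1(C)=0$, hence $H_0=H_1=0$ and $H_2\Tot(C)=C_{11}\big/\bigl(\im(\partial^h\colon C_{21}\to C_{11})+\im(\partial^v\colon C_{12}\to C_{11})\bigr)$, and the third isomorphism theorem identifies this quotient with the nested quotient computing $H_1^hH_1^vC$; the only step you leave implicit is that the horizontal differential descends to vertical homology classes, which holds because the squares of a double complex (anti)commute. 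What each approach buys: yours requires nothing beyond the definition of the total complex, while the spectral-sequence route is the one that scales to the other convergence statements the paper relies on (such as the exact sequence $0 \to E_{01}^\infty \to H_1\Tot(C) \to E_{10}^\infty \to 0$ used for \Cref{thm:MH_quotient}). One wording correction to your alternative argument: it is not true that every $d^r$ into the $(1,1)$ spot has zero source, since $d^1\colon E^1_{21}\to E^1_{11}$ is generally nonzero; but that differential is already absorbed in forming $E^2_{11}$, and what you actually need --- and what does hold --- is that all $d^r$ with $r\geq 2$ into or out of $(1,1)$ vanish (the incoming $d^2$ originates at $E^2_{30}$, which is zero because the $q=0$ row of $E^1$ already vanishes), giving $E^\infty_{11}=E^2_{11}=H_1^hH_1^vC$ as you claim.
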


The third lemma is an elementary fact about bi-invariant metrics on groups.

\begin{lem}\label{lem:adj_split}
Let \(G\) be a group equipped with a bi-invariant metric \(d\). Elements \(g,h \in G\) are non-adjacent if and only if they can be factored as \(g = g_0g_1\) and \(h=h_0h_1\) in such a way that \(g_0 \neq h_0\), \(g_1 \neq h_1\), and
\[d(g,h) = d(g_0,h_0) + d(g_1,h_1).\]
\end{lem}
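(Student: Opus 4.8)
The plan is to translate both sides of the claimed equivalence into a single algebraic condition on the element \(w = h^{-1}g\), exploiting the dictionary between the bi-invariant metric and its associated norm. Recall that setting \(|x| = d(x,e)\) gives \(d(x,y) = |y^{-1}x|\), and that bi-invariance of \(d\) is precisely conjugation-invariance of \(|-|\), i.e.\ \(|yxy^{-1}| = |x|\) for all \(x,y \in G\). I will show that each of the two geometric conditions in the statement is equivalent to the following: \((\ast)\) the element \(h^{-1}g\) admits a factorization \(h^{-1}g = pq\) with \(p \neq e\), \(q \neq e\) and \(|h^{-1}g| = |p| + |q|\). (This is exactly the negation, applied to \(h^{-1}g\), of the indecomposability condition appearing in \Cref{thm:MH_normed_gps}.)

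First I would treat non-adjacency. Given a point \(z\) strictly between \(g\) and \(h\), I set \(p = h^{-1}z\) and \(q = z^{-1}g\); then \(pq = h^{-1}g\), the conditions \(z \neq h\) and \(z \neq g\) say exactly that \(p \neq e\) and \(q \neq e\), and the betweenness identity \(d(g,h) = d(g,z) + d(z,h)\) reads \(|h^{-1}g| = |q| + |p|\) once each distance is rewritten via the norm. Conversely, from a factorization as in \((\ast)\) the point \(z = gq^{-1}\) recovers \(q = z^{-1}g\) and \(p = h^{-1}z\) and is strictly between \(g\) and \(h\). So non-adjacency is equivalent to \((\ast)\), using only left-invariance.

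Next I would treat the factorization condition. Given \(g = g_0g_1\) and \(h = h_0h_1\) as in the statement, write \(u = h_0^{-1}g_0\) and \(v = h_1^{-1}g_1\) (so \(u, v \neq e\)) and compute \(h^{-1}g = h_1^{-1}h_0^{-1}g_0g_1 = (h_1^{-1}uh_1)\,v\). Setting \(p = h_1^{-1}uh_1\) and \(q = v\) gives a factorization \(h^{-1}g = pq\) of the required shape: conjugation-invariance yields \(|p| = |u| = d(g_0,h_0)\) and \(|q| = d(g_1,h_1)\), so the hypothesis \(d(g,h) = d(g_0,h_0) + d(g_1,h_1)\) becomes \(|h^{-1}g| = |p| + |q|\), which is \((\ast)\). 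Conversely, from a factorization \(h^{-1}g = pq\) in \((\ast)\) I would exhibit the concrete splitting \(g_0 = gq^{-1}\), \(g_1 = q\), \(h_0 = h\), \(h_1 = e\): one checks directly that \(g = g_0g_1\), that \(h = h_0h_1\), that \(g_1 = q \neq e = h_1\) and \(g_0 \neq h_0\) (the latter because \(g_0 = h_0\) would force \(p = e\)), and that \(d(g_0,h_0) + d(g_1,h_1) = |p| + |q| = |h^{-1}g| = d(g,h)\).

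The only real subtlety — and the single place conjugation-invariance is genuinely needed — is the cross term that appears when the inverse of \(h\) is split: a general factorization of \(g\) and \(h\) produces \(h^{-1}g = (h_1^{-1}uh_1)v\) rather than \(uv\), so the first factor is only a \emph{conjugate} of \(h_0^{-1}g_0\), and one must invoke \(|h_1^{-1}uh_1| = |u|\) to match the norms. The choice \(h_1 = e\) in the converse sidesteps this issue entirely, which is why I would use that particular splitting rather than a more symmetric one. Everything else is a direct computation in \(G\), so I anticipate no further obstacle.
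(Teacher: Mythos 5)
Your proof is correct, but it is organized differently from the paper's. The paper argues directly in both directions: given a strict midpoint \(k\), it produces the factorization \(g = (gk^{-1})\cdot k\), \(h = e\cdot h\) (using right-invariance); and given a factorization \(g = g_1g_0\), \(h = h_1h_0\), it exhibits the explicit midpoint \(h_1g_0\) and checks strict betweenness (using bi-invariance). You instead route both sides of the equivalence through your condition \((\ast)\), i.e.\ decomposability of the single element \(h^{-1}g\) in the normed sense, which costs four implications rather than two but buys something real: it isolates exactly where conjugation-invariance enters (only in ``factorization \(\Rightarrow (\ast)\)'', via the conjugate \(h_1^{-1}uh_1\)), whereas the other three implications use only left-invariance; and it makes explicit the bridge to indecomposability on which the application of the lemma depends --- in the proof of \Cref{thm:MH_normed_gps} the paper asserts that ``it is readily seen that \(g\) and \(h\) are adjacent if and only if \(gh^{-1}\) is indecomposable,'' and your Step 1 is essentially a proof of that assertion (with \(h^{-1}g\) in place of \(gh^{-1}\); the two are conjugate, hence interchangeable under a conjugation-invariant norm). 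So your argument could be viewed as proving the lemma and that auxiliary claim simultaneously, at the price of being slightly longer and less direct than the paper's midpoint construction.
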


\begin{proof}
First, suppose \(g\) and \(h\) are non-adjacent: there exists some \(k \neq g, h\) such that \(d(g,h) = d(g,k) + d(k,h)\). We can factor \(g\) as \((gk^{-1})k\) and \(h\) as \(e h\); since \(k \neq g\), we know \(gk^{-1} \neq e\). Multiplying \(gk^{-1}\) and \(e\) on the right by \(k\) and using the right-invariance of the metric, we find that
\[d(gk^{-1}, e) + d(k,h) = d(g,k) + d(k,h) = d(g,h).\]
Conversely, suppose we have a factorization \(g = g_1g_0\) and \(h=h_1h_0\) with the relevant property. I claim that \(h_1g_0\) lies strictly between \(g\) and \(h\). Indeed,
\begin{align*}
d(g, h_1g_0) + d(h_1g_0, h) &= d(g_1g_0, h_1g_0) + d(h_1g_0, h_1h_0) \\
&= d(g_1, h_1) + d(g_0, h_0) \\
&= d(g,h)
\end{align*}
where the second equality uses the bi-invariance of the metric. This says that \(h_1g_0\) lies \emph{between} \(g\) and \(h\). By assumption, \(g_0 \neq h_0\) and thus \(h_1g_0 \neq h_1h_0 = h\). Similarly, \(h_1g_0 \neq g\), and hence \(h_1g_0\) lies \emph{strictly} between \(g\) and \(h\).
\end{proof}

With these, we can prove the theorem.

\begin{thmMH_normed_gps}
Let \(\cl{G} = (G, |-|)\) be a group with a conjugation-invariant norm. In length grading \(0\), iterated magnitude homology recovers the group homology of \(G\):
\[MH_\bullet^0(\cl{G}) \cong H_\bullet(G).\]
In length gradings \(\ell > 0\) we have \(MH_0^\ell(\cl{G}) = 0 = MH_1^\ell(\cl{G})\), while
\[MH_2^\ell(\cl{G}) \cong \bb{Z} \cdot \left\{ \begin{array}{l} \text{conjugacy classes of indecomposable} \\ \text{elements of norm \(\ell\) in \(G\)} \end{array} \right\}.\] 
\end{thmMH_normed_gps}

    \begin{lrbox}{\MOnePairA}%
        \begin{tikzcd}%
            \begin{bmatrix} g_0 \\ h_0 \end{bmatrix}
        \end{tikzcd}%
    \end{lrbox}

    \begin{lrbox}{\MOnePairB}%
        \begin{tikzcd}%
            \begin{bmatrix} g_0g_1 \\ h_0h_1 \end{bmatrix}
        \end{tikzcd}%
    \end{lrbox}

    \begin{lrbox}{\MOnePairC}%
        \begin{tikzcd}%
            \begin{bmatrix} g_1 \\ h_1 \end{bmatrix}
        \end{tikzcd}%
    \end{lrbox}

    \begin{lrbox}{\MTwoPairs}%
        \begin{tikzcd}%
            \begin{bmatrix} g_0 & g_1 \\ h_0 & h_1 \end{bmatrix}
        \end{tikzcd}%
    \end{lrbox}
    
    \begin{lrbox}{\MOneDoubleA}%
        \begin{tikzcd}%
            \begin{bmatrix} g_0 \\ h_0 \\ k_0\end{bmatrix}
        \end{tikzcd}%
    \end{lrbox}

    \begin{lrbox}{\MTwoDoubles}%
        \begin{tikzcd}%
            \begin{bmatrix} g_0 & g_1 \\ h_0 & h_1 \\ k_0 & k_1 \end{bmatrix}
        \end{tikzcd}%
    \end{lrbox}

\begin{proof}
Let \(C^* = C(MB^{MB^\Sigma}(\cl{G}))\) be the double complex associated to the double magnitude nerve of \(\cl{G}\). We will prove the theorem using the isomorphism
\[MH_\bullet^\ell(\cl{G}) \cong H_\bullet \Tot(C^\ell)\]
given, in each length grading \(\ell\), by the bisimplicial Eilenberg--Zilber Theorem.

In length grading \(0\), for every \(p,q \geq 0\) the group \(C^0_{pq}\) is freely generated by \((q+1)\times p\) matrices whose total length is 0. Every such matrix has the form
\[\begin{bmatrix}
g_0 & \cdots & g_n \\
g_0 & \cdots & g_n \\
\vdots & & \vdots \\
g_0 & \cdots & g_n
\end{bmatrix}\]
for some \(g_0, \ldots, g_n \in \cl{G}\), and thus is degenerate in the vertical simplicial structure described in \Cref{prop:double_MB_normed}. This tells us that every column of \(C^0\) is exact in vertical degrees greater than 0, so, by \Cref{lem:cols_exact}, \(MH^0_\bullet(\cl{G})\) is the homology of row 0. Inspecting the horizontal simplicial structure described in \Cref{prop:double_MB_normed}, one sees that this exactly the ordinary group homology of \(G\).

Now consider length grading \(\ell > 0\). Since the total length of the empty matrix is 0, the leftmost column of the double complex \(C^\ell\) vanishes, and as the total length of any \(1 \times p\) matrix is 0, the bottom row vanishes too. The southwest corner of \(C^\ell\) therefore has the form
\bdiag
& \vdots \arrow{d} & \vdots \arrow{d}\\
0  & \bb{Z} \cdot \left\{ \usebox{\MOneDoubleA} \right\} \arrow{d}{\partial^v_{11}} \arrow[swap]{l}{\partial^h_{02}} & \bb{Z} \cdot \left\{ \usebox{\MTwoDoubles} \right\} \arrow{d}{\partial^v_{21}} \arrow[swap]{l}{\partial^h_{12}} & \cdots \arrow{l} \\
0  & \bb{Z} \cdot \left\{ \usebox{\MOnePairA} \right\} \arrow{d}{\partial^v_{10}} \arrow[swap]{l}{\partial^h_{01}} & \bb{Z} \cdot \left\{ \usebox{\MTwoPairs} \right\} \arrow{d}{\partial^v_{20}} \arrow[swap]{l}{\partial^h_{11}} & \cdots \arrow{l} \\
0 & 0 & 0  & 
\ediag
where the differentials are induced by the bisimplicial structure described in \Cref{prop:double_MB_normed}. According to \Cref{lem:row_col_0}, then, we have
\[MH^\ell_k(\cl{G}) = H_k\Tot(C^\ell) \cong \begin{cases} 0 & k =0, 1 \\
H_1^h H_1^v (C^\ell) & k = 2.
\end{cases}\]
It remains to find \(MH_2^\ell(\cl{G})\) by computing \(H_1^h H_1^v (C^\ell)\).

Column 1 coincides with the magnitude chain complex in grading \(\ell\) of the metric space \(\cl{G}\), so---invoking \Cref{prop:MH1_met}, or directly---we find that \(H_1^v (C_{1\bullet}^\ell)\) is freely generated by pairs \((g,h)\) of adjacent elements such that \(d(g,h) = \ell\). Given \emph{any} pair \((g,h)\) such that \(d(g,h) = \ell\), we have
\[|gh^{-1}| = d(gh^{-1}, e) = d(g,h) = \ell,\]
and it is readily seen that \(g\) and \(h\) are adjacent if and only if \(gh^{-1}\) is indecomposable. Thus, there is a linear map
\[\phi: H_1^v (C_{1\bullet}^\ell) \to \bb{Z} \cdot \{\text{indecomposable elements of norm \(\ell\) in \(\cl{G}\)}\}\]
specified on generators by \(\phi(g,h) = gh^{-1}\). The map \(\phi\) is surjective (since \(g = \phi(g,e)\)) but not necessarily injective; we will show that on taking horizontal homology it descends to an isomorphism 
\[H_1^h H_1^v (C^\ell) \cong  \bb{Z} \cdot \{\text{conjugacy classes of indecomposable elements of norm \(\ell\) in \(\cl{G}\)}\}.\]

For this, consider the map \(\partial_{11}^h\). We have 
\[\partial_{11}^h \left( \usebox{\MTwoPairs} \right) = 0\]
unless \(d(g_0g_1, h_0h_1) = d(g_0,h_0) + d(g_1,h_1)\), in which case
\begin{align}
\partial_{11}^h \left( \usebox{\MTwoPairs} \right) = \begin{cases}
\usebox{\MOnePairA} - \usebox{\MOnePairB} & \text{if } g_1 = h_1 \\
\usebox{\MOnePairC} - \usebox{\MOnePairB} & \text{if } g_0 = h_0 \\
- \usebox{\MOnePairB} & \text{otherwise.}
\end{cases} \label{cond:norm}
\end{align}
(Notice that since \(d(g_0,h_0) + d(g_1,h_1) = \ell > 0\), if \(g_1 = h_1\) then \(g_0 \neq h_0\), and if \(g_0 = h_0\) then \(g_1 \neq h_1\).) By \Cref{lem:adj_split} a generator of \(C^\ell_{11}\) satisfies the third condition in (\ref{cond:norm}), and thus is in the image of \(\partial_{11}^h\), if and only if it is a non-adjacent pair; consequently, no generator of \(H_1^v (C_{1\bullet}^\ell)\) lies in the image of \(H^v(\partial_{11}^h)\). Taking the quotient of \(H_1^v (C_{1\bullet}^\ell)\) by the image of \(H^v(\partial_{11}^h)\) therefore has the effect of quotienting the generating set by two equivalence relations coming from the first two conditions in (\ref{cond:norm}):
\begin{itemize}
\item \((g_0, h_0) \sim_1 (g_0', h_0')\) if there exists \(k \in G\) such that \(g_0k = g_0'\) and \(h_0k = h_0'\);
\item \((g_1, h_1) \sim_2 (g_1', h_1')\) if there exists \(k \in G\) such that \(kg_1 = g_1'\) and \(kh_1 = h_1'\).
\end{itemize}

Suppose \(\phi(g_0,h_0) = \phi(g_1,h_1)\); that is, \(g_0h_0^{-1} = g_1h_1^{-1}\). Let \(k = h_0^{-1} h_1\). Then \(g_0 k = g_0 h_0^{-1} h_1 = g_1 h_1^{-1} h_1 = g_1\) and \(h_0 k = h_0 h_0^{-1} h_1 = h_1\),
so we have \((g_0,h_0) \sim_1 (g_1,h_1)\). Conversely, if \((g_0,h_0) \sim_1 (g_1,h_1)\) then
\[g_1 h_1^{-1} = (g_0 k) (h_0 k)^{-1} = g_0 k k^{-1} h_0^{-1} = g_0 h_0^{-1}.\]
This tells us that \(\phi\) descends to an isomorphism
\[\widetilde{\phi}: H_1^v (C_{1\bullet}^\ell) /{\sim_1} \xto{\cong} \bb{Z} \cdot \{\text{indecomposable elements of norm \(\ell\) in \(\cl{G}\)}\}.\]
Meanwhile, \((g_0,h_0) \sim_2 (g_1,h_1)\) if and only if there exists \(k\) such that
\[g_1 h_1^{-1} = (k g_0) (k h_0)^{-1} = k (g_0 h_0^{-1}) k^{-1},\]
that is, if and only if \(\widetilde{\phi}(g_0,h_0)\) and \(\widetilde{\phi}(g_1,h_1)\) are conjugate elements of \(G\). Thus, upon taking the quotient by \(\sim_1\) and \(\sim_2\) we obtain the claimed isomorphism
\[H_1^h H_1^v (C^\ell) \cong  \bb{Z} \cdot \left\{ \begin{array}{l} \text{conjugacy classes of indecomposable} \\ \text{elements of norm \(\ell\) in \(G\)} \end{array} \right\}.\qedhere\]
\end{proof}

%%%%%%%%%%%%%%%%%%%%%%%%%%%%%%%%%%%%%%%%%%%%%%%%%%%%%

\bibliographystyle{abbrvnat}

\bibliography{ERoff_Iterated_MH}

\end{document}